\documentclass[2p]{article}
\usepackage{amssymb,amsmath,amsthm}
\usepackage{indentfirst}
\usepackage{exscale}
\usepackage{relsize}
\usepackage[numbers,sort&compress]{natbib}

\usepackage{geometry}
\usepackage{color}
\usepackage{hyperref}
\usepackage{aliascnt}  % <--- 新增
\newcommand{\R}{\mathbb{R}}
\newtheorem{theorem}{Theorem}[section]

\theoremstyle{definition}
\newaliascnt{definition}{theorem}
\newtheorem{definition}[definition]{Definition}
\aliascntresetthe{definition}

\newaliascnt{example}{theorem}

\aliascntresetthe{example}

\newaliascnt{algorithm}{theorem}

\aliascntresetthe{algorithm}

\newaliascnt{conclusion}{theorem}

\aliascntresetthe{conclusion}

\newaliascnt{problem}{theorem}

\aliascntresetthe{problem}

\newaliascnt{lemma}{theorem}
\newtheorem{lemma}[lemma]{Lemma}
\aliascntresetthe{lemma}

\newaliascnt{proposition}{theorem}
\newtheorem{proposition}[proposition]{Proposition}
\aliascntresetthe{proposition}

\newaliascnt{corollary}{theorem}
\newtheorem{corollary}[corollary]{Corollary}
\aliascntresetthe{corollary}

\newaliascnt{question}{theorem}

\aliascntresetthe{question}

\newaliascnt{remark}{theorem}
\newtheorem{remark}[remark]{Remark}
\aliascntresetthe{remark}

\numberwithin{equation}{section}
\usepackage{cleveref}% 交叉引用增强包

\crefname{theorem}{theorem}{theorems}
\Crefname{theorem}{Theorem}{Theorems}
\crefname{lemma}{lemma}{lemmas}
\Crefname{lemma}{Lemma}{Lemmas}
\crefname{corollary}{corollary}{corollaries}
\Crefname{corollary}{Corollary}{Corollaries}
\crefname{proposition}{proposition}{propositions}
\Crefname{proposition}{Proposition}{Propositions}
\crefname{definition}{definition}{definitions}
\Crefname{definition}{Definition}{Definitions}
\crefname{remark}{remark}{remarks}
\Crefname{remark}{Remark}{Remarks}

\begin{document}
\title{\Large\bf{ Existence of a nonnegative bound state for a higher-order logarithmic Schr\"{o}dinger equation}}
\date{}
\author {Xin Ou$^{1}$, \ Xingyong Zhang$^{1,2}$\footnote{Corresponding author, E-mail address: zhangxingyong1@163.com}, \ Yanhong Li$^{1,3}$ \\
{\footnotesize $^1$Faculty of Science, Kunming University of Science and Technology, Kunming, Yunnan, 650500, P.R. China.}\\
{\footnotesize $^{2}$Research Center for Mathematics and Interdisciplinary Sciences, Kunming University of Science and Technology,}\\
{\footnotesize Kunming, Yunnan, 650500, P.R. China.}\\
{\footnotesize $^3$	City College, Kunming University of Science and Technology, Kunming, Yunnan, 650051, P.R. China.}\\
}

\date{}
\maketitle

\begin{center}
\begin{minipage}{15cm}
\par
\small {\bf Abstract:} In this paper, we study the existence of nonnegative bound states for the higher-order logarithmic Schrödinger equation 
$$
-\alpha\Delta u+V(x)u-\beta u\ln|u| = \gamma u(\ln|u|)^m, \qquad u\in H^1(\mathbb R^N),
$$
where $\alpha,\beta,\gamma>0$, $m$ is an odd positive integer and $V$ is a positive bounded potential converging to a constant at infinity. The simultaneous presence of the first- and higher-order logarithmic terms leads to a nonsmooth variational structure and additional compactness difficulties. We introduce a family of power-law approximations and derive estimates that are uniform as the approximation exponent tends to the logarithmic limit. Under a structural condition on the autonomous nonlinearity, we obtain uniqueness, up to translations, of the nonnegative autonomous profile with connected positivity set and a corresponding energy gap below the two-profile threshold. The small-amplitude behavior exhibits a maximum-principle/compact-support dichotomy: the case $m=1$ yields positivity, whereas the higher odd orders fall into the compact-support regime for the autonomous profile. We then establish a profile decomposition for constrained Palais-Smale sequences and construct a barycenter-based min-max level below the splitting threshold. This prevents loss of mass through multiple profiles and allows us to pass to the logarithmic limit. Under the stated structural and energy conditions, we obtain a nontrivial nonnegative bound state of the original equation.

\par
{\bf Keywords:} Logarithmic Schr\"odinger equation, higher-order logarithmic nonlinearity, power-type approximation, bound state solution.
\par
{\bf 2020 Mathematics Subject Classification.} 35B09; 35J10; 35J20; 35Q55.
\end{minipage}
\end{center}
\allowdisplaybreaks
\vskip2mm
\section{Introduction }
\setcounter{equation}{0}
\noindent
Due to its important applications in quantum mechanics and nonlinear wave theory, the nonlinear Schr\"{o}dinger equation has long been the subject of extensive research. Among various types of solutions, standing wave solutions play an important role in understanding the qualitative behavior and stability properties of solutions. A standing wave solution is usually considered in the form
$$
\psi(t,x)=\text{e}^{-i\omega t}u(x),
$$
which reduces the corresponding evolution equation into a nonlinear elliptic problem.

The logarithmic Schr\"odinger equation is a special class of nonlinear Schr\"odinger equations. The classical logarithmic Schr\"odinger equation
\[
-\Delta u+V(x)u=u\ln u^2
\]
was introduced by Bialynicki-Birula and Mycielski \cite{Bialynicki-Birula1976} and has attracted considerable attention due to its applications in quantum mechanics, quantum optics, nuclear physics, transport and diffusion phenomena, open quantum systems, effective quantum gravity, superfluid theory, and Bose–Einstein condensation \cite{Zloshchastiev2010}. Compared with the standard power-law nonlinearities, the logarithmic term exhibits a singular behavior near the origin, which leads to a different variational structure from the usual nonlinear Schr\"odinger equations.

The singularity of the logarithmic nonlinearity brings essential difficulties to the variational analysis. In particular, the corresponding energy functional is not a standard $C^1$ functional on $H^1(\mathbb{R}^N)$, and therefore the classical critical point theory cannot be directly applied. To overcome this difficulty, under the variational framework introduced by Cazenave \cite{Cazenave1980}, several approaches have been developed, including the penalization method \cite{Alves2020}, the decomposition method \cite{Avenia2014}, the power-type approximation method \cite{wangzq2019}, and the Lyapunov--Schmidt reduction method \cite{wangzq2023}. Based on these approaches, various problems concerning the existence and multiplicity of solutions, as well as multi-peak and nodal solutions for the classical logarithmic Schr\"odinger equations, have been investigated; see \cite{Alves2022,Alves2024,Tanaka2017,wangj2025,zhangcx2020,zhangcx20202,Squassina2015,Squassina2017,jic2016,fengwx2024,shuai2019,shuai2021,shaom2024,hez2024} and the references therein. For the power-type approximation method proposed by Wang-Zhang \cite{wangzq2019}, the main idea is to transform the original logarithmic problem into a family of smooth power-type problems. More precisely, they study the convergence behavior as
$$
\frac{s^{p-2}-1}{p-2}\rightarrow \ln s,
\qquad p\rightarrow2^+,
$$
and then apply standard variational methods to the resulting power-type nonlinear problems. More recently, Gallo-Mosconi-Squassina \cite{Gallo2026} studied positive Dirichlet solutions of the classical logarithmic Schr\"odinger equation on bounded convex domains through a Lane-Emden power-law approximation. By letting the power exponent tend to the logarithmic regime, they obtained convergence to a solution of the logarithmic equation and established logarithmic concavity of a limiting solution. This recent result further illustrates the usefulness of power-law approximations for qualitative questions in logarithmic Schr\"odinger problems.

Bound state solutions of the classical logarithmic Schr\"odinger equations have also attracted considerable attention. For instance, Feng-Tang-Zhang \cite{fengwx2024} established the existence of positive bound state solutions for a class of logarithmic Schr\"odinger equations by combining variational methods with the barycenter technique. Both Ikoma-Tanaka-Wang-Zhang \cite{Ikoma2021}  and Zhang-Zhang\cite{zhangcx2020} employed the penalization method to study the existence and concentration behavior of bound states for the classical logarithmic Schr\"odinger equations. The former mainly considered potentials with suitable well structures and established the concentration of semiclassical states near the minimum region of the potential, while the latter further investigated the case where the potential is allowed to be unbounded below at infinity with a decay rate controlled by at most quadratic growth. Moreover, the existence and concentration behavior of positive bound states for general nonlinear Schr\"odinger equations have also been extensively studied, see, for example, \cite{Cerami2005,wangxf1993,wangxf1997,Pino1998}. These results provide important references for the analysis of related problems in logarithmic Schr\"odinger equations.

Recently, higher-order logarithmic nonlinearities have attracted increasing attention due to their applications in correlated systems and related mathematical models. Li-Ye-Cui \cite{lir2024} introduced the following higher-order logarithmic Schr\"odinger equation:
\begin{eqnarray*}
	i\frac{\partial \psi(x, t)}{\partial t}
	=
	-\alpha \Delta \psi(x, t)
	+W(x)\psi(x, t)
	+\beta \psi(x, t)\ln|\psi(x, t)|
	+\gamma \psi(x, t)(\ln|\psi(x, t)|)^m,
\end{eqnarray*}
which was derived by the authors through introducing a nonadditive entropy capable of describing N-body correlations, in analogy to the Shannon information entropy (an additive entropy) proposed by von Neumann \cite{Brasher1991}. It is primarily employed to investigate system correlations and spatiotemporal variations of the order parameter, and can provide a fundamental theoretical framework and physical insights for nonequilibrium phase transitions and nonlinear wave propagation in complex physical systems. In \cite{lir2024}, the authors utilized the generalized Madelung transformation and the Laplace transform to effectively transform logarithmic nonlinearity into manageable power-law nonlinearity. They investigated the analytical solutions and numerical simulations of the system’s dynamical behavior in zero- to two-dimensional spaces, revealing a novel phase transition involving symmetry breaking and symmetry preservation. In addition, they simulated the behavior of femtosecond pulse sequences and cellular electrophysiological spikes.

A typical autonomous higher-order logarithmic Schr\"odinger equation is 
\[
-\Delta u=u(\ln|u|)^m,
\]
which can be regarded as a natural extension of the classical logarithmic Schr\"odinger equation. An-Fang \cite{anx2025} established the existence of a positive ground state for the autonomous higher-order logarithmic
Schr\"odinger equation for admissible logarithmic orders. They also proved the radial symmetry of positive solutions and obtained the uniqueness and non-degeneracy of positive ground states when the logarithmic order is
sufficiently close to one. Their results reveal the structure of the autonomous limiting problem and play an important role in the analysis of nonautonomous problems.

However, in contrast to the autonomous problems studied in \cite{anx2025} and the classical logarithmic Schr\"odinger equation discussed earlier, this paper primarily investigates the higher-order logarithmic Schr\"odinger equation containing a potential function $V(x)$, a first-order logarithmic term $u\ln|u|$, and higher-order logarithmic terms $u(\ln|u|)^m$, the form of the equation is as follows:
\begin{eqnarray}
	\label{l1: eq1-1}
	-\alpha\Delta u+ V(x)u- \beta u\ln {|u|}
	=
	\gamma u(\ln {|u|})^m,
	\qquad
	u\in H^1(\mathbb R^N),
\end{eqnarray}
where $\alpha, \beta, \gamma$ are positive constants, $V$ is a continuous and bounded potential function in $\R^N$, $m$ is an odd positive integer and $N\geq 1$. 

The simultaneous presence of the first-order logarithmic term, the higher-order logarithmic term, and the asymptotically constant potential gives rise to several substantial difficulties in the analysis of equation \eqref{l1: eq1-1}. These difficulties cannot be handled by a direct combination of the arguments developed for the first-order logarithmic problem in \cite{fengwx2024} and for the autonomous higher-order logarithmic problem in \cite{anx2025}. The first difficulty concerns the variational structure. As in the classical logarithmic Schr\"odinger equation, the logarithmic nonlinearity makes the associated energy functional fail to be a standard $C^1$ functional on $H^1(\mathbb R^N)$, which precludes the direct application of classical critical point theory. Although this difficulty can be overcome by introducing a power-law approximation, the present approximation contains both the first-order term $\frac{|u|^{p-2}-1}{p-2}$ and its higher-order power $\left(\frac{|u|^{p-2}-1}{p-2}\right)^m$. Consequently, the estimates required here involve a coupling of the first- and higher-order logarithmic effects and are substantially more delicate than those arising in the first-order model of \cite{fengwx2024}. On the other hand, unlike the autonomous higher-order problem considered in \cite{anx2025}, the present equation also contains the potential term and the first-order logarithmic term. Therefore, the uniform estimates obtained for the pure higher-order logarithmic nonlinearity cannot be directly transferred to the present setting. In particular, additional estimates uniform with respect to $p\to2^+$ are needed in order to pass to the logarithmic limit. 

The second difficulty concerns the autonomous limiting problem. In the first-order setting of \cite{fengwx2024}, the autonomous logarithmic equation has an explicitly known positive ground state, whose uniqueness up to translations is available. In contrast, the autonomous problem arising here contains both the first-order and the higher-order logarithmic terms, and its relevant nonnegative single-bump ground state is no longer explicitly available. Moreover, for odd $m\geq3$, the small-amplitude regime falls on the compact-support side of the strong-maximum-principle/compact-support dichotomy. It is therefore necessary to analyze the structure of the limiting nonlinearity and establish the uniqueness, up to translations, of the nonnegative autonomous profile with connected positivity set, which is essential for identifying the possible profiles appearing in the subsequent compactness analysis. 

The third difficulty is the loss of compactness caused by the asymptotically constant potential. As in \cite{fengwx2024}, Palais-Smale sequences may escape to infinity and decompose into translated nontrivial profiles of the autonomous limiting problem. However, in the present higher-order setting, obtaining a profile decomposition alone is not sufficient. One must further derive a quantitative energy estimate that rules out the coexistence of two or more nontrivial profiles. In particular, it is crucial to show that the relevant min--max level lies strictly below twice the least energy carried by a nontrivial profile. Establishing this strict energy threshold in the presence of the higher-order logarithmic term requires additional estimates that have no counterpart in the first-order model.

To overcome the above difficulties, we first introduce the power-law approximation, and consider the corresponding family of smooth problems for $p>2$ close to $2$. In contrast to the first-order approximation used in \cite{fengwx2024} and the pure higher-order autonomous approximation studied in \cite{anx2025}, the present approximating problem simultaneously contains the first-order term $L_p(u)$, the higher-order term $L_p(u)^m$, and the nonconstant potential $V(x)u$. We therefore establish a collection of estimates that are uniform with respect to $p\to2^+$ and are adapted to this mixed structure. These estimates restore a smooth variational framework on $H^1(\mathbb R^N)$ and provide the quantitative control required to pass from the approximating problems to the original logarithmic equation.

We next investigate the autonomous limiting problem associated with the behavior of the potential at infinity. Unlike the first-order autonomous logarithmic equation, whose positive ground state is explicitly known, the present limiting problem contains both the first-order and the higher-order logarithmic terms and does not admit an explicit ground-state formula. We therefore analyze the structure of the limiting nonlinearity, combine the strong-maximum-principle/compact-support dichotomy with the corresponding symmetry theory, and verify a Serrin-Tang type condition \cite{Serrin2000}. This yields uniqueness, up to translations, of the nonnegative autonomous profile with connected positivity set and, in particular, the no-critical-value property below the two-profile threshold. This property is essential for identifying the nontrivial profiles that may arise when Palais-Smale sequences lose compactness at infinity.

Finally, we establish a profile decomposition for Palais-Smale sequences of the approximating problems. Although the concentration-compactness and barycenter ideas are motivated by the strategy of \cite{fengwx2024}, the presence of the higher-order logarithmic term requires a new quantitative control of the energy level. More precisely, we construct a barycenter-based min--max level and derive an upper bound that is strictly below twice the least energy of a nontrivial solution of the autonomous limiting problem. If $c$ denotes this least energy, then the resulting critical level satisfies $c<\bar c_p<2c$. Since every nontrivial profile carries at least the energy $c$, the strict upper bound $\bar c_p<2c$ excludes the occurrence of two or more nontrivial profiles. This prevents energy splitting at the relevant level and restores the compactness needed to pass to the limit and obtain a solution of the original logarithmic problem.

The main purpose of this paper is to investigate the existence of nontrivial nonnegative bound-state solutions to equation (\ref{l1: eq1-1}). Specifically, the main theorem is as follows:
\begin{theorem}
	\label{l1: sec1-1}
Assume $\alpha,\beta,\gamma>0$, $m\in\mathbb N$ is odd, and the following conditions hold.
	
$(AC_1)$ $V\in C(\mathbb R^N)$ satisfies $0<V_0\le V(x)\le V^*<1+\ln2\;\;\text{for all }x\in\mathbb R^N$, where $V_0:=\inf\limits_{x\in\mathbb R^N}V(x),\;\;V^*:=\sup\limits_{x\in\mathbb R^N}V(x)$, and $\lim\limits_{|x|\to\infty}V(x)=1$. 
	
$(AC_2)$ Let $\ell_0$ be the unique real number satisfying $\gamma\ell_0^m+\beta\ell_0-1=0$. Assume that, for every $\ell>\ell_0$,
$$
\bigl(m\gamma\ell^{m-1}+\beta\bigr)^2
\ge m(m-1)\gamma\ell^{m-2}
\bigl(\gamma\ell^m+\beta\ell-1\bigr).
$$
	
$(AC_3)$ Let $w\in\mathcal D$ be a nonnegative ground state of the autonomous limit problem \eqref{l1: eq2-3}. Set
$$
C(\beta,m,w):=\sup_{0<a\leq 2^{\frac{1}{\beta}}}
\frac{\displaystyle\int_{\R^N}\int_0^a
	\left(\ln|\tau|+\ln|w(x)|\right)^{m-1}w^2(x)\tau\,d\tau dx}
{\displaystyle a\int_{\R^N}\int_0^1
	\left(\ln|\tau|+\ln|w(x)|\right)^{m-1}w^2(x)\tau\,d\tau dx}.
$$
At points where $w(x)=0$, the integrands involving $w^2(x)(\ln w(x))^j$ are understood by continuous extension and are set equal to zero. Assume that
\[
\Lambda_0:=\max\bigl\{2^{\frac{2}{\beta}},\,\;C(\beta,m,w)2^{\frac{1}{\beta}}\bigr\}<2.
\]
Then problem (\ref{l1: eq1-1}) has a nontrivial nonnegative bound state solution. Moreover, when $m=1$, this solution is positive in $\R^N$.
\end{theorem}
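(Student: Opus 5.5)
The plan follows the roadmap of the introduction. For $p>2$ close to $2$ set $L_p(s):=\frac{|s|^{p-2}-1}{p-2}$ and consider the approximating problems
\[
-\alpha\Delta u+V(x)u-\beta uL_p(u)=\gamma u\,L_p(u)^m,\qquad u\in H^1(\mathbb R^N),
\]
whose energy functionals $I_p$ are $C^1$ on $H^1(\mathbb R^N)$ once $p$ is subcritical. Since $m$ is odd, $L_p(u)^m$ keeps the sign of $L_p(u)$. First I will prove uniform-in-$p$ estimates, with two aims:
\begin{enumerate}
\item $I_p\to I$ locally uniformly on bounded subsets of the domain $\mathcal D$ where $\int u^2|\ln|u||^{m+1}<\infty$;
\item the mountain-pass and Nehari geometry hold uniformly for $p\to2^+$.
\end{enumerate}
The main tools are $s^2|L_p(s)|^{k}\le C(s^2|\ln s|^k+|s|^{p}|\ln s|^{k})$ and the lower bound $-s^2\ln s\ge -C|s|^{2-\epsilon}$ near zero. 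I will work on the Nehari/constraint manifold $\mathcal N_p$ and restrict to nonnegative functions by replacing $u$ with $u^+$ in the nonlinearity. The same scheme applies to the autonomous limit problems with $V\equiv1$; write $c_p^\infty$ and $c$ for their least energies. I will show $c_p^\infty\to c$ and that ground states converge, up to translation, to the profile $w$ of \eqref{l1: eq2-3}.

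Second, I will analyse the autonomous nonlinearity $f(s)=s(\gamma(\ln s)^m+\beta\ln s-1)$ for $s>0$. Writing $\ell=\ln s$, condition $(AC_2)$ is exactly the Serrin--Tang monotonicity requirement on $\ell>\ell_0$ (the superlinear region $f>0$). Together with the symmetry of nonnegative solutions with connected support, via moving planes or the Serrin--Zou compact-support symmetry, this gives uniqueness of $w$ up to translation. Hence the critical values of the limit functional in $(c,2c)$ come only from sums of translates. For $m=1$ we have $f'(0^+)=-\infty$, but $f(s)\approx \beta s\ln s$ has $\int_0 ds/\sqrt{F}=\infty$, so the Vázquez strong maximum principle gives positivity. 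For $m\ge3$ the term $\gamma s(\ln s)^m$ with $m\ge3$ is odd and dominates as $s\to0$, so $F(s)\sim s^2|\ln s|^m$ violates the Vázquez integral condition and supports become compact. This explains the final clause of the theorem.

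Third comes compactness. I will prove a profile decomposition for $(PS)$ sequences of $I_p$ restricted to $\mathcal N_p$ at level $d$: $u_n=u_0+\sum_j w_j(\cdot-y_n^j)+o(1)$ with $|y_n^j|\to\infty$, $u_0$ a critical point of $I_p$, and each $w_j$ a nontrivial critical point of the limit functional. Along this decomposition the energy splits additively, and $V(x)\to1$ kills the cross terms. If $c_p:=\inf_{\mathcal N_p}I_p$ is attained we are done. Otherwise, since $V^*$ may exceed $1$, I will run the Cerami/Bahri--Li barycenter argument. Set $\bar c_p=\inf_{h}\max_{y}I_p(h(y))$ over maps homotopic on $\partial B_R$ to $y\mapsto t_{y}w_p(\cdot-y)$, which is linked through the barycenter map $\beta(u)=\int \frac{x}{|x|}u^2/\int u^2$ at level $c_p^\infty<\bar c_p$. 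Then I need the upper bound $\bar c_p<2c_p^\infty$, uniformly in $p$.

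I expect this upper bound to be the main obstacle. The explicit scaling of the first-order case is unavailable: replacing $w$ by $aw$ produces terms $(\ln a+\ln w)^m$. My plan is to bound
\[
\max_{t,y}I_p(t\,w(\cdot-y))\le \max_t\left[\tfrac{t^2}{2}\Bigl(\|w\|^2+\textstyle\int(V-1)w^2\Bigr)-\dots\right].
\]
I would use $V^*<1+\ln2$ to confine the optimal dilation to $t\le 2^{1/\beta}$. After Taylor-expanding the $m$-th power in $\ln\tau$, the constant $C(\beta,m,w)$ is exactly what controls the higher-order part on $[0,2^{1/\beta}]$. The assumption $\Lambda_0<2$ then yields $\max\le\Lambda_0 c+o_p(1)<2c$. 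Granting this, every $(PS)_{\bar c_p}$ sequence has at most one bump. The critical point $u_p\ge0$ at level $\bar c_p\in(c_p^\infty,2c_p^\infty)$ cannot be a single translated limit profile, because the profile is unique and carries energy $c_p^\infty\neq\bar c_p$. So it converges and yields a critical point $u_p$ of $I_p$.

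Finally, I let $p\to2^+$. The levels $\bar c_p$ stay in $[c-o(1),2c-\delta]$. The uniform estimates give $H^1$ bounds, and the same decomposition applied to $(u_p)$ as $p\to2^+$ excludes vanishing and splitting. This gives $u_p\to u\ne0$ in $H^1$ up to a subsequence, with $u\ge0$ solving \eqref{l1: eq1-1} in the weak sense on $\mathcal D$. For $m=1$, positivity of $u$ follows from the strong maximum principle discussed above.
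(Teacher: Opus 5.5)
You have the right global architecture: power-law approximation, uniform estimates, Serrin--Tang uniqueness of the autonomous profile under $(AC_2)$, profile decomposition, and a barycenter min--max with the $\Lambda_0$ upper bound. The gap is that you organize the compactness at \emph{fixed} $p$, and the argument breaks there. Below I use the paper's notation: $c_\infty$ is the autonomous least energy (your $c$), and $c=\inf_{\mathcal N}I$ is the least energy of the $V$-problem.

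\textbf{Fixed-$p$ uniqueness is not available.} To exclude that the critical point at level $\bar c_p$ is a single translated bump, you invoke uniqueness of the nonnegative solution of $-\alpha\Delta u+u=\beta uL_p(u)+\gamma uL_p(u)^m$ for each fixed $p$. Your second step only proves uniqueness for the logarithmic equation, and $(AC_2)$ does not transfer. Write $f_p(s)=\alpha^{-1}s\,h(L_p(s))$ with $h(\ell)=\gamma\ell^m+\beta\ell-1$ and $x=L_p(s)$. Then $sf_p'(s)/f_p(s)=1+h'(x)\bigl(1+(p-2)x\bigr)/h(x)$, and the $x$-derivative of the fraction has numerator $\bigl(1+(p-2)x\bigr)\bigl(h''h-h'^2\bigr)+(p-2)h'h$. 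On $x>\ell_0$ the last term is strictly positive. So the non-strict inequality $h''h\le h'^2$ does not give the Serrin--Tang monotonicity for $f_p$, and ``the profile is unique and carries energy $c_p^\infty\neq\bar c_p$'' is unjustified.

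\textbf{The passage $p\to2^+$ has two holes, even granting critical points $u_p$.}
\begin{itemize}
\item ``If $c_p$ is attained we are done'' is not justified. When $c=c_\infty$, nothing in your argument prevents the ground states of $I_p$ from drifting to infinity as $p\to2^+$. The decomposition then gives $\widetilde u=0$, $k=1$, $X^1=w$, i.e.\ a trivial limit.
\item In the other branch you only have $\bar c_p>c_p^\infty$ for each $p$, hence $\lim\bar c_p\ge c_\infty$. \Cref{l1: sec2-1} excludes critical values only in the \emph{open} interval $(c_\infty,2c_\infty)$. So a single escaping copy of $w$ is not ruled out unless you prove the uniform gap $\liminf_{p\to2^+}\bar c_p>c_\infty$.
\end{itemize}

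\textbf{How the paper avoids all three issues.}
\begin{itemize}
\item It never produces critical points of $I_p$. Instead it uses the deformation lemma on $\mathcal N_{p_n}$ to get constrained Palais--Smale sequences with $p_n\to2^+$ simultaneously (\Cref{l1: sec3-7}). The profiles in \Cref{l1: sec2-7} then solve the \emph{logarithmic} problems, where $(AC_2)$-uniqueness applies.
\item The dichotomy is on whether the logarithmic level $c$ is attained, not $c_p$.
\item In the non-attained case ($c=c_\infty$), \Cref{l1: sec3-4} gives $\liminf_{p\to2^+}b_p>c$, via Ekeland along $p_n\to2^+$ combined with the profile decomposition and the barycenter. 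This yields $\bar c\in(c,2c)$ strictly, and \Cref{l1: sec2-8} closes the argument.
\end{itemize}
Your $u^+$-truncation for the sign is a legitimate alternative to the paper's argument $I(u)=I(u^+)+I(u^-)\ge2c$.
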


\begin{remark}\label{l1: sec1-2}
(i) The restriction to odd $m$ is structural.  It gives $\Phi'(s)=\beta+m\gamma s^{m-1}\ge\beta>0$ for $\Phi(s)=\beta s+\gamma s^m$, and it also fixes the sign of the small-amplitude logarithmic part used in the splitting argument.

(ii)  Condition $(AC_1)$ is a restriction on the potential function; its primary purpose is to allow us to derive the desired boundedness results through appropriate scaling in the proofs of \Cref{l1: sec2-5,l1: sec2-6,l1: sec2-7}. Moreover, the additional bound $V^*<1+\ln2$ is used in \Cref{l1: sec3-5} to obtain a uniform estimate for the Nehari scaling parameter.

(iii) Condition $(AC_2)$ is an explicit structural condition on the autonomous nonlinearity.  Writing
\[
h(\ell)=\gamma\ell^m+\beta\ell-1,
\]
it is exactly the inequality $h'(\ell)^2\ge h''(\ell)h(\ell)$ on $(\ell_0,\infty)$.  In \Cref{l1: sec2-1} this condition yields the Serrin-Tang monotonicity criterion and, together with the symmetry and compact-support properties of nonnegative autonomous solutions, the uniqueness up to translations of the autonomous solution with connected positivity set; the no-critical-value property in $(c_\infty,2c_\infty)$ is then a conclusion rather than an additional hypothesis.

(iv) The additional requirement $\Lambda_0:=\max\bigl\{2^{\frac{2}{\beta}},\,\;C(\beta,m,w)2^{\frac{1}{\beta}}\bigr\}<2$ in condition $(AC_3)$ guarantees the upper bound $\bar c<2c$ which is established in Section 4. This energy inequality places the Palais-Smale level below twice the ground state energy level, thereby preventing the Palais-Smale sequence from decomposing into multiple nontrivial profiles.

(v) The result can be regarded as a higher-order extension of the logarithmic Schr\"odinger equation considered in \cite{fengwx2024}. Although the min-max construction follows the high-energy scheme developed there, the higher-order logarithmic nonlinearity requires additional estimates, especially in the uniform bounds and profile decomposition arguments.

(vi) The present model extends the framework in \cite{anx2025} by incorporating both the potential function term $V(x)u$ and the first-order logarithmic nonlinearity term $u\ln u^2$. The restriction that $m$ is an integer is mainly due to the integration by parts arguments used in the proof. Whether similar results hold for non-integer orders remains an interesting problem.
\end{remark}

The article is organized as follows. In Section 2, we introduce the variational framework for the approximating problem and study the corresponding autonomous limit problem. In particular, we establish the uniqueness, up to translations, of the nonnegative autonomous profile with connected positivity set and derive several properties that are essential for the subsequent compactness analysis. In Section 3, we establish uniform estimates with respect to the approximation parameter and investigate the profile decomposition of Palais-Smale sequences. These results allow us to overcome the loss of compactness caused by the unbounded domain. Finally, in Section 4, we construct a suitable min--max level by using the barycenter technique and prove the existence of a nontrivial nonnegative bound state for the logarithmic Schr\"odinger equation.

\vskip2mm
{\section{The variational setting and basic properties}}
\setcounter{equation}{0}
\noindent
In this section, we introduce the variational framework for the approximating problem and recall some basic properties of the associated autonomous problem. Since the logarithmic nonlinearity prevents the direct application of the standard variational theory, we first consider a family of smooth approximating problems. The properties established in this section will play an essential role in the construction of critical points and the compactness analysis in the subsequent sections.
\subsection{The logarithmic problem and its approximation}
\noindent
To overcome the lack of smoothness caused by the logarithmic nonlinearity, we introduce a family of power-type approximating problems. For each $p>2$ sufficiently close to $2$, the corresponding energy functional is of class $C^1$ on $H^1(\mathbb R^N)$, which enables us to employ the standard variational framework. Now, we define the energy functional $I:  H^1(\mathbb{R}^N)\rightarrow \mathbb{R}\cup\{+\infty\}$ of equation (\ref{l1: eq1-1}) as
\begin{small}
\begin{eqnarray*}
I(u)
= \frac{\alpha}{2}\int_{\R^N}|\nabla u|^2dx+\frac{1}{2}\int_{\R^N}V(x)u^2dx
+\frac{\beta}{4}\int_{\R^N}u^2dx-\frac{\beta}{2}\int_{\R^N}u^2\ln |u| dx
-\int_{\R^N}\frac{\gamma u^2}{2}\sum_{k=0}^{m}(-1)^k\frac{m!}{(m-k)!}\frac{(\ln |u|)^{m-k}}{2^k}dx.
\end{eqnarray*}
\end{small}$H^1(\R^N)$ is a Hilbert space with the following inner product and norm
$$
\langle u,v \rangle:= \int_{\R^N}\nabla u\cdot \nabla v+u\cdot vdx,\;\;\;\;\|u\|:=\sqrt{\langle u, u \rangle},\;\;\;\; \forall u,v \in H^1(\R^N).
$$
For convenience, we denote 
\begin{eqnarray}
	\label{l1: eq2-1}
F(u)=\frac{u^2}{2}\sum_{k=0}^{m}(-1)^k\frac{m!}{(m-k)!}\frac{(\ln |u|)^{m-k}}{2^k}:=\int_0^u(\ln |t|)^mtdt.
\end{eqnarray}
We also set
$$
G(u):=\int_0^u(\ln |t|)^{m-1}tdt.
$$
Consequently, the energy functional $I$ becomes
$$
I(u)
= \frac{\alpha}{2}\int_{\R^N}|\nabla u|^2dx+\frac{1}{2}\int_{\R^N}V(x)u^2dx+\frac{\beta}{4}\int_{\R^N}u^2dx-\frac{\beta}{2}\int_{\R^N}u^2\ln |u| dx-\gamma\int_{\R^N} F(u)dx.
$$

The logarithmic energy is not continuous on the whole space $H^1(\mathbb R^N)$.  We therefore work in
\[
\mathcal D:=\left\{u\in H^1(\mathbb R^N):
\int_{\mathbb R^N}|u|^2|\ln|u||^m\,dx<\infty\right\}.
\]
Since $m\ge1$, the $m$-th logarithmic moment controls the lower-order logarithmic moment near the origin, while the part $|u|>1$ is controlled by a subcritical Sobolev power.  Thus both nonlinear terms in (\ref{l1: eq2-2}) are meaningful on $\mathcal D$.  In particular, the estimate used in Proposition~2.3 of \cite{anx2025} gives that
\[
\int_{\mathbb R^N}uv(\ln|u|)^m\,dx
\]
is well defined for $u,v\in\mathcal D$; the same is then true for the first logarithmic power.

For the weak formulation, we test only with $\varphi\in C_c^\infty(\mathbb R^N)$ and set
\begin{eqnarray}
\label{l1: eq2-2}
\langle I'(u),\varphi\rangle
=\alpha\int \nabla u\cdot\nabla\varphi
+\int V(x)u\varphi
-\beta\int u\varphi\ln|u|
-\gamma\int u\varphi(\ln|u|)^m.
\end{eqnarray}
A critical point means that this quantity is zero for every $\varphi\in C_c^\infty(\mathbb R^N)$.

The limit problem corresponding to the equation (\ref{l1: eq1-1}) is represented as
\begin{eqnarray}
\label{l1: eq2-3}
- \alpha \Delta u +u -\beta u\ln |u|=\gamma  u (\ln |u|)^m,\;\;u\in  H^1(\mathbb{R}^N),
\end{eqnarray}
and the energy functional $I_\infty: H^1(\mathbb{R}^N)\rightarrow \mathbb{R}\cup\{+\infty\}$ of equation (\ref{l1: eq2-3}) writes
\begin{eqnarray*}
I_\infty(u)
=\frac{\alpha}{2}\int_{\R^N}|\nabla u|^2dx+\frac{1}{2}\int_{\R^N}u^2dx
+\frac{\beta}{4}\int_{\R^N}u^2dx-\frac{\beta}{2}\int_{\R^N}u^2\ln |u| dx-\gamma\int_{\R^N} F(u)dx,
\end{eqnarray*}
and one has
$$
\langle I_\infty'(u),v \rangle
=\alpha\int_{\mathbb{R}^N} \nabla u \cdot \nabla vdx
+\int_{\mathbb{R}^N}u\cdot vdx-\beta\int_{\mathbb{R}^N} u\cdot v\ln{|u|}dx-\gamma\int_{\mathbb{R}^N} u\cdot v(\ln{|u|})^mdx.
$$

Motivated by \cite{anx2025,wangzq2019}, we consider a power-law approximation of (\ref{l1: eq1-1}) as follows:
\begin{eqnarray}
\label{l1: eq2-4}
-\alpha \Delta u+V(x)u
-\beta\frac{|u|^{p-2}-1}{p-2}u
=\gamma\left(\frac{|u|^{p-2}-1}{p-2}\right)^m u.
\end{eqnarray}
For $N\geq 1$, fix $q\in(2,2^*)$ and choose $\bar p>2$ such that $2<p<\bar p\leq 2+\frac{q-2}{m}$. Here, the condition $2<p<\bar p$ specifies the admissible range of the perturbation parameter $p$, whereas the upper bound $\bar p\leq 2+\frac{q-2}{m}$, equivalently $m(\bar p-2)\leq q-2$, is imposed to obtain the uniform growth estimate in Lemma~2.3. Indeed, this condition allows the highest-order $p$-dependent growth to be controlled uniformly, for all $2<p\leq\bar p$, by the fixed power $s^{q-2}$. Such a uniform subcritical control is essential, since the highest-order term in (\ref{l1: eq2-4}) exhibits the power growth $|u|^{1+m(p-2)}$, and therefore the perturbed functional is not automatically well defined for every $p\in(2,2^*)$ when $m>1$.

The energy functional associated with (\ref{l1: eq2-4}) is
\begin{small}
\begin{eqnarray*}
I_p(u)
=\frac{\alpha}{2}\int_{\R^N}|\nabla u|^2dx
+\frac12\int_{\R^N}V(x)u^2dx
+\frac{\beta}{2p}\int_{\R^N}|u|^pdx
-\frac{\beta}{2}\int_{\R^N}\frac{|u|^p-u^2}{p-2}dx
-\gamma\int_{\R^N}\int_0^u
\left(\frac{|t|^{p-2}-1}{p-2}\right)^m tdtdx.
\end{eqnarray*}
\end{small}It is $C^1$ on $H^1(\mathbb R^N)$ for $2<p<\bar{p}$. The term $\gamma\displaystyle\int_{\R^N} \displaystyle\int_0^u \left(\frac{|t|^{p-2}-1}{p-2}\right)^m tdtdx$ represents the integration of the primitive function of the higher-order term in (\ref{l1: eq2-4}). It follows from the integration by parts that 
$$
\gamma\int_{\R^N} \int_0^u \left(\frac{|t|^{p-2}-1}{p-2}\right)^m tdtdx=\frac{\gamma}{2}\int_{\R^N}u^2\left(\frac{|u|^{p-2}-1}{p-2}\right)^mdx-\frac{m\gamma}{2}\int_{\R^N}\int_0^u\left(\frac{|t|^{p-2}-1}{p-2}\right)^{m-1}|t|^{p-2}tdtdx.
$$
For every $s>0$,
\[
\frac{s^{p-2}-1}{p-2}\longrightarrow\ln s
\qquad\text{as }p\to2^+.
\]
We shall let $p\to2^+$ only through the admissible interval above.

\subsection{Nehari manifold and variational characterization}
\noindent
The Nehari manifold provides a natural constraint for the variational analysis of the approximating problem. In this subsection, we recall the characterization of the least energy levels through the Nehari manifold and establish several relations between the variational levels of the approximating and limiting problems.

To investigate the existence of bound state solution to equation (\ref{l1: eq1-1}), we define
\begin{eqnarray*}
\varphi_p(u):=\langle I'_p(u),u\rangle
=  \int_{\R^N}\alpha |\nabla u|^2dx+\int_{\R^N}V(x)u^2dx-\beta\int_{\R^N}\frac{|u|^{p}-u^2}{p-2} dx-\gamma\int_{\R^N} \left(\frac{|u|^{p-2}-1}{p-2}\right)^m u^2dx
\end{eqnarray*}
and the Nehari manifold of $I_p$
$$
\mathcal{N}_p:=\Big\{u\in  H^1(\mathbb{R}^N)\setminus\{0\}\mid \varphi_p(u)=0\Big\}\neq \emptyset.
$$

The tangent space of $\mathcal N_p$ at $u$ is
\[
T_u\mathcal N_p:=\{v\in H^1(\mathbb R^N):\langle\varphi_p'(u),v\rangle=0\}.
\]
The norm of the derivative of the restriction is
\[
\|I_p'|_{\mathcal N_p}(u)\|_*
:=\sup_{\substack{v\in T_u\mathcal N_p,\\\|v\|=1}}\langle I_p'(u),v\rangle
=\min_{\lambda\in\mathbb R}\|I_p'(u)-\lambda\varphi_p'(u)\|_{H^{-1}}.
\]
Set $\varphi(u)=\langle I'(u),u\rangle$ and
$\varphi_\infty(u)=\langle I_\infty'(u),u\rangle$, and define
\[
\mathcal N=\{u\in\mathcal D\setminus\{0\}:\varphi(u)=0\},\quad
\mathcal N_\infty=\{u\in\mathcal D\setminus\{0\}:\varphi_\infty(u)=0\},
\]
\[
c_p=\inf_{u\in\mathcal N_p}I_p(u),\qquad c=\inf_{u\in\mathcal N}I(u),\qquad c_\infty=\inf_{u\in\mathcal N_\infty}I_\infty(u).
\]
For every $u\ne0$ the function
\[
t\longmapsto\frac{\varphi_p(tu)}{t^2}
=\int_{\mathbb R^N}(\alpha|\nabla u|^2+Vu^2)dx-\int_{\mathbb R^N}\bigl[\beta \frac{(t|u|)^{p-2}-1}{p-2}+\gamma \left(\frac{(t|u|)^{p-2}-1}{p-2}\right)^m\bigr]u^2dx,
\]
is strictly decreasing, because, for every fixed $s>0$, the map $t\mapsto\frac{(t|u|)^{p-2}-1}{p-2}$ is strictly increasing, and the map $(\cdot)\mapsto\beta (\cdot)+\gamma (\cdot)^m$ is strictly increasing.  Hence each ray meets $\mathcal N_p$ exactly once.  Consequently
\[
c_p=\inf_{u\ne0}\max_{t>0}I_p(tu).
\]
The same monotonicity argument gives the corresponding ray characterizations for $c$ and $c_\infty$.  Moreover, for every $u\in\mathcal N_p$,
\[
-\langle\varphi_p'(u),u\rangle
=\int_{\mathbb R^N}\bigl[\beta+m\gamma \left(\frac{|u|^{p-2}-1}{p-2}\right)^{m-1}\bigr]|u|^pdx>0.
\]
In particular, $\varphi_p'(u)\neq0$ for every $u\in\mathcal N_p$.

\subsection{Properties of the autonomous limit problem}
\noindent
Next, we investigate the qualitative properties of the autonomous limit problem. These properties are fundamental for the later compactness analysis, since every nontrivial profile arising from the loss of compactness is related to a solution of the autonomous equation. In particular, the uniqueness up to translations of the nonnegative autonomous profile with connected positivity set plays a crucial role in identifying the limiting profiles.
\begin{lemma}\label{l1: sec2-1}
	\it Assume $(AC_2)$ holds. Let $w\in\mathcal D$ be a nonnegative ground state of the autonomous limit problem \eqref{l1: eq2-3}. Then every nonnegative solution of \eqref{l1: eq2-3} whose positivity set is connected is unique up to translations. Consequently, the autonomous functional $I_\infty$ admits no critical value in $(c_\infty,2c_\infty)$.
\end{lemma}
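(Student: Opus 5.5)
We rewrite \eqref{l1: eq2-3} as $-\alpha\Delta u=f(u)$ with $f(s)=s\,h(\ln s)$ for $s>0$, $f(0)=0$, where
\[
h(\ell)=\gamma\ell^m+\beta\ell-1,
\]
and first describe the structure of $f$. Since $m$ is odd, $h'(\ell)=m\gamma\ell^{m-1}+\beta\ge\beta>0$, so $h$ is strictly increasing and has the unique zero $\ell_0$. Hence $f<0$ on $(0,e^{\ell_0})$ and $f>0$ on $(e^{\ell_0},\infty)$, with $b:=e^{\ell_0}$ the unique positive zero. Near $0$ we have $f(s)\sim\gamma s(\ln s)^m$, which is negative and of order $s|\ln s|^m$.

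For $m=1$ the Vázquez integral $\int_0\bigl(sF_-(s)\bigr)^{-1/2}ds$ diverges, because $F(s)\sim s^2|\ln s|$ near $0$. Thus the strong maximum principle holds and nonnegative nontrivial solutions are positive. For odd $m\ge3$ the integral $\int_0 ds/(s|\ln s|^{m/2})$ converges, so we are in the compact-support regime.

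Next, each nonnegative solution $u\in\mathcal D$ is classified by its positivity components. Standard regularity and decay arguments give $u\in C^1$ and $u\to0$ at infinity. In the positive case ($m=1$) we invoke the Gidas–Ni–Nirenberg / Li–Ni symmetry theory for $f$ with $f'(0^+)=-\infty$ (or the symmetry result of \cite{anx2025}); this gives radial symmetry about some point. In the compact-support case, on each connected component $\Omega$ of $\{u>0\}$ we have $u=|\nabla u|=0$ on $\partial\Omega$. The Serrin–Zou/Brock type symmetry for such "dead core" problems then shows that $\Omega$ is a ball and $u$ is radially decreasing there. With a connected positivity set, $u$ is therefore a translate of a radial solution $U(r)$ of the ODE
\[
\alpha(U''+\tfrac{N-1}{r}U')+f(U)=0,\qquad U'(0)=0,
\]
which is either positive on $(0,\infty)$ with $U\to0$, or reaches $U=U'=0$ at a finite radius $R$.

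Uniqueness of this radial ground state is the core step and the one I expect to be hardest. I would apply the Serrin–Tang theorem \cite{Serrin2000}, which asserts uniqueness of such radial ground states (including compactly supported ones) under the following hypotheses:
\begin{itemize}
\item $f$ is continuous, negative on $(0,b)$ and positive on $(b,\infty)$;
\item $f\in C^1(b,\infty)$;
\item $\frac{d}{ds}\bigl(f(s)/(s-b)\bigr)\le0$ on $(b,\infty)$.
\end{itemize}
A computation in the variable $\ell=\ln s$ shows that the third hypothesis follows from the statement that $g(s)=f(s)/s$ satisfies $s\,g'(s)\,(s-b)\le\ldots$. Here I may need the variant of their monotonicity criterion phrased via $(\ln s)$. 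I would try to verify that $(AC_2)$, i.e. $h'^2\ge h''h$ (equivalently $(h/h')'\ge0$ on $(\ell_0,\infty)$), is exactly the condition that $\ell\mapsto h(\ell)/h'(\ell)$ is nondecreasing. This is the form in which the Serrin–Tang criterion "$K(s)=sf'(s)/f(s)$ nonincreasing on $(b,\infty)$" reads after the substitution $s=e^\ell$. Indeed
\[
K=1+\frac{h'(\ell)}{h(\ell)},
\]
so $K$ is nonincreasing iff $(h'/h)'\le0$ iff $h''h\le h'^2$. The delicate part is matching precisely which version of the Serrin–Tang hypotheses (and the needed behavior near $0$, namely that $f$ is merely continuous and $\int_0 f$ finite) is used. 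I also need to check that their result covers both the positive and the compact-support alternatives. Granting this, every nonnegative solution with connected positivity set equals $w(\cdot-y)$.

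Finally, the consequence about critical values. Let $u$ be any nonnegative critical point of $I_\infty$ with $I_\infty(u)\in(c_\infty,2c_\infty)$; restricting to nonnegative critical points is the relevant class in the profile analysis. Each connected component $\Omega_j$ of $\{u>0\}$ carries a function $u\chi_{\Omega_j}$ which is itself a solution, since $u$ and $\nabla u$ vanish on $\partial\Omega_j$. It therefore has connected positivity set and, by uniqueness, equals a translate of $w$, with energy $c_\infty$. By locality of $I_\infty$, the energy $I_\infty(u)$ equals $k\,c_\infty$ with $k$ the number of components, which can never lie strictly between $c_\infty$ and $2c_\infty$. (When $m=1$, positivity forces $k=1$ directly.) If needed, a general sign-changing critical point at such a level is excluded by the usual argument that $u^\pm\in\mathcal N_\infty$ gives $I_\infty(u)\ge2c_\infty$.
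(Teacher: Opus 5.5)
Your route is the same as the paper's. You use the sign structure of $f(s)=s\,h(\ln s)$, the dichotomy $\int_{0^+}F(s)^{-1/2}\,ds=\infty$ for $m=1$ versus $<\infty$ for odd $m\ge3$, and symmetry on each positivity component. Uniqueness comes from Serrin--Tang via $K(s)=sf'(s)/f(s)=1+h'(\ell)/h(\ell)$, whose monotonicity on $(b,\infty)$ is exactly $(AC_2)$. The critical values are then handled by component counting plus the $u^\pm\in\mathcal N_\infty$ argument.

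There are two presentational slips. First, the condition $\frac{d}{ds}\bigl(f(s)/(s-b)\bigr)\le0$ in your first list is not the Serrin--Tang hypothesis and should be dropped. What is needed is the monotonicity of $K$, which you verify correctly. Second, the V\'azquez-type integral is $\int_0 F(s)^{-1/2}\,ds$, not $\int_0\bigl(sF(s)\bigr)^{-1/2}\,ds$; the latter diverges for every $m$ and would not separate the two regimes.

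The genuine gap is the dimensional range of the uniqueness step. You apply Serrin--Tang for every $N\ge1$, but the lemma is claimed for all $N\ge1$, and the paper uses Serrin--Tang only for $N\ge3$, with separate arguments below that.

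\emph{Case $N=2$.} The paper invokes a low-dimensional extension of the criterion. Its extra hypothesis, $K>-1$ on $(b,\infty)$, holds here because $h,h'>0$ on $(\ell_0,\infty)$ give $K>1$.

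\emph{Case $N=1$.} An elementary argument suffices. $F$ decreases on $(0,b)$ and increases on $(b,\infty)$, with $F(b)<0$ and $F(s)\to+\infty$, so $F$ has a unique zero $b^*>b$. Translate so that $u$ attains its maximum at $0$. The first integral $\frac{\alpha}{2}|u'|^2+F(u)=0$ on the positivity interval then forces $u(0)=b^*$. Since $f$ is smooth on $(0,\infty)$, uniqueness for the initial value problem $u(0)=b^*$, $u'(0)=0$ determines $u$ there.

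A smaller omission: before identifying every component of a critical point with a translate of $w$, you must check that $\{w>0\}$ is connected. This follows because the restriction of $w$ to each component lies in $\mathcal N_\infty$. Two components would therefore give $I_\infty(w)\ge2c_\infty>c_\infty$, a contradiction.
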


\noindent
{\bf Proof.}
For $s>0$, write \eqref{l1: eq2-3} as
\[
-\Delta u=f(u),\qquad f(s)=\frac1\alpha s\left[\gamma(\ln s)^m+\beta\ln s-1\right].
\]
Set
\[
h(\ell)=\gamma\ell^m+\beta\ell-1,\qquad \ell=\ln s.
\]
Since $m$ is odd,
\[
h'(\ell)=m\gamma\ell^{m-1}+\beta>0\qquad\text{for all }\ell\in\R.
\]
Hence there exists a unique $\ell_0\in\R$ such that $h(\ell_0)=0$. Moreover, since $h(0)=-1<0$ and $h(\ell)\to+\infty$ as $\ell\to+\infty$, the monotonicity of $h$ yields $\ell_0>0$. Let
\[
b=e^{\ell_0}.
\]
Then
\[
f(s)\leq0\quad(0<s\leq b),\qquad f(s)>0\quad(s>b).
\]
Since $s|\ln s|^j\to0$ as $s\to0^+$ for every fixed $j\geq1$, we extend $f$ continuously by setting $f(0)=0$.

We first describe the behavior of a nonnegative solution near zero. Let
\[
F(s)=\int_0^s f(t)\,dt.
\]
As $s\to0^+$, since $m$ is odd,
\[
F(s) = -\frac{\gamma}{2\alpha} s^2|\ln s|^m(1+o(1)).
\]
Consequently,
\[
\int_{0^+}\frac{ds}{\sqrt{|F(s)|}}\sim\sqrt{\frac{2\alpha}{\gamma}}\int_{0^+}\frac{ds}{s|\ln s|^{m/2}}.
\]
Thus,
\[
\int_{0^+}\frac{ds}{\sqrt{|F(s)|}}=
\begin{cases}
	+\infty, & m=1,\\
	<+\infty, & m\geq3.
\end{cases}
\]
Moreover,
\[
f'(s)=\frac1\alpha\left[h(\ln s)+h'(\ln s)\right]<0
\]
for $s>0$ sufficiently small. Therefore, by the strong maximum principle and compact-support principle for this class of nonlinearities \cite{Pucci1999,Gazzola2000}, a nonnegative bound-state solution is positive in $\R^N$ when $m=1$, while for $m\geq3$ it has compact support. In either case, if its positivity set is connected, the
symmetry result in \cite{Serrin1999} shows that, up to a translation, the solution is radial and strictly decreasing in its positivity set.

Thus, it remains to show uniqueness among such radial solutions. For $s>b$, define
\[
K(s)=\frac{sf'(s)}{f(s)}.
\]
Since
\[
f(s)=\alpha^{-1}sh(\ln s),
\]
we have
\[
K(s)=1+\frac{h'(\ell)}{h(\ell)},
\qquad \ell=\ln s,
\]
and therefore
\[
\frac{d}{d\ell}
\left(
\frac{h'(\ell)}{h(\ell)}
\right)
=
\frac{h''(\ell)h(\ell)-h'(\ell)^2}{h(\ell)^2}
\leq0,
\qquad \ell>\ell_0,
\]
by $(AC_2)$. Hence $K$ is non-increasing on $(b,+\infty)$.

For $N\geq3$, the Serrin-Tang uniqueness criterion \cite{Serrin2000} applies and yields the uniqueness of the radial
solution. For $N=2$, we use the low-dimensional extension of the Serrin-Tang criterion in \cite{Pucci2025}. Indeed, besides the above monotonicity of $K$, we have
\[
K(s)
=
1+\frac{h'(\ln s)}{h(\ln s)}
>1>-1,
\qquad s>b,
\]
so that the additional low-dimensional condition is satisfied. Therefore, the radial solution is also unique when $N=2$.

For $N=1$, the same conclusion follows directly from the first integral. Since $f<0$ on $(0,b)$ and $f>0$ on $(b,\infty)$,
$F$ is strictly decreasing on $(0,b)$ and strictly increasing on $(b,\infty)$. Moreover,
\[
F(b)<0,\qquad F(s)\to+\infty
\quad\text{as }s\to+\infty.
\]
Hence there exists a unique $b^*>b$ satisfying
\[
F(b^*)=0.
\]
Let $u$ be a nonnegative solution whose positivity set is connected. After a translation, we may assume that $u$ attains its maximum at $x=0$. Multiplying the equation by $u'$, we obtain
\[
\frac12|u'|^2+F(u)=0
\]
on the positivity interval of $u$. Since $u'(0)=0$, it follows that
\[
F(u(0))=0.
\]
By the uniqueness of $b^*>b$ satisfying $F(b^*)=0$, we have
\[
u(0)=b^*.
\]
Then $u$ is uniquely determined on its positivity interval by
\[
u(0)=b^*,\qquad u'(0)=0.
\]
Consequently, in any dimension, every nonnegative solution of \eqref{l1: eq2-3} whose positivity set is connected is unique up to
translations.

For $m\geq3$, the compact-support/free-boundary theory cited above gives the Dirichlet--Neumann free-boundary condition on each positivity component. Hence the restriction of a compactly supported nonnegative solution to any positivity component, extended by zero, is again a weak solution of the autonomous equation.

We next observe that the ground state $w$ has connected positivity set. Indeed, if $\{w>0\}$ had at least two connected components, then the restrictions of $w$ to two such components, extended by zero, would give two nontrivial autonomous solutions
$w_1,w_2\in\mathcal N_\infty$. Hence
\[
I_\infty(w)
\geq
I_\infty(w_1)+I_\infty(w_2)
\geq2c_\infty,
\]
which contradicts
\[
I_\infty(w)=c_\infty>0.
\]
Thus $\{w>0\}$ is connected, and the uniqueness proved above applies to $w$.

Finally, let $Z\in\mathcal D\setminus\{0\}$ be a critical point of $I_\infty$. If $Z$ is sign-changing, write
\[
Z^+=\max\{Z,0\},
\qquad
Z^-=\max\{-Z,0\}.
\]
By the same cutoff and self-testing argument used for logarithmic critical points, testing the equation by $Z^+$ and $-Z^-$ gives
\[
\langle I'_\infty(Z^+),Z^+\rangle=0,
\qquad
\langle I'_\infty(Z^-),Z^-\rangle=0.
\]
Hence
\[
Z^+,Z^-\in\mathcal N_\infty.
\]
It follows from the definition of $I_\infty$ that
\[
I_\infty(Z)
=
I_\infty(Z^+)+I_\infty(Z^-)
\geq2c_\infty.
\]
Thus, a critical point with energy below $2c_\infty$ cannot be sign-changing.

Suppose now that $Z$ is of one sign. Replacing $Z$ by $|Z|$ if necessary, we may assume that $Z\geq0$. If $m=1$, the above strong
maximum principle gives $Z>0$, and hence its positivity set is connected. If $m\geq3$, the compact-support principle gives that $Z$ has compact support. If $\{Z>0\}$ had at least two connected components, the corresponding restrictions $Z_1$ and $Z_2$, extended by zero, would satisfy
\[
Z_1,Z_2\in\mathcal N_\infty,
\]
and consequently
\[
I_\infty(Z)
\geq
I_\infty(Z_1)+I_\infty(Z_2)
\geq2c_\infty.
\]
Therefore, whenever
\[
I_\infty(Z)<2c_\infty,
\]
the positivity set of $Z$ must be connected.

By the uniqueness proved above, there exists $y\in\R^N$ such that
\[
Z=w(x-y).
\]
Hence
\[
I_\infty(Z)=c_\infty.
\]
Consequently, $I_\infty$ admits no critical value in $(c_\infty,2c_\infty)$. The proof is completed.\qed

\begin{remark}\label{l1: sec2-2}
	Condition $(AC_2)$ is mainly used to verify the monotonicity condition in the Serrin-Tang type uniqueness criterion. Together with the symmetry and compact-support properties of nonnegative autonomous solutions, this condition yields the uniqueness, up to translations, of the nonnegative autonomous solution with connected positivity set. Consequently, the no-critical-value property in $(c_\infty,2c_\infty)$ follows.
\end{remark}

\section{Uniform estimates and compactness analysis}
\noindent
In this section, we establish several uniform estimates for the approximating problems and analyze the compactness properties of their Palais--Smale sequences. These estimates are independent of the parameter $p$ near $2$ and are crucial for passing to the logarithmic limit.
\subsection{Uniform estimates for approximation}
\begin{lemma}\label{l1: sec2-3}
\it Fix $q>2$ and choose $\bar{p}>2$ such that $\bar{p}\leq 2+\frac{q-2}{m}$. Then there exists a constant $C_{\bar{p},m}>0$ such that for all $2<p\le \bar{p}$ and $s>0$ the following estimates hold:
\[
\frac{s^{p-2}-1}{p-2}\le C_{\bar{p},m}s^{q-2}\;\;\;\;\text{and}\;\;\;\;\left(\frac{s^{p-2}-1}{p-2}\right)^m\le C_{\bar{p},m}s^{q-2}.
\]
\end{lemma}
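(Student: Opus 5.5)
The plan is to split according to whether $s<1$ or $s\ge 1$. The only analytic input is the monotonicity of the difference quotient $\delta\mapsto \frac{s^{\delta}-1}{\delta}$ in $\delta>0$, which lets us compare every $p\in(2,\bar p]$ with the single exponent $\bar p$ and so avoid any dependence on $p$. Throughout, write $\delta=p-2\in(0,\bar p-2]$ and $L_\delta(s)=\frac{s^{\delta}-1}{\delta}$. I use that $m$ is odd, as fixed in the paper's standing assumptions.

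\emph{Case $0<s<1$.} Here $s^{\delta}<1$, so $L_\delta(s)<0$. Since $m$ is odd, $L_\delta(s)^m<0$ as well. Both quantities are therefore bounded by $0\le C s^{q-2}$ for any $C>0$, and nothing further is needed. Oddness of $m$ is essential here: for even $m$ the quantity $L_\delta(s)^m\approx(\ln s)^m$ is positive and unbounded as $s\to0^+$, and it cannot be dominated by $s^{q-2}\to0$.

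\emph{Case $s\ge1$.} Write $s^{\delta}=e^{\delta\ell}$ with $\ell=\ln s\ge0$. The function $\delta\mapsto\frac{e^{\delta\ell}-1}{\delta}$ is the slope of the chord of the convex function $\delta\mapsto e^{\delta\ell}$ between $0$ and $\delta$. Hence it is nondecreasing in $\delta$, and so
\[
0\le L_\delta(s)\le L_{\bar p-2}(s)\le \frac{s^{\bar p-2}}{\bar p-2}.
\]
Since $s\ge1$ and $\bar p-2\le m(\bar p-2)\le q-2$, this gives $L_\delta(s)\le \frac{1}{\bar p-2}s^{q-2}$. Raising the nonnegative bound to the $m$-th power gives
\[
L_\delta(s)^m\le \frac{s^{m(\bar p-2)}}{(\bar p-2)^m}\le\frac{s^{q-2}}{(\bar p-2)^m},
\]
where the last step uses $m(\bar p-2)\le q-2$ together with $s\ge1$.

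\emph{Choice of constant.} Set $C_{\bar p,m}:=\max\{(\bar p-2)^{-1},(\bar p-2)^{-m}\}$. This constant is independent of $p$ and $s$, and both estimates follow.

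\emph{Main obstacle.} The delicate point is uniformity up to the endpoint $p=\bar p$ when $m(\bar p-2)=q-2$. A naive mean-value bound $L_\delta(s)\le s^{\delta}\ln s$ would leave a factor $(\ln s)^m s^{m\delta}$, which is not controlled by $s^{q-2}$ when $m\delta=q-2$. The monotonicity in $\delta$ avoids this by trading the logarithm for the fixed factor $(\bar p-2)^{-1}$. I would therefore verify that monotonicity carefully, via convexity of the exponential or directly by differentiating $\delta\mapsto (e^{\delta\ell}-1)/\delta$.
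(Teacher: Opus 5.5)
Your proposal is correct and follows the paper's proof essentially step for step. It uses the same split at $s=1$, the same monotonicity of $\delta\mapsto (s^{\delta}-1)/\delta$ to compare every $p$ with $\bar p$, the same use of the oddness of $m$ on $(0,1]$, and the same constant $\max\{(\bar p-2)^{-1},(\bar p-2)^{-m}\}$; your chord-slope/convexity justification of the monotonicity just makes explicit a step the paper only asserts by differentiation.
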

\noindent
{\bf Proof.}
Let $2<p\le \bar{p}$. We first consider $s>1$. Denote
\[
f(t)=\frac{s^t-1}{t}.
\]
By differentiating this function with respect to the variable $t$, we can see that, for fixed $s>1$, it is an increasing function for $t>0$. By the choice of $\bar{p}$, one has
\[
m(\bar{p}-2)\leq q-2.
\]
Hence, for any $2<p\le \bar{p}$, according to the definition of $m$, we have
\[
0<\frac{s^{p-2}-1}{p-2}\le\frac{s^{\bar{p}-2}-1}{\bar{p}-2}\le\frac{1}{\bar{p}-2}s^{\bar{p}-2}
\le\frac{1}{\bar{p}-2}s^{m(\bar{p}-2)}\le\frac{1}{\bar{p}-2}s^{q-2}.
\]
Therefore,
\[
\left(\frac{s^{p-2}-1}{p-2}\right)^m
\le\frac{1}{(\bar{p}-2)^m}s^{m(\bar{p}-2)}
\le\frac{1}{(\bar{p}-2)^m}s^{q-2}.
\]
Thus, letting
\[
C_{\bar{p},m}
=\max\left\{\frac{1}{\bar{p}-2},
\frac{1}{(\bar{p}-2)^m}\right\},
\]
for $s>1$, we have
\[
\frac{s^{p-2}-1}{p-2}\le C_{\bar{p},m}s^{q-2},
\qquad
\left(\frac{s^{p-2}-1}{p-2}\right)^m
\le C_{\bar{p},m}s^{q-2}.
\]
Finally, if $0<s\le1$, then $s^{p-2}\le1$, so
\[
\frac{s^{p-2}-1}{p-2}\le0.
\]
Since $m$ is odd, we also have
\[
\left(\frac{s^{p-2}-1}{p-2}\right)^m\le0.
\]
Therefore, both estimates hold trivially for $0<s\le1$. This completes the proof.\qed

\begin{remark}\label{l1: sec2-4}
\Cref{l1: sec2-3} may be regarded as a higher-order extension of the corresponding estimate in \Cref{l1: sec2-1} of~\cite{wangzq2019}. In the present paper, $m$ is assumed to be an odd integer, and hence the estimate can be stated directly in terms of $\left(\frac{s^{p-2}-1}{p-2}\right)^m$. However, the argument above also suggests a natural extension in which $m$ is no longer restricted to odd integers. More precisely, for any real $m\geq1$, one may instead consider $\left[\frac{s^{p-2}-1}{p-2}\right]_+^m$. Indeed, when $s>1$, we have $\left[\frac{s^{p-2}-1}{p-2}\right]_+^m=\left(\frac{s^{p-2}-1}{p-2}\right)^m$, and hence the proof of \Cref{l1: sec2-3} remains unchanged. On the other hand, when $0<s\leq1$, $\left[\frac{s^{p-2}-1}{p-2}\right]_+^m=0$, so no additional estimate is required on the small-amplitude region. Therefore, under the same condition, the estimates hold for every real $m\geq1$. Consequently, we conjecture that this method of taking the positive part may eliminate the restriction on $m$ being an odd integer when estimating the growth of a function, and may be of practical value for generalizing the current high-order perturbation to a more general exponent $m\geq 1$.
\end{remark}

\begin{lemma}\label{l1: sec2-5}
\it There exists $p_0\in(2,\bar{p})$ such that for any $d_1,d_2>0$, if $u\in H^1(\mathbb{R}^N)$ and $2<p<p_0$ satisfy
\[
I_p(u)\leq d_1,\qquad\|I'_p(u)\|_{H^{-1}(\mathbb{R}^N)}\leq d_2,
\]
then there exists a constant $C(d_1,d_2)>0$ such that
$$
\|u\|_{H^1(\R^N)},\;\;\int_{\mathbb{R}^N}\Big|\frac{|u|^p-u^2}{p-2}\Big|dx,
\;\;\int_{\R^N}\int_0^u\left(\frac{|t|^{p-2}-1}{p-2}\right)^{m-1}|t|^{p-2}tdtdx,
\;\;\int_{\R^N}|u|^2\left|\frac{|u|^{p-2}-1}{p-2}\right|^mdx\leq C(d_1,d_2).
$$
Moreover, it was also found that $\int_{\R^N}|u|^pdx$ can be controlled by this constant, i.e. $\int_{\R^N}|u|^pdx\leq C(d_1,d_2)$.
\end{lemma}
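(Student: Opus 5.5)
The plan is to combine the two hypotheses through the quantity $I_p(u)-\frac12\langle I_p'(u),u\rangle$, as in Wang--Zhang \cite{wangzq2019}, and then close the $H^1$ estimate with a Gagliardo--Nirenberg interpolation that is uniform for $p$ near $2$. Throughout, write $L_p(s):=\frac{|s|^{p-2}-1}{p-2}$.

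\textbf{Step 1: a Pohozaev--Nehari type identity.} I subtract $\frac12\varphi_p(u)$ from $I_p(u)$ and use the integration by parts formula for $\gamma\int\int_0^u L_p(t)^m t\,dt\,dx$ stated after the definition of $I_p$. The gradient term, the potential term, the $\frac{|u|^p-u^2}{p-2}$ term and the $\frac{\gamma}{2}\int u^2L_p(u)^m$ terms all cancel, leaving
\[
I_p(u)-\tfrac12\langle I_p'(u),u\rangle=\frac{\beta}{2p}\int_{\R^N}|u|^p\,dx+\frac{m\gamma}{2}\int_{\R^N}\int_0^u L_p(t)^{m-1}|t|^{p-2}t\,dt\,dx .
\]
Since $m$ is odd, $m-1$ is even. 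Hence the inner integrand has the sign of $t$, and both terms on the right are nonnegative. The hypotheses then give
\[
\int_{\R^N}|u|^p\,dx+\int_{\R^N}\int_0^u L_p(t)^{m-1}|t|^{p-2}t\,dt\,dx\le C\bigl(d_1+d_2\|u\|\bigr),
\]
which is linear in $\|u\|$. So these two quantities are bounded once $\|u\|$ is.

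\textbf{Step 2: the $H^1$ bound (main obstacle).} I will use $\varphi_p(u)=\langle I_p'(u),u\rangle\le d_2\|u\|$ together with $V\ge V_0$ from $(AC_1)$. On $\{|u|\le1\}$ both $\frac{|u|^p-u^2}{p-2}$ and $L_p(u)^m$ are $\le0$, because $m$ is odd, so these contributions have the good sign and can be dropped. This yields
\[
\alpha\|\nabla u\|_2^2+V_0\|u\|_2^2\le d_2\|u\|+\int_{\{|u|>1\}}\Bigl[\beta\tfrac{|u|^p-u^2}{p-2}+\gamma u^2L_p(u)^m\Bigr]dx .
\]
On $\{|u|>1\}$, \Cref{l1: sec2-3}, applied with an auxiliary exponent $q_1=2+\delta$ and $p$ so close to $2$ that $m(p-2)\le\delta$, bounds the integrand by $C_\delta|u|^{2+\delta}$. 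Here $p_0$ will be chosen in terms of $\delta$.

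The difficulty is that a bound of the form $C\|u\|^{2+\delta}$ cannot be absorbed into the left-hand side. I therefore interpolate. For $p<2+\delta<2^*$ (or a fixed exponent in dimensions $1,2$), H\"older gives
\[
\int_{\{|u|>1\}}|u|^{2+\delta}\le\Bigl(\int|u|^p\Bigr)^{\theta}\,\|u\|_{r}^{(2+\delta)(1-\theta)}\le C\bigl(1+\|u\|\bigr)^{\theta}\,\|u\|^{(2+\delta)(1-\theta)} ,
\]
where $r$ is a fixed subcritical exponent, Sobolev's inequality is used for $\|u\|_r$, and Step 1 is used for $\int|u|^p$. As $\delta\to0$ and $p\to2^+$, the weight $1-\theta\to0$, so the total exponent $\theta+(2+\delta)(1-\theta)$ is strictly less than $2$. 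Young's inequality then gives $\|u\|\le C(d_1,d_2)$. The delicate point is arranging $\delta$, $r$ and $p_0$ so that the resulting constants are uniform in $p\in(2,p_0)$; this is where I expect most of the work to lie.

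\textbf{Step 3: the remaining quantities.} With $\|u\|$ bounded, Step 1 bounds $\int|u|^p$ and the double integral. For the absolute-value integrals, I split each into the part on $\{|u|>1\}$ and the part on $\{|u|\le1\}$. The part on $\{|u|>1\}$ is bounded by $C\int|u|^{2+\delta}\le C$ from Step 2. For the part on $\{|u|\le1\}$, the identity $\varphi_p(u)=\langle I_p'(u),u\rangle$ expresses the negative contributions as
\[
\alpha\|\nabla u\|_2^2+\int Vu^2-\langle I_p'(u),u\rangle+\bigl(\text{positive parts on }\{|u|>1\}\bigr),
\]
and all of these are already bounded. This controls $\int\bigl|\frac{|u|^p-u^2}{p-2}\bigr|$ and $\int u^2|L_p(u)|^m$ by $C(d_1,d_2)$.
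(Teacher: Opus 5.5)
Your proposal is correct and follows essentially the same route as the paper. The shared steps are: the identity $2I_p(u)-\varphi_p(u)=\frac{\beta}{p}\int|u|^p+m\gamma\int\int_0^u L_p(t)^{m-1}|t|^{p-2}t\,dt\,dx$; dropping the favorable terms on $\{|u|\le1\}$; the growth bound of \Cref{l1: sec2-3} on $\{|u|>1\}$; H\"older interpolation between $L^p$ and a fixed subcritical Lebesgue space with total exponent close to $1$; and finally the Nehari identity to control the negative parts on $\{|u|\le1\}$.

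The only deviation is that you get the coercive inequality from $\varphi_p(u)\le d_2\|u\|$ instead of from $I_p(u)\le d_1$. This works equally well because the extra term is linear in $\|u\|$. Separately, the $\{|u|>1\}$ contribution in your Step 3 identity should carry a minus sign, which is harmless since every term there is already bounded in absolute value.
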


\noindent
{\bf Proof.}
Since $m$ is odd, $m-1$ is even, and hence $\left(\frac{|t|^{p-2}-1}{p-2}\right)^{m-1}|t|^{p-2}t$ is odd. Moreover, for all $u\in\mathbb R$, there exists
\[
\int_0^u
\left(\frac{|t|^{p-2}-1}{p-2}\right)^{m-1}
|t|^{p-2}t\,dt\ge0.
\]
In virtue of the definition of $\|I'_p(u)\|_{H^{-1}(\mathbb{R}^N)}$ and the problem conditions, a direct computation gives
\begin{eqnarray}
	\label{l1: eq2-5}
	2d_1+d_2\|u\|_{H^1{(\R^N)}}
	&\geq& 2I_p(u)-\varphi_p(u)
	=   \frac{\beta}{p}\int_{\R^N}|u|^pdx
	+m\gamma\int_{\R^N}\int_0^u\left(\frac{|t|^{p-2}-1}{p-2}\right)^{m-1}|t|^{p-2}tdtdx.
\end{eqnarray}

We now prove that $\|u\|_{H^1(\mathbb{R}^N)}$ is bounded. Fix $S\in(2,2^*)$, where $2^*$ is the Sobolev critical exponent. Let $q_0\in(2,S)$ and $p_0\in(2,\bar p)$ are sufficiently close to 2 such that
\[
p_0<q_0,\qquad m(p_0-2)\leq q_0-2.
\]
For every $2<p<p_0$, according to \Cref{l1: sec2-3}, for $s>1$, one has
\[
0<\frac{s^{p-2}-1}{p-2}\le C_{\bar{p},m}s^{q_0-2},
\qquad
0<\left(\frac{s^{p-2}-1}{p-2}\right)^m
\le C_{\bar{p},m}s^{q_0-2};
\]
for $0<s\leq1$, we have
\[
\frac{s^{p-2}-1}{p-2}\leq0,\qquad\left(\frac{s^{p-2}-1}{p-2}\right)^m\leq0.
\]
Consequently, using $V(x)\geq V_0>0$, from the definition of $I_p$ we obtain
\begin{eqnarray}
\label{l1: eq2-6}
     d_1\geq I_p(u)
&\geq& \frac12\min\{\alpha,V_0\}\|u\|_{H^1(\mathbb{R}^N)}^2
-\frac{\beta}{2} \int_{\mathbb{R}^N}\frac{|u|^{p}-u^2}{p-2} dx
-\gamma\int_{\mathbb{R}^N} \int_0^u \left(\frac{|t|^{p-2}-1}{p-2}\right)^m tdtdx\nonumber\\
&\geq& \frac12\min\{\alpha,V_0\}\|u\|_{H^1(\mathbb{R}^N)}^2
-\frac{\beta}{2} \int_{\mathbb{R}^N}C_{\bar{p},m}|u|^{q_0}dx
-\gamma \int_{\mathbb{R}^N}\int_0^uC_{\bar{p},m}|t|^{q_0-2}tdtdx\nonumber\\
& =  & \frac12\min\{\alpha,V_0\}\|u\|_{H^1(\mathbb{R}^N)}^2
-C_{\bar{p},m}\left(\frac{\beta}{2} +\frac{\gamma}{q_0}\right)\int_{\mathbb{R}^N}|u|^{q_0}dx.
\end{eqnarray}

Since $p<q_0<S$, choose $\theta\in(0,1)$ such that
\[
\frac1{q_0}=\frac{\theta}{p}+\frac{1-\theta}{S}.
\]
Therefore, we can conclude that $\frac{q_0\theta}{p}=1-\frac{(1-\theta)q_0}{S}\in (0,1)$. By the H\"{o}lder inequality, (\ref{l1: eq2-5}), the Sobolev embedding inequality and $C_p$ inequality, we have
\begin{eqnarray*}
       \int_{\mathbb{R}^N}|u|^{q_0}dx=\int_{\mathbb{R}^N}|u|^{\theta q_0}|u|^{(1-\theta) q_0}dx
& =  & \int_{\mathbb{R}^N}(|u|^p)^{\frac{\theta q_0}{p}}(|u|^S)^{\frac{(1-\theta)q_0}{S}}dx\\
&\leq& \left[\int_{\mathbb{R}^N}\left((|u|^p)^{\frac{\theta q_0}{p}}\right)^{\frac{p}{\theta q_0}}dx\right]^{\frac{\theta q_0}{p}}
\left[\int_{\mathbb{R}^N}\left((|u|^S)^{\frac{(1-\theta)q_0}{S}}\right)^{\frac{S}{(1-\theta) q_0}}dx\right]^{\frac{(1-\theta) q_0}{S}}\\
& =  & \left(\int_{\mathbb{R}^N}|u|^pdx\right)^{\frac{\theta q_0}{p}}
\left(\int_{\mathbb{R}^N}|u|^Sdx\right)^{\frac{(1-\theta) q_0}{S}}\\
&\leq& \left[\frac{p}{\beta}\big(2d_1+d_2\|u\|_{H^1{(\R^N)}}\big)\right]^{\frac{\theta q_0}{p}}
\left(C_1\|u\|_{H^1{(\R^N)}}\right)^{(1-\theta) q_0}\\
& =  & \left(\frac{2pd_1}{\beta}\right)^{\frac{\theta q_0}{p}}C_1^{(1-\theta) q_0}\|u\|_{H^1{(\R^N)}}^{(1-\theta) q_0}
+\left(\frac{pd_2}{\beta}\right)^{\frac{\theta q_0}{p}}C_1^{(1-\theta) q_0}\|u\|_{H^1{(\R^N)}}^{\frac{\theta q_0}{p}+(1-\theta) q_0}.
\end{eqnarray*}
Based on the initial range of parameter selections, it can be seen that $\theta=\frac{p(S-q_0)}{q_0(S-p)}$. Then, when $p,q_0 \to 2^+$, we have
\begin{eqnarray*}
       \frac{\theta q_0}{p}+(1-\theta)q_0
 =\frac{q_0}{p}\times\frac{p(S-q_0)}{q_0(S-p)}+\left[1-\frac{p(S-q_0)}{q_0(S-p)}\right]q_0
& =  &\frac{S-q_0+S(q_0-p)}{S-p}\\
& \to&\frac{S-2+S(2-2)}{S-2}=1.
\end{eqnarray*}
So, we can set $\frac{\theta q_0}{p}+(1-\theta)q_0<\frac{3}{2}$. Thus, there exists
\begin{eqnarray}
\label{l1: eq2-7}
\int_{\mathbb{R}^N}|u|^{q_0}dx \leq C_2\|u\|_{H^1(\mathbb{R}^N)}^{\frac{3}{2}}.
\end{eqnarray}
Combining (\ref{l1: eq2-6}) and (\ref{l1: eq2-7}), we can obtain
\[
d_1\geq \frac12\min\{\alpha,V_0\}\|u\|_{H^1(\mathbb{R}^N)}^2
-C_2C_{\bar{p},m}\left(\frac{\beta}{2} +\frac{\gamma}{q_0}\right)\|u\|_{H^1(\mathbb{R}^N)}^{3/2},
\]
which is equivalent to $\|u\|_{H^1(\mathbb{R}^N)}\leq C(d_1,d_2)$. Substituting the results of (\ref{l1: eq2-7}) and $\|u\|_{H^1(\mathbb{R}^N)}\leq C(d_1,d_2)$ back into (\ref{l1: eq2-5}), we get
$$
\int_{\mathbb{R}^N}|u|^pdx\leq C(d_1,d_2)\qquad\text{and}\qquad
\int_{\mathbb{R}^N}\int_0^u\left(\frac{|t|^{p-2}-1}{p-2}\right)^{m-1}|t|^{p-2}t\,dt\,dx\leq C(d_1,d_2).
$$

We next estimate the higher-order term. Set
\[
A_0:=\{x\in\mathbb{R}^N:|u(x)|\leq1\},\qquad
A_1:=\{x\in\mathbb{R}^N:|u(x)|>1\}.
\]
On $A_1$, by \Cref{l1: sec2-3}, (\ref{l1: eq2-7}) and $\|u\|_{H^1(\mathbb{R}^N)}\leq C(d_1,d_2)$, one has
\[
\int_{A_1}u^2\left(\frac{|u|^{p-2}-1}{p-2}\right)^m dx
\leq \int_{\mathbb{R}^N}C_{\bar{p},m}|u|^{q_0}dx\leq C(d_1,d_2),
\]
and similarly,
\[
\int_{A_1}u^2\left(\frac{|u|^{p-2}-1}{p-2}\right) dx\leq C(d_1,d_2).
\]
On $A_0$, we have
\[
\left(\frac{|u|^{p-2}-1}{p-2}\right) \leq0,\qquad \left(\frac{|u|^{p-2}-1}{p-2}\right)^m \leq0.
\]
According to the norm of the derivative of the restriction, we have 
$$
|\varphi_p(u)|=|\langle I'_p(u),u|\leq d_2\|u\|_{H^1(\mathbb{R})^N}\leq C(d_1,d_2)
$$
Using the definition of $\varphi_p(u)$, after a few simple transformations, we can also derive
\begin{eqnarray*}
& & \varphi_p(u)+\beta\int_{A_1}u^2\left(\frac{|u|^{p-2}-1}{p-2}\right)dx
+\gamma\int_{A_1}u^2\left(\frac{|u|^{p-2}-1}{p-2}\right)^m dx\\
&=&\int_{\mathbb{R}^N}\left(\alpha|\nabla u|^2+V(x)u^2\right)dx
-\beta\int_{A_0}u^2\left(\frac{|u|^{p-2}-1}{p-2}\right)dx
-\gamma\int_{A_0}u^2\left(\frac{|u|^{p-2}-1}{p-2}\right)^m dx.
\end{eqnarray*}
Based on the above boundedness result, we can conclude that
$$
\int_{A_0}u^2\left|\frac{|u|^{p-2}-1}{p-2}\right|dx\leq C(d_1,d_2)\qquad\text{and}\qquad
\int_{A_0}u^2\left|\frac{|u|^{p-2}-1}{p-2}\right|^m dx\leq C(d_1,d_2).
$$
Hence, one has
$$
\int_{\mathbb{R}^N}\left|\frac{|u|^{p}-u^2}{p-2}\right|dx\leq C(d_1,d_2)\qquad\text{and}\qquad
\int_{\mathbb{R}^N}u^2\left|\frac{|u|^{p-2}-1}{p-2}\right|^m dx\leq C(d_1,d_2).
$$
Thus, this proof is completed.\qed

\subsection{Convergence of energy levels}
\begin{lemma}\label{l1: sec2-6}
	\it Assume the conditions $(AC_1)$ holds, there exists $0<\liminf\limits_{p\to2^+}c_p\le\limsup\limits_{p\to2^+}c_p\le c\le c_\infty$.
\end{lemma}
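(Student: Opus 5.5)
Three inequalities have to be proved: $c\le c_\infty$, $\limsup_{p\to2^+}c_p\le c$, and $\liminf_{p\to2^+}c_p>0$. All three rest on the ray characterizations $c_p=\inf_{u\ne0}\max_{t>0}I_p(tu)$, $c=\inf_{u\in\mathcal D\setminus\{0\}}\max_{t>0}I(tu)$ and $c_\infty=\inf_{u\in\mathcal D\setminus\{0\}}\max_{t>0}I_\infty(tu)$ from Subsection 2.2. They also use the monotone structure of $s\mapsto\frac{s^{p-2}-1}{p-2}$ together with Lemma~\ref{l1: sec2-3}.

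\emph{Step 1: $c\le c_\infty$.} Take the ground state $w\in\mathcal D$ of \eqref{l1: eq2-3} with $I_\infty(w)=c_\infty$, and translate it, $w_y:=w(\cdot-y)$ with $|y|\to\infty$. One has
\[I(tw_y)=I_\infty(tw_y)+\frac{t^2}{2}\int(V(x)-1)w_y^2\,dx.\]
The correction term tends to $0$ uniformly for $t$ in compact sets, by dominated convergence and $V\to1$ at infinity. The maximizing parameters $t_y$ for $I(tw_y)$ stay in a compact subset of $(0,\infty)$, which follows from $\varphi(t_yw_y)=0$ and $V_0\le V\le V^*$. Hence
\[c\le \max_{t>0}I(tw_y)\le \max_{t>0}I_\infty(tw)+o(1)=c_\infty+o(1).\]
If $w$ has compact support ($m\ge3$), the same argument works with the translates. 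An alternative route, if one prefers to avoid translations, is to note that for $V^*$ close to $1$ one could compare directly, but I will use translations.

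\emph{Step 2: $\limsup c_p\le c$.} Fix $\varepsilon>0$ and choose $u\in C_c^\infty(\mathbb R^N)\setminus\{0\}$ with $\max_{t>0}I(tu)<c+\varepsilon$. The infimum over $\mathcal D$ can be approximated by $C_c^\infty$ functions by density. This uses the continuity of $I$ along truncation/mollification in $\mathcal D$, since the logarithmic moment $\int u^2|\ln|u||^m$ is stable under these operations. On the compact support $K$ of $u$ and for $t$ in a compact interval $[a,b]\subset(0,\infty)$, the convergences
\[\frac{|tu|^{p}-t^2u^2}{p-2}\to t^2u^2\ln(t|u|),\qquad \int_0^{tu}\Bigl(\frac{|s|^{p-2}-1}{p-2}\Bigr)^m s\,ds\to F(tu)\]
hold pointwise. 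They are dominated because $|\frac{s^{p-2}-1}{p-2}|\le |\ln s|\,\max\{1,s^{\bar p-2}\}$, using the mean value theorem and $s^{p-2}\le1$ for $s\le1$, and because $s^2|\ln s|^m$ is integrable on bounded sets. So $I_p(tu)\to I(tu)$ uniformly on $[a,b]$. I also need the maximizers $t_p$ of $I_p(tu)$ to remain in $[a,b]$. From $\varphi_p(t_pu)=0$, the decreasing function $t\mapsto\varphi_p(tu)/t^2$ converges locally uniformly to $\varphi(tu)/t^2$, and the latter tends to $+\infty$ as $t\to0$ and to $-\infty$ as $t\to\infty$, since $\ln t\to\mp\infty$ and $\beta\ell+\gamma\ell^m$ is increasing. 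This confines $t_p$. Then $\limsup c_p\le\max_t I(tu)<c+\varepsilon$.

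\emph{Step 3: $\liminf c_p>0$.} This is the main obstacle, since the first-order term $\frac{|u|^p-u^2}{p-2}$ has coefficient $\frac1{p-2}$ and could a priori destroy coercivity as $p\to2^+$. For $u\in\mathcal N_p$, discard the nonpositive small-amplitude parts, which is allowed because $m$ is odd. Then Lemma~\ref{l1: sec2-3} with the fixed $q_0$ gives
\[\min\{\alpha,V_0\}\|u\|^2\le\int(\alpha|\nabla u|^2+Vu^2)\le (\beta+\gamma)C_{\bar p,m}\int|u|^{q_0}\le C\|u\|^{q_0},\]
with $C$ independent of $p\in(2,p_0)$. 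Hence $\|u\|\ge\delta>0$ uniformly. Next, on $\mathcal N_p$ one has
\[I_p(u)=\frac12\varphi_p(u)+\frac{\beta}{2p}\int|u|^p+\frac{m\gamma}{2}\int\int_0^u(\cdots)^{m-1}|t|^{p-2}t\,dt\ge\frac{\beta}{2p}\int|u|^p,\]
by the identity behind \eqref{l1: eq2-5}. It remains to bound $\int|u|^p$ below uniformly. I plan to use the interpolation of Lemma~\ref{l1: sec2-5}:
\[\delta^2\lesssim\int|u|^{q_0}\le\Bigl(\int|u|^p\Bigr)^{\theta q_0/p}(C\|u\|)^{(1-\theta)q_0}.\]
For this I need an upper bound on $\|u\|$ for near-minimizers. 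Such near-minimizers satisfy $I_p(u)\le c+1$ for $p$ close to $2$ by Step 2, and their derivative restricted to $\mathcal N_p$ is controlled (for instance through the Ekeland variational principle), so Lemma~\ref{l1: sec2-5} gives $\|u\|\le C(c+1,1)$. Combining these bounds yields $\int|u|^p\ge\eta>0$ uniformly in $p$, and hence $\liminf c_p\ge\beta\eta/4>0$.
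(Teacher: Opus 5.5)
Your proof is correct. Step 1 is the paper's translation argument, with one adjustment. You start from a ground state $w$ with $I_\infty(w)=c_\infty$, whose existence is not among the hypotheses of this lemma (only $(AC_1)$ is assumed). The paper instead takes any $w\in\mathcal N_\infty$ with $I_\infty(w)<c_\infty+\varepsilon$, and your argument works verbatim for such $w$.

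Steps 2 and 3 take genuinely different routes. In Step 2 the paper fixes $u\in\mathcal N$ and shows the Nehari parameters satisfy $t_p\to1$. It then needs only a one-sided comparison: since $\frac{s^{p-2}-1}{p-2}\ge\ln s$ and $m$ is odd, every logarithmic difference in $I_p(t_pu)-I(t_pu)$ has the favorable sign, leaving $\frac{\beta}{2p}\int|t_pu|^p-\frac{\beta}{4}\int|t_pu|^2\to0$. Your two-sided dominated convergence also works. However, the detour through $C_c^\infty$ is unnecessary, and it rests on an unproved claim that the logarithmic moment is stable under mollification. For any $u\in\mathcal D$ the integrands are already dominated, via Lemma~\ref{l1: sec2-3} on $\{t|u|>1\}$ and $\bigl|\frac{s^{p-2}-1}{p-2}\bigr|\le|\ln s|$ on $\{t|u|\le1\}$.

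In Step 3 the paper uses the uniform bound $I_p(u)\ge\frac12\min\{\alpha,V_0\}\|u\|^2-C_0\|u\|^{q_0}$, valid for all $u$. This gives $I_p\ge\eta>0$ on a small sphere $\|u\|=\rho$, and the ray characterization then yields $c_p\ge\eta$ for all $p\in(2,p_0)$, independently of Step 2. Your chain (Nehari lower bound $\|u\|\ge\delta$, upper bound on near-minimizers, interpolation, and $I_p\ge\frac{\beta}{2p}\int|u|^p$ on $\mathcal N_p$) is valid. It is longer, though, and relies on Step 2 to get $c_p<c+1$.

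Ekeland is not needed there. For $u\in\mathcal N_p$, the proof of Lemma~\ref{l1: sec2-5} uses $I_p'$ only through $\varphi_p(u)=0$, so $I_p(u)\le c+1$ alone bounds $\|u\|$. Moreover, Ekeland would control only the constrained derivative, not the free one that lemma assumes. Finally, the estimate $\min\{\alpha,V_0\}\|u\|^2\le C\|u\|^{q_0}$ you wrote for $u\in\mathcal N_p$ comes from the same use of Lemma~\ref{l1: sec2-3} that, applied to $I_p$ itself, gives the paper's lower bound. Your concern about the coefficient $\frac{1}{p-2}$ is therefore already handled by Lemma~\ref{l1: sec2-3}, and the shorter argument was within reach.
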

\noindent
{\bf Proof.}
The proof is divided into three steps.

\medskip
\noindent
{\it Step 1. We prove that $c\leq c_\infty$.}

By the definition of $c_\infty$, for every $\varepsilon>0$ there exists
$w\in\mathcal N_\infty$ such that
\[
I_\infty(w)
=\max_{t>0}I_\infty(tw)
<c_\infty+\varepsilon.
\]
Let $w_h(x)=w(x-h)$. For $t>0$, define
\[
F_h(t)
:=\int_{\mathbb R^N}\bigl(\alpha|\nabla w_h|^2+V(x)w_h^2\bigr)\,dx
-\int_{\mathbb R^N}
\bigl[\beta\ln(t|w_h|)+\gamma(\ln(t|w_h|))^m\bigr]w_h^2\,dx
\]
and
\[
F_\infty(t)
:=\int_{\mathbb R^N}\bigl(\alpha|\nabla w|^2+w^2\bigr)\,dx
-\int_{\mathbb R^N}
\bigl[\beta\ln(t|w|)+\gamma(\ln(t|w|))^m\bigr]w^2\,dx.
\]
Since $m$ is odd, the map
\[
s\longmapsto \beta s+\gamma s^m
\]
is strictly increasing on $\mathbb R$. Hence both $F_h$ and $F_\infty$ are strictly decreasing on $(0,\infty)$.
Let $t_h>0$ be the unique number such that $t_hw_h\in\mathcal N$; equivalently,
\[
F_h(t_h)=0.
\]
By $(AC_1)$ and the dominated convergence theorem,
\[
\int_{\mathbb R^N}V(x)w_h^2\,dx
=\int_{\mathbb R^N}V(y+h)w^2(y)\,dy
\longrightarrow\int_{\mathbb R^N}w^2\,dy
\qquad\text{as }|h|\to\infty.
\]
Moreover, the nonlinear terms are translation invariant. Therefore
\[
F_h(t)-F_\infty(t)
=\int_{\mathbb R^N}\bigl(V(y+h)-1\bigr)w^2(y)\,dy
\longrightarrow0
\]
as $|h|\to\infty$, uniformly for $t$ in compact subsets of $(0,\infty)$.
Since $w\in\mathcal N_\infty$, $F_\infty(1)=0$, and the zero of $F_\infty$ is unique. It follows that
\[
t_h\longrightarrow1\qquad\text{as }|h|\to\infty.
\]
Using again $(AC_1)$, the translation invariance of the nonlinear terms, and the continuity with respect to the scaling parameter, we obtain
\[
I_\infty(t_hw_h)=I_\infty(t_hw(x-h))\longrightarrow I_\infty(w)\qquad\text{as }|h|\to\infty.
\]
Since $t_hw_h\in\mathcal N$, by the definition of $c$,
\[
c\leq I(t_hw_h).
\]
Thus
\[
c\leq I_\infty(w)<c_\infty+\varepsilon.
\]
Since $\varepsilon>0$ is arbitrary, we conclude that
\[
c\leq c_\infty.
\]

\medskip
\noindent
{\it Step 2. We prove that $\limsup\limits_{p\to2^+}c_p\leq c$.}

Fix $u\in\mathcal N$. For $p>2$ sufficiently close to $2$, let $t_p>0$ be the unique number such that $t_pu\in\mathcal N_p$. Define
\[
F_p(t)
:=\int_{\mathbb R^N}\bigl(\alpha|\nabla u|^2+V(x)u^2\bigr)\,dx
-\int_{\mathbb R^N}
\left[
\beta\frac{|tu|^{p-2}-1}{p-2}
+\gamma\left(\frac{|tu|^{p-2}-1}{p-2}\right)^m
\right]u^2\,dx
\]
and
\[
F(t)
:=\int_{\mathbb R^N}\bigl(\alpha|\nabla u|^2+V(x)u^2\bigr)\,dx
-\int_{\mathbb R^N}
\bigl[\beta\ln(t|u|)+\gamma(\ln(t|u|))^m\bigr]u^2\,dx.
\]
Then $F_p(t_p)=0$ and $F(1)=0$. As above, $F_p$ and $F$ are strictly decreasing in $t$.
Furthermore, by \Cref{l1: sec2-3}, the logarithmic integrability of $u\in\mathcal D$, and the Sobolev embedding, one has
\[
F_p(t)\longrightarrow F(t)
\qquad\text{as }p\to2^+
\]
uniformly for $t$ in every compact subinterval of $(0,\infty)$. Indeed, on the region $t|u|\leq1$ we use
\[
\left|\frac{|tu|^{p-2}-1}{p-2}\right|\leq |\ln(t|u|)|,
\]
whereas on $t|u|>1$ the terms are uniformly controlled by a fixed subcritical power.
Since $F$ is strictly decreasing and $F(1)=0$, we may choose $0<a<1<b$ such that
\[
F(a)>0>F(b).
\]
For $p>2$ sufficiently close to $2$, the same inequalities hold for $F_p$, and hence
\[
a<t_p<b.
\]
Every convergent subsequence of $\{t_p\}$ has a limit $t_*$ satisfying $F(t_*)=0$. By uniqueness of the zero of $F$, $t_*=1$. Consequently,
\[
t_p\longrightarrow1\qquad\text{as }p\to2^+.
\]

By the definitions of $I_p$, $I$ and the inequality
\[
\frac{s^{p-2}-1}{p-2}\geq\ln s,
\qquad s>0,
\]
(see \cite{fengwx2024}), and since $m$ is odd, we have
\begin{eqnarray*}
	I_p(t_pu)-I(t_pu)
	&=&\frac{\beta}{2p}\int_{\mathbb R^N}|t_pu|^p\,dx
	-\frac{\beta}{4}\int_{\mathbb R^N}|t_pu|^2\,dx\\
	&&-\frac{\beta}{2}\int_{\mathbb R^N}
	\left(\frac{|t_pu|^{p-2}-1}{p-2}-\ln|t_pu|\right)|t_pu|^2\,dx\\
	&&-\gamma\int_{\mathbb R^N}\int_0^{t_pu}
	\left[
	\left(\frac{|t|^{p-2}-1}{p-2}\right)^m-(\ln|t|)^m
	\right]t\,dt\,dx\\
	&\leq&\frac{\beta}{2p}\int_{\mathbb R^N}|t_pu|^p\,dx
	-\frac{\beta}{4}\int_{\mathbb R^N}|t_pu|^2\,dx.
\end{eqnarray*}
Since $t_p\to1$ and $u\in H^1(\mathbb R^N)\subset L^2(\mathbb R^N)\cap L^q(\mathbb R^N)$ for some fixed $q>2$, the dominated convergence theorem yields
\[
\int_{\mathbb R^N}|t_pu|^p\,dx\longrightarrow\int_{\mathbb R^N}|u|^2\,dx.
\]
Hence
\[
\limsup_{p\to2^+}\bigl(I_p(t_pu)-I(t_pu)\bigr)\leq0.
\]
Moreover, $u\in\mathcal D$ and $t_p\to1$ imply
\[
I(t_pu)\longrightarrow I(u).
\]
Therefore,
\[
\limsup_{p\to2^+}I_p(t_pu)\leq I(u).
\]
Since $t_pu\in\mathcal N_p$,
\[
c_p\leq I_p(t_pu),
\]
and consequently
\[
\limsup_{p\to2^+}c_p\leq I(u).
\]
Taking the infimum over $u\in\mathcal N$, we obtain
\[
\limsup_{p\to2^+}c_p\leq c.
\]

\medskip
\noindent
{\it Step 3. We prove that $\liminf\limits_{p\to2^+}c_p>0$.}

Let $p_0>2$ and $q_0>2$ be chosen as in \Cref{l1: sec2-5}. Repeating the estimate in \eqref{l1: eq2-6}, but without imposing any upper bound on $I_p(u)$, there exists a constant $C_0>0$, independent of $p\in(2,p_0)$, such that
\begin{equation}\label{eq:uniform-positive-geometry}
	I_p(u)
	\geq
	\frac12\min\{\alpha,V_0\}\|u\|_{H^1(\mathbb R^N)}^2
	-C_0\|u\|_{H^1(\mathbb R^N)}^{q_0},
	\qquad u\in H^1(\mathbb R^N).
\end{equation}
Indeed, on $|u|\leq1$ the two nonlinear primitive terms have the favorable sign, while on $|u|>1$ they are bounded, uniformly for $p\in(2,p_0)$, by a constant multiple of $|u|^{q_0}$; the Sobolev embedding then gives \eqref{eq:uniform-positive-geometry}.

Set
\[
a_0:=\min\{\alpha,V_0\}>0.
\]
Choose $\rho>0$ sufficiently small so that
\[
C_0\rho^{q_0-2}\leq\frac{a_0}{4}.
\]
Then, for every $p\in(2,p_0)$ and every $u\in H^1(\mathbb R^N)$ satisfying $\|u\|_{H^1}=\rho$, \eqref{eq:uniform-positive-geometry} yields
\[
I_p(u)\geq\frac{a_0}{4}\rho^2=:\eta>0.
\]
Recall from the Nehari characterization established above that
\[
c_p
=
\inf_{u\in H^1(\mathbb R^N)\setminus\{0\}}
\max_{t>0}I_p(tu).
\]
For any $u\neq0$, taking
\[
t=\frac{\rho}{\|u\|_{H^1}}
\]
gives $\|tu\|_{H^1}=\rho$, and therefore
\[
\max_{t>0}I_p(tu)\geq\eta.
\]
Taking the infimum over all $u\neq0$, we obtain
\[
c_p\geq\eta>0,
\qquad 2<p<p_0.
\]
Consequently,
\[
\liminf_{p\to2^+}c_p\geq\eta>0.
\]
This completes the proof.\qed

\subsection{Profile decomposition of Palais-Smale sequences}
\noindent
We recall the compactness properties of Palais-Smale sequences associated with the autonomous problem. Due to the lack of compactness of the embedding on $\mathbb R^N$, a bounded Palais-Smale sequence may lose compactness through translations. The profile decomposition below describes this phenomenon by decomposing the sequence into translated solutions of the autonomous equation.
\begin{lemma}\label{l1: sec2-7}
	Let $p_0\in(2,\bar{p})$ be chosen as in \Cref{l1: sec2-5}, and let $\{p_n\}\subset(2,p_0)$ and $\{u_n\}\subset H^1(\R^N)$ satisfy
	\begin{eqnarray*}
		&p_n\to 2^+,\;\;u_n\in\mathcal{N}_{p_n},\nonumber\\
		&\|I'_{p_n}|_{\mathcal N_{p_n}}(u_n)\|_*\to 0,\;\;I_{p_n}(u_n)\to d\in\R\;\;\text{as}\;\;n\to\infty.
	\end{eqnarray*}
	Then up to a subsequence of $\{u_n\}\in\mathcal N_{p_n}$, still denoted by $\{u_n\}$, there exist a weak solution $\widetilde u\in\mathcal D$ of (\ref{l1: eq1-1}), an integer $k\ge0$, nontrivial weak solutions $X^1,\ldots,X^k\in\mathcal D$ of (\ref{l1: eq2-3}), and translations $y_n^i\in\mathbb R^N$ such that
	\begin{eqnarray*}
		&|y_n^i|\to\infty\;\;\text{and}\;\; |y_n^i-y_n^j|\to+\infty\;\;\text{if}\;\;i\ne j,\\
		&u_n-\sum\limits_{i=1}^kX^i(x-y_n^i)\to\widetilde u\;\;\text{in}\;\;H^1(\mathbb{R}^N),\\
		&d=I(\widetilde u)+\sum\limits_{i=1}^kI_\infty(X^i).
	\end{eqnarray*}
	Note, when $k=0$, the above results hold without $X^i$.
\end{lemma}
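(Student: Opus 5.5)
We follow the scheme of the profile decomposition in \cite{fengwx2024}, adapted to the mixed first/higher-order nonlinearity, using the uniform estimates of \Cref{l1: sec2-3,l1: sec2-5}.

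\emph{Step 1: the constrained sequence is a free Palais--Smale sequence.} Since $\|I'_{p_n}|_{\mathcal N_{p_n}}(u_n)\|_*\to0$, there are multipliers $\lambda_n$ with $I'_{p_n}(u_n)-\lambda_n\varphi_{p_n}'(u_n)\to0$ in $H^{-1}$. We first bound $u_n$. Because $\varphi_{p_n}(u_n)=0$, estimate \eqref{l1: eq2-5} holds with $d_2=0$, and the argument of \Cref{l1: sec2-5} gives a uniform $H^1$ bound together with bounds on all logarithmic moments.

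We then test the relation against $u_n$. This gives $\lambda_n\langle\varphi_{p_n}'(u_n),u_n\rangle\to0$, and
\[
-\langle\varphi_{p_n}'(u_n),u_n\rangle=\int\bigl[\beta+m\gamma L_{p_n}(u_n)^{m-1}\bigr]|u_n|^{p_n}\ge\beta\int|u_n|^{p_n}.
\]
The right-hand side is bounded below because $u_n$ cannot vanish: $c_{p_n}\ge\eta>0$ by \Cref{l1: sec2-6}, and Nehari elements are bounded away from $0$ uniformly in $n$. Hence $\lambda_n\to0$. Since $\varphi_{p_n}'(u_n)$ is bounded in $H^{-1}$, again by \Cref{l1: sec2-3}, we conclude $I'_{p_n}(u_n)\to0$.

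\emph{Step 2: extract $\widetilde u$ and pass to the limit in the equation.} Up to a subsequence, $u_n\rightharpoonup\widetilde u$ in $H^1$ and $u_n\to\widetilde u$ a.e. and in $L^2_{loc}$. For $\varphi\in C_c^\infty$ we pass to the limit in $\langle I'_{p_n}(u_n),\varphi\rangle$:
\begin{itemize}
\item On $\{|u_n|>1\}$ we use the uniform bound $C|u_n|^{q-1}$ from \Cref{l1: sec2-3} and Vitali's theorem.
\item On $\{|u_n|\le1\}$ we use $|L_p(s)|\le|\ln s|$ together with the uniform integrability of $s|\ln s|^m$ near $0$ on compact sets.
\end{itemize}
This shows that $\widetilde u$ is a weak critical point of $I$. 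Fatou's lemma applied to the bounded moments gives $\widetilde u\in\mathcal D$.

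\emph{Step 3: iteration.} Set $v_n^1=u_n-\widetilde u$. The key identities are Brezis--Lieb-type splittings for $I_{p_n}$ and $I'_{p_n}$:
\begin{gather*}
I_{p_n}(u_n)=I(\widetilde u)+I_{\infty,p_n}(v_n^1)+o(1),\\
I'_{\infty,p_n}(v_n^1)\to0.
\end{gather*}
Here $I_{\infty,p}$ is $I_p$ with $V$ replaced by $1$. The replacement is justified because $v_n^1\rightharpoonup0$, $V\to1$ at infinity, and the $\widetilde u$-part of the cross terms lives on bounded regions.

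If $\sup_y\int_{B_1(y)}|v_n^1|^2\to0$, Lions' lemma gives $v_n^1\to0$ in $L^{q}$. Combined with $\langle I'_{\infty,p_n}(v_n^1),v_n^1\rangle\to0$ and the favorable sign of both logarithmic terms on $\{|v|\le1\}$, this yields $v_n^1\to0$ in $H^1$, so $k=0$.

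Otherwise there are $y_n^1$ with $|y_n^1|\to\infty$ and $v_n^1(\cdot+y_n^1)\rightharpoonup X^1\ne0$. As in Step 2, $X^1$ is a weak solution of \eqref{l1: eq2-3}. Each such profile has $I_\infty(X^i)\ge c_\infty>0$, since it lies on $\mathcal N_\infty$ after the same self-testing used in \Cref{l1: sec2-1}. Combined with the energy bound $d$, this forces termination after finitely many steps. Orthogonality $|y_n^i-y_n^j|\to\infty$ follows in the usual way, and the energy identity $d=I(\widetilde u)+\sum_iI_\infty(X^i)$ follows by summing the splittings and using $I_{p_n}\to I$ and $I_{\infty,p_n}\to I_\infty$ on fixed profiles, as in Step 2 of \Cref{l1: sec2-6}.

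\emph{Main obstacle.} The hard part is the Brezis--Lieb splitting of the nonlinear primitive $\int\int_0^u L_{p_n}(t)^m t\,dt$ uniformly as $p_n\to2^+$, and in particular near $u=0$, where $L_p^m$ behaves like $|\ln s|^m$ and is not Lipschitz.

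My first attempt would be to split at a small level $\delta$:
\begin{itemize}
\item On $\{|u|>\delta\}$ the nonlinearity is uniformly $C^1$-controlled in $p$, so the standard splitting applies.
\item On $\{|u|\le\delta\}$ the primitive has a sign and is bounded by $C s^2|\ln s|^m$, which tends to $0$ uniformly as $\delta\to0$ only after using the uniform moment bound from \Cref{l1: sec2-5} in the form $s^2|\ln s|^m\le\delta^{\epsilon}s^{2-\epsilon}$-type interpolation. This last point may need care, since $L^{2-\epsilon}$ is not controlled on $\mathbb R^N$.
\end{itemize}
If that interpolation fails, I would instead use a convexity argument: the primitive is convex on $[0,\delta]$ for the sign given by odd $m$, which yields one-sided inequalities sufficient for the energy identity.
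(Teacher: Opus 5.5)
Your architecture is the right one: uniform bounds, weak limit $\widetilde u$, Lions' lemma, identification of profiles through local convergence, and the favorable sign of the nonlinearity on $\{|u|\le1\}$. Two steps, however, do not go through.

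\emph{First gap: Step~1.} You pass from $\lambda_n\to0$ to $I'_{p_n}(u_n)\to0$ in $H^{-1}$ by claiming that $\varphi'_{p_n}(u_n)$ is bounded in $H^{-1}$ ``by \Cref{l1: sec2-3}''. That lemma only controls $s>1$. On $\{|u_n|\le1\}$ the derivative contains $L_{p_n}(|u_n|)^m u_n$, where $L_p(s)=\frac{s^{p-2}-1}{p-2}$. The only $p$-uniform bound there is $|L_p(s)|\le|\ln s|$, so the natural estimate needs $\int u_n^2|\ln|u_n||^{2m}$, while you only control the $m$-th moment. Spreading tails $\varepsilon_n\phi(x/R_n)$ with $\varepsilon_n^2R_n^N|\ln\varepsilon_n|^m=1$ and $(p_n-2)|\ln\varepsilon_n|\to0$ satisfy every a priori bound you have. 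Yet they pair with the $H^1$-bounded functions $R_n^{-N/2}\phi(x/R_n)$ to give something of order $|\ln\varepsilon_n|^{m/2}$.

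What survives is only the local, translation-uniform statement $\sup_y|\langle I'_{p_n}(u_n),\psi(\cdot-y)\rangle|\to0$ for fixed $\psi\in C_c^\infty(\R^N)$. The paper derives this from $s|L_p(s)|^m\le C$ on $(0,1]$. It suffices to identify $\widetilde u$ and the $X^i$ as weak solutions. It does not suffice for your Step~3, which tests $I'_{\infty,p_n}(v_n^1)$ against $v_n^1$ itself in the vanishing case and relies on approximate criticality of each remainder to stop the iteration by energy.

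\emph{Second, more serious gap: the energy identity.} This is the obstacle you flag, and neither fallback closes it. The moment bound gives boundedness, not smallness, of $\int_{\{|u_n|\le\delta\}}u_n^2|L_{p_n}(|u_n|)|^m$; the same tails show this. So small-amplitude mass escaping to infinity can carry energy of order one while tending to $0$ in $H^1$. Hence $v_n\to0$ in $H^1$ would not give $I_{\infty,p_n}(v_n)\to0$, and a sign or convexity argument yields only $d\ge I(\widetilde u)+\sum_iI_\infty(X^i)$.

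The missing idea is to use the \emph{exact} Nehari identity of $u_n$. Put $H_p(s)=[\beta L_p(s)+\gamma L_p(s)^m]s^2$.
\begin{enumerate}
\item The facts $H_p\le0$ on $[0,1]$, $H_p^+\le Cs^q$, $v_n\to0$ in $L^q$, and Vitali's theorem on $\Omega_{n,R}=B_R(0)\cup\bigcup_iB_R(y_n^i)$ give $\limsup_n\int H_{p_n}(|u_n|)\le\int H_0(|\widetilde u|)+\sum_i\int H_0(|X^i|)$.
\item Insert this into $\int(\alpha|\nabla u_n|^2+Vu_n^2)=\int H_{p_n}(|u_n|)$ and use the self-testing identities for $\widetilde u$ and $X^i$. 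This forces $v_n\to0$ in $H^1$.
\item The same identity then turns the inequality into an equality, which gives $\lim_R\limsup_n\int_{\Omega_{n,R}^c}|H_{p_n}(|u_n|)|=0$.
\item On $\mathcal N_{p_n}$ one has $I_{p_n}(u_n)=\frac12\int K_{p_n}(|u_n|)$, with $K_p(s)=\frac{\beta}{p}s^p+m\gamma\int_0^sL_p(t)^{m-1}t^{p-1}\,dt$ and $0\le K_p(s)\le C(s^2+|H_p(s)|)$. This transfers the tail control to the energy and yields $d=I(\widetilde u)+\sum_iI_\infty(X^i)$.
\end{enumerate}
The paper also ends the extraction through $H^1$-orthogonality and a uniform lower bound on $\|X\|_{H^1}$ for nontrivial autonomous solutions. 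That way no intermediate energy splitting is needed.
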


\noindent
{\bf Proof.}
In what follows, we divide the proof into the following six steps.

\smallskip
\noindent
\emph{Step 1. Uniform boundedness and local Palais‑Smale condition.}

Since $u_n\in\mathcal N_{p_n}$, we have $\varphi_{p_n}(u_n)=\langle I_{p_n}'(u_n),u_n\rangle=0$. Moreover, since $I_{p_n}(u_n)\to d$, there exists $d_1>0$ such that $I_{p_n}(u_n)\le d_1$ for all large enough $n$. Indeed, we can obtain a similar result with (\ref{l1: eq2-5}), that is, 
$$
2d_1
\ge2I_{p_n}(u_n)-\varphi_{p_n}(u_n)
=\frac{\beta}{p_n}\int_{\mathbb R^N}|u_n|^{p_n}dx
+m\gamma\int_{\mathbb{R}^N}\int_0^{u_n}\left(\frac{|t|^{p_n-2}-1}{p_n-2}\right)^{m-1}|t|^{p_n-2}tdtdx.
$$
Using the method for deriving conclusions in \Cref{l1: sec2-5}, we can obtain
\begin{equation}
	\label{l1: eq2-8}
	\sup_n\left(\|u_n\|_{H^1}+\int_{\mathbb R^N}|u_n|^2\left|\frac{|u_n|^{p_n-2}-1}{p_n-2}\right|^m dx
	+\int_{\mathbb R^N}\left|\frac{|u_n|^{p_n}-|u_n|^2}{p_n-2}\right|dx\right)\le 3C(d_1)<\infty.
\end{equation}
Moreover, since $u_n\in\mathcal N_{p_n}$, by the definition of $c_{p_n}$ and \Cref{l1: sec2-6}, one has
$$
d=\lim_{n\to\infty}I_{p_n}(u_n)\ge\liminf_{n\to\infty}c_{p_n}\ge\liminf_{p\to2^+}c_{p}>0.
$$

We first establish a pointwise estimate for the primitive term $\int_0^s\left(\frac{|t|^{p-2}-1}{p-2}\right)^{m-1}|t|^{p-2}tdt$ appearing in \Cref{l1: sec2-5}, which will be used to compare $2I_{p_n}(u_n)$ with the transversality quantity $-\langle\varphi_{p_n}'(u_n),u_n\rangle$. Indeed, according to \Cref{l1: sec2-5}, we know that the primitive term is even and nonnegative. For $0<s\le1$, making the change of variables $t=sr$, using $C_p$ inequality, (\ref{l1: eq2-1}) and some simple transformations, we obtain
\begin{eqnarray*}
	\int_0^s\left(\frac{|t|^{p-2}-1}{p-2}\right)^{m-1}|t|^{p-2}tdt
	& =  &\int_0^1\left|\frac{|sr|^{p-2}-1}{p-2}\right|^{m-1}|sr|^{p-1}srd(sr)\\
	& =  &|s|^p\int_0^1\left|\frac{|s|^{p-2}-1}{p-2}+|s|^{p-2}\frac{|r|^{p-2}-1}{p-2}\right|^{m-1}|r|^{p-2}rdr\\
	& \le&C_{m-1}|s|^p\int_0^1\left[\left(\left|\frac{|s|^{p-2}-1}{p-2}\right|^{m-1}
	+|s|^{(p-2)(m-1)}\left|\frac{|r|^{p-2}-1}{p-2}\right|^{m-1}\right)|r|^{p-2}rdr\right]\\
	& \le&C_{m-1}|s|^p\left(\left|\frac{|s|^{p-2}-1}{p-2}\right|^{m-1}\int_0^1|r|^{p-2}rdr
	+\int_0^1\left|\frac{|r|^{p-2}-1}{p-2}\right|^{m-1}|r|^{p-2}rdr\right)\\
	& \le&C_{m-1}|s|^p\left(\left|\frac{s^{p-2}-1}{p-2}\right|^{m-1}
	+\int_0^1\left|\frac{|r|^{p-2}-1}{p-2}\right|^{m-1}|r|^{p-2}rdr\right)\\
	& \le&C_{m-1}|s|^p\left(\left|\frac{|s|^{p-2}-1}{p-2}\right|^{m-1}
	+\int_0^1|\ln r|^{m-1}rdr\right)\\
	& =  &C_{m-1}|s|^p\left(\left|\frac{|s|^{p-2}-1}{p-2}\right|^{m-1}
	+\frac{(m-1)!}{2^m}\right).
\end{eqnarray*}
For $s>1$, the function $\frac{|t|^{p-2}-1}{p-2}$ is nonnegative and increasing on $(1,s]$. Based on the estimate established above for $0<s\leq1$, we have
\begin{eqnarray*}
	\int_0^s\left(\frac{|t|^{p-2}-1}{p-2}\right)^{m-1}|t|^{p-2}tdt
	& =  &\int_0^1\left(\frac{|t|^{p-2}-1}{p-2}\right)^{m-1}|t|^{p-2}tdt
	+\int_1^s\left(\frac{|t|^{p-2}-1}{p-2}\right)^{m-1}|t|^{p-2}tdt\\
	& \le&C_{m-1}\frac{(m-1)!}{2^m}+\left(\frac{|s|^{p-2}-1}{p-2}\right)^{m-1}\int_1^s|t|^{p-2}tdt\\
	& \le&C_{m-1}\frac{(m-1)!}{2^m}+\left(\frac{|s|^{p-2}-1}{p-2}\right)^{m-1}|s|^p.
\end{eqnarray*}
Therefore, for every $2<p<p_0$ and every $s>0$, there exists constant $C_4$, depend on $m$, such that
\begin{equation}
	\label{l1: eq2-9}
	0\le\int_0^s\left(\frac{|t|^{p-2}-1}{p-2}\right)^{m-1}|t|^{p-2}tdt
	\le C_4C_{m-1}|s|^p\left(1+\left|\frac{|s|^{p-2}-1}{p-2}	\right|^{m-1}\right).
\end{equation}

By the definition of the restricted derivative, there exist
$\lambda_n\in\mathbb R$ such that
\begin{equation}
	\label{l1: eq2-10}
	R_n:=I_{p_n}'(u_n)-\lambda_n\varphi_{p_n}'(u_n)\longrightarrow0\qquad\hbox{in }H^{-1}(\mathbb R^N).
\end{equation}
Using the definition of Gateaux derivative and $\varphi_p$, there exists
\begin{eqnarray*}
	\langle\varphi_{p_n}'(u_n),u_n\rangle
	&=& 2\int_{\R^N}\alpha |\nabla u_n|^2dx+2\int_{\R^N}V(x)u_n^2dx-2\beta\int_{\R^N}\frac{|u_n|^{p_n}-u_n^2}{p_n-2}dx
	-\beta\int_{\R^N}|u_n|^{p_n}dx\\
	& &-2\gamma\int_{\R^N} \left(\frac{|u_n|^{p_n-2}-1}{p_n-2}\right)^m u_n^2dx
	-m\gamma\int_{\R^N} \left(\frac{|u_n|^{p_n-2}-1}{p_n-2}\right)^{m-1} |u_n|^pdx\\
	&=&2\varphi_{p_n}-\beta\int_{\R^N}|u_n|^{p_n}dx-m\gamma\int_{\R^N} \left(\frac{|u_n|^{p_n-2}-1}{p_n-2}\right)^{m-1} |u_n|^pdx
\end{eqnarray*}
Let
\begin{equation*}
	D_n:=\int_{\mathbb R^N}\left[\beta+m\gamma\left(\frac{|u_n|^{p_n-2}-1}{p_n-2}\right)^{m-1}\right]|u_n|^{p_n}dx.
\end{equation*}

By \eqref{l1: eq2-5} and \eqref{l1: eq2-9}, for every $u\in\mathcal N_p$, there exists constant $C_5$ such that 
$$
2I_{p_n}(u_n)-\varphi_{p_n}(u_n)
\leq C_5\int_{\mathbb R^N}\left[\beta+m\gamma\left|\frac{|u_n|^{p_n-2}-1}{p_n-2}\right|^{m-1}\right]|u_n|^{p_n}dx=C_5D_n.
$$
Consequently, according to assumptions, we have
\[
D_n\ge\frac{2I_{p_n}(u_n)}{C_5}\longrightarrow \frac{2d}{C_5}>0.
\]
Applying $u_n$ to both sides of the above equation \eqref{l1: eq2-10} and using $\langle I_{p_n}'(u_n),u_n\rangle=0$, we obtain
\begin{equation*}
	\lambda_n\longrightarrow0.
\end{equation*}
Let $\psi\in C_c^\infty(\mathbb R^N)$ and set $K:=\operatorname{supp}\psi$.
For $y\in\mathbb R^N$, define $\psi_y(x):=\psi(x-y)$. By
\eqref{l1: eq2-10},
\[
\langle I_{p_n}'(u_n),\psi_y\rangle
=\langle R_n,\psi_y\rangle+\lambda_n\langle\varphi_{p_n}'(u_n),\psi_y\rangle.
\]
Since translations preserve the $H^1$ norm, it is enough to prove that
\[
\sup_{y\in\mathbb R^N}
\left|\langle\varphi_{p_n}'(u_n),\psi_y\rangle\right|
\]
is uniformly bounded with respect to $n$.

For $2<p<p_0$, a direct differentiation gives
\begin{eqnarray*}
	\langle\varphi_{p_n}'(u_n),\psi_y\rangle
	&=&2\int_{\mathbb R^N}\left(\alpha\nabla u_n\cdot\nabla\psi_y+V(x)u_n\psi_y\right)dx
	-2\beta\int_{\mathbb R^N}\frac{|u_n|^{p_n-2}-1}{p_n-2}u_n\psi_y\,dx\\
	& &-2\gamma\int_{\mathbb R^N}\left(\frac{|u_n|^{p_n-2}-1}{p_n-2}\right)^mu_n\psi_y\,dx
	-\int_{\mathbb R^N}\left(\beta+m\gamma\left(\frac{|u_n|^{p_n-2}-1}{p_n-2}\right)^{m-1}\right)|u_n|^{p_n-2}u_n\psi_y\,dx.
\end{eqnarray*}

Taking absolute values and absorbing the fixed coefficients into the
constants, the nonlinear terms are controlled by
\[
s\left(1+\left|\frac{s^{p-2}-1}{p-2}\right|^m\right)
+s^{p-1}\left(1+\left|\frac{s^{p-2}-1}{p-2}\right|^{m-1}\right),\qquad s>0.
\]
For $0<s\le1$,
\[
\left|\frac{s^{p-2}-1}{p-2}\right|\le |\ln s|,\qquad s^{p-1}\le s,
\]
and hence, for every fixed $\varepsilon\in(0,1)$,
\[
s\left(1+\left|\frac{s^{p-2}-1}{p-2}\right|^m\right)
+s^{p-1}\left(1+\left|\frac{s^{p-2}-1}{p-2}\right|^{m-1}\right)
\le C_6s^\varepsilon.
\]
For $s>1$, by \Cref{l1: sec2-3} and $m(p-2)\le q-2$,
\[
\left|\frac{s^{p-2}-1}{p-2}\right|^{m-1}
\le C_{\bar p,m}^{\frac{m-1}{m}}s^{\frac{(q-2)(m-1)}{m}},
\]
and, since
\[
p-1+\frac{(q-2)(m-1)}m\le q-1,
\]
we obtain
\[
s\left(1+\left|\frac{s^{p-2}-1}{p-2}\right|^m\right)
+s^{p-1}\left(1+\left|\frac{s^{p-2}-1}{p-2}\right|^{m-1}\right)
\le C_7s^{q-1}.
\]
Therefore, using the H\"older inequality, the global boundedness of
$\{u_n\}$ in $H^1(\mathbb R^N)$, and the translation invariance of the
norms of $\psi_y$, we have
\begin{align*}
	\left|\langle\varphi_{p_n}'(u_n),\psi_y\rangle\right|
	\le{}&2\alpha\|\nabla u_n\|_{L^2}\|\nabla\psi\|_{L^2}
	+2V^*\|u_n\|_{L^2}\|\psi\|_{L^2}\\
	&+C_6\int_{K+y\cap\{|u_n|\le1\}}|\psi_y|\,dx
	+C_7\int_{K+y\cap\{|u_n|>1\}}|u_n|^{q-1}|\psi_y|\,dx\\
	\le{}&C\|u_n\|_{H^1}\|\psi\|_{H^1}
	+C_6|K|\|\psi\|_{L^\infty}
	+C_7\|u_n\|_{L^q}^{q-1}\|\psi\|_{L^q}
	\le C_8(\psi),
\end{align*}
where $C_8(\psi)>0$ is independent of $n$ and $y$. Consequently,
\begin{align*}
	\sup_{y\in\mathbb R^N}
	\left|\langle I_{p_n}'(u_n),\psi_y\rangle\right|
	\le \|R_n\|_{H^{-1}(\mathbb R^N)}\|\psi\|_{H^1(\mathbb R^N)}
	+|\lambda_n|C_8(\psi)\longrightarrow0.
\end{align*}
Thus,
\begin{equation}
	\label{l1: eq2-13}
	\sup_{y\in\mathbb R^N}
	\left|\langle I_{p_n}'(u_n),\psi(\cdot-y)\rangle\right|
	\longrightarrow0
	\qquad\text{for every }\psi\in C_c^\infty(\mathbb R^N).
\end{equation}
In particular, taking $y=0$ gives the local Palais--Smale condition for every fixed test function.

\smallskip
\noindent
\emph{Step 2. Profile extraction and identification of the profiles.}

Since $\{u_n\}$ is bounded in $H^1(\mathbb R^N)$, there exist a
subsequence of $\{u_n\}$, still denoted by $\{u_n\}$, and a function
$\widetilde u\in H^1(\mathbb R^N)$ such that
\begin{eqnarray*}
	\begin{cases}
		u_n\rightharpoonup\widetilde u
		&\quad\text{in }H^1(\mathbb R^N),\\
		u_n(x)\to\widetilde u(x)
		&\quad\text{for a.e. }x\in\mathbb R^N,\\
		u_n\to\widetilde u
		&\quad\text{in }L^s_{\mathrm{loc}}(\mathbb R^N),
		\;\;s\in(2,2^*).
	\end{cases}
\end{eqnarray*}

For a.e. $x\in\mathbb R^N$ with $\widetilde u(x)\neq0$, since
$p_n\to2^+$ and $u_n(x)\to\widetilde u(x)$, one has
\[
|u_n|^2\left|\frac{|u_n|^{p_n-2}-1}{p_n-2}\right|^m\longrightarrow|\widetilde u|^2|\ln|\widetilde u||^m.
\]
If $\widetilde u(x)=0$, its total value is zero. Hence, by Fatou lemma and \eqref{l1: eq2-8},
\begin{equation}
	\label{l1: eq2-14}
	\int_{\mathbb R^N}|\widetilde u|^2|\ln|\widetilde u||^m dx
	\leq\liminf_{n\to\infty}\int_{\mathbb R^N}|u_n|^2\left|\frac{|u_n|^{p_n-2}-1}{p_n-2}\right|^m dx<\infty.
\end{equation}
Therefore, $\widetilde u\in\mathcal D$.

Next, we show that $\widetilde u$ is a weak solution of (\ref{l1: eq1-1}). By the estimates established in Step 1, for every compact set $K\subset\mathbb R^N$, the sequence  $\left[\beta\frac{|u_n|^{p_n-2}-1}{p_n-2}+\gamma\left(\frac{|u_n|^{p_n-2}-1}{p_n-2}\right)^m\right]u_n$ is  uniformly integrable on $K$. Moreover, based on the analysis of $\widetilde u\in\mathcal D$, we can see that the nonlinear terms converges almost everywhere to the logarithmic terms in \eqref{l1: eq2-3}. Therefore, by the Vitali theorem \cite{Brezis2011},
%定义在可测集E上的可积函数列{fn(x)}满足两种收敛特性：依测度收敛于极限函数f(x)，同时其积分具有等度绝对连续性,极限函数f(x)的可积性得到保证，且原函数列的积分值将收敛于该极限函数的积分%
\begin{equation}
	\label{l1: eq2-15}
	\left[\beta\frac{|u_n|^{p_n-2}-1}{p_n-2}+\gamma\left(\frac{|u_n|^{p_n-2}-1}{p_n-2}\right)^m\right]u_n
	\longrightarrow
	\left[\beta\ln|\widetilde u|+\gamma(\ln|\widetilde u|)^m\right]\widetilde u
\end{equation}
in $L^1_{\mathrm{loc}}(\mathbb R^N)$.

We now pass to the limit in the weak equation. It follows from the definition of $\varphi_{p_n}$ and \eqref{l1: eq2-13} that
\begin{align*}
	\alpha\int_{\mathbb R^N}\nabla u_n\cdot\nabla\psi\,dx+\int_{\mathbb R^N}V(x)u_n\psi\,dx
	-\int_{\mathbb R^N}\left[\beta\frac{|u_n|^{p_n-2}-1}{p_n-2}
	+\gamma\left(\frac{|u_n|^{p_n-2}-1}{p_n-2}\right)^m\right]u_n\psi\,dx\longrightarrow0.
\end{align*}

Since
\[
u_n\rightharpoonup\widetilde u\qquad\text{in }H^1(\mathbb R^N),
\]
we have
\[
\int_{\mathbb R^N}\nabla u_n\cdot\nabla\psi\,dx\longrightarrow\int_{\mathbb R^N}\nabla\widetilde u\cdot\nabla\psi\,dx.
\]
Moreover, since $u_n\to\widetilde u$ in $L^2_{\mathrm{loc}}(\mathbb R^N)$ and $V\in L^\infty(\mathbb R^N)$,
\[
\int_{\mathbb R^N}V(x)u_n\psi\,dx\longrightarrow\int_{\mathbb R^N}V(x)\widetilde u\psi\,dx.
\]
Finally, by \eqref{l1: eq2-15} and the boundedness of $\psi$,
\begin{align*}
	\int_{\mathbb R^N}\left[\beta\frac{|u_n|^{p_n-2}-1}{p_n-2}
	+\gamma\left(\frac{|u_n|^{p_n-2}-1}{p_n-2}\right)^m	\right]u_n\psi\,dx\longrightarrow
	\int_{\mathbb R^N}\left[\beta\ln|\widetilde u|+\gamma(\ln|\widetilde u|)^m\right]\widetilde u\psi\,dx.
\end{align*}

Hence,
\[
\alpha\int_{\mathbb R^N}
\nabla\widetilde u\cdot\nabla\psi\,dx
+\int_{\mathbb R^N}
V(x)\widetilde u\psi\,dx
-
\int_{\mathbb R^N}
\left[
\beta\ln|\widetilde u|
+\gamma(\ln|\widetilde u|)^m
\right]\widetilde u\psi\,dx
=0
\]
for every $\psi\in C_c^\infty(\mathbb R^N)$. Therefore,
$\widetilde u$ is a weak solution of (\ref{l1: eq1-1}).

Before extracting the profiles at infinity, we first record that
every weak solution $U\in\mathcal D$ of either (\ref{l1: eq1-1}) or
(\ref{l1: eq2-3}) can be tested by itself. Indeed, let $T_M$ be the
standard truncation. Choose $\chi\in C_c^\infty(\mathbb R^N)$ such
that
\[
0\leq\chi\leq1,\qquad \chi=1\ \text{in }B_1(0),\qquad \chi=0\ \text{outside }B_2(0),
\]
and define
\[
\chi_R(x):=\chi\left(\frac{x}{R}\right).
\]
Then
\[
0\leq\chi_R\leq1,\qquad \chi_R=1\ \text{in }B_R(0),\qquad \chi_R=0\ \text{outside }B_{2R}(0),
\]
and
\[
|\nabla\chi_R(x)|=\frac{1}{R}|\nabla \chi\left(\frac{x}{R}\right)|\leq\frac{\max|\nabla\chi|}{R}.
\]
The function $\chi_R^2T_M(U)$ is an admissible
$H_0^1(B_{2R})$ test function by the standard approximation
argument. Hence,
\begin{align*}
	&\alpha\int_{\mathbb R^N}\chi_R^2T_M'(U)|\nabla U|^2dx
	+2\alpha\int_{\mathbb R^N}\chi_RT_M(U)\nabla U\cdot\nabla\chi_R\,dx+\int_{\mathbb R^N}	W(x)U\chi_R^2T_M(U)\,dx\\
	&=\int_{\mathbb R^N}\left[\beta\ln|U|+\gamma(\ln|U|)^m\right]U\chi_R^2T_M(U)\,dx.
\end{align*}
Since $U\in\mathcal D$,
\[
U^2\left(1+|\ln|U||^m\right)\in L^1(\mathbb R^N).
\]
Thus, letting $M\to\infty$, we obtain
\begin{align*}
	\alpha\int_{\mathbb R^N}\chi_R^2|\nabla U|^2dx+\int_{\mathbb R^N}W(x)\chi_R^2U^2dx
	+2\alpha\int_{\mathbb R^N}\chi_RU\nabla U\cdot\nabla\chi_R\,dx
	=\int_{\mathbb R^N}\left[\beta\ln|U|+\gamma(\ln|U|)^m\right]U^2\chi_R^2\,dx.
\end{align*}
Moreover, using the definition of $\chi_R$ and Cauchy-Schwarz inequality, we have
\begin{eqnarray*}
	\left|\int_{\mathbb R^N}\chi_RU\nabla U\cdot\nabla\chi_R\,dx\right|
	&\leq& \int_{\mathbb R^N}\left|\chi_RU\nabla U\cdot\nabla\chi_R\right|\,dx\\
	&\leq& \frac{\max|\nabla\chi|}{R}\int_{\mathbb R^N}\left|U\nabla U\right|\,dx
	=\frac{\max|\nabla\chi|}{R}\|U\|_{L^2}\|\nabla U\|_{L^2}\longrightarrow0,\;\;\text{as}\;\;R\to \infty.
\end{eqnarray*}
Therefore, letting $R\to\infty$, we obtain
\[
\int_{\mathbb R^N}\left(\alpha|\nabla U|^2+W(x)U^2\right)dx
=\int_{\mathbb R^N}\left[\beta\ln|U|+\gamma(\ln|U|)^m\right]U^2\,dx,
\]
where $W(x)=V(x)$ for (\ref{l1: eq1-1}) and $W(x)=1$ for (\ref{l1: eq2-3}).
In particular, since $\widetilde u$ is a weak solution of
(\ref{l1: eq1-1}),
\begin{equation*}
	\int_{\mathbb R^N}\left(\alpha|\nabla\widetilde u|^2+V(x)\widetilde u^2\right)dx
	=\int_{\mathbb R^N}\left[\beta\ln|\widetilde u|+\gamma(\ln|\widetilde u|)^m\right]\widetilde u^2\,dx.
\end{equation*}

Set 
$$
z_n^0:=u_n-\widetilde u. 
$$
We distinguish the following two cases.

\medskip
\noindent
\emph{Case 1. Vanishing occurs.}
Suppose that
\begin{equation}
	\label{l1: eq2-17}
	\sup_{y\in\mathbb R^N}\int_{B_1(y)}|z_n^0|^2\,dx\longrightarrow0.
\end{equation}
Since $\{z_n^0\}$ is bounded in $H^1(\mathbb R^N)$, applying the Lemma I.1 in Lions' concentration--compactness principle \cite{Lions1984-2} with $p=q=2$ and $R=1$, yields
\[
z_n^0\longrightarrow0
\qquad\text{in }L^s(\mathbb R^N),
\qquad 2<s<2^*.
\]
In this case, no profile at infinity is required.

\medskip
\noindent
\emph{Case 2. Vanishing does not occur.}
Suppose that \eqref{l1: eq2-17} does not hold. Then, up to a subsequence, there exists $\eta>0$ such that $\sup\limits_{y\in\mathbb R^N}\int_{B_1(y)}|z_n^0|^2\,dx\geq \eta$ for every $n$, which implies that we may choose $y_n^1\in\mathbb R^N$ such that $\int_{B_1(y_n^1)}|z_n^0|^2\,dx\geq\eta$. 

Assume $\{y_n^1\}$ were bounded, then there would exist $R_0>0$ such that $B_1(y_n^1)\subset B_{R_0}$ for all $n$. According to the boundedness of $\{z_n^0\}\in H^1(\mathbb R^N)$ and the local compactness of embedding, one has $z_n^0\longrightarrow0\;\;\text{in }\;\;L^2(B_{R_0})$, and hence
\[
\int_{B_1(y_n^1)}|z_n^0|^2\,dx\longrightarrow0,
\]
which contradicts with the assumption. Therefore,
\[
|y_n^1|\longrightarrow\infty.
\]

Since $\{z_n^0(x+y_n^1)\}$ is bounded in
$H^1(\mathbb R^N)$, there exist a subsequence of $\{z_n^0(x+y_n^1)\}$, still denoted by $\{z_n^0(x+y_n^1)\}$, and a function $X^1\in H^1(\mathbb R^N)$ such that
\[
z_n^0(x+y_n^1)\rightharpoonup X^1\quad\text{in }H^1(\mathbb R^N)\qquad\text{and}\qquad
z_n^0(x+y_n^1)\longrightarrow X^1\quad\text{in }L^2_{\mathrm{loc}}(\mathbb R^N).
\]
Consequently, let $\xi=x+y_n^1$, then by the relationship of $y_n^1$ and $\eta$, we have
\[
\int_{B_1}|X^1(x)|^2dx=\lim_{n\to\infty}\int_{B_1}|z_n^0(x+y_n^1)|^2dx
=\lim_{n\to\infty}\int_{B_1(y_n^1)}|z_n^0(\xi)|^2d\xi\geq\eta,
\]
and hence $X^1\neq0$.

For every $\psi\in C_c^\infty(\mathbb R^N)$, taking
$\psi(x-y_n^1)$ in \eqref{l1: eq2-13} and making the change of
variables $\xi=x-y_n^1$, we obtain
\[
\begin{aligned}
	&\alpha\int_{\mathbb R^N}
	\nabla u_n(\xi+y_n^1)\cdot\nabla\psi(\xi)\,d\xi
	+\int_{\mathbb R^N}
	V(\xi+y_n^1)u_n(\xi+y_n^1)\psi(\xi)\,d\xi\\
	&\quad
	-\int_{\mathbb R^N}
	\left[
	\beta\frac{|u_n(\xi+y_n^1)|^{p_n-2}-1}{p_n-2}
	+\gamma
	\left(
	\frac{|u_n(\xi+y_n^1)|^{p_n-2}-1}{p_n-2}
	\right)^m
	\right]
	u_n(\xi+y_n^1)\psi(\xi)\,d\xi
	\longrightarrow0.
\end{aligned}
\]
Since $z_n^0:=u_n-\widetilde u$ and $\tilde{u}\in H^1(\mathbb{R}^N)$, there exists
\[
u_n(x+y_n^1)\rightharpoonup X^1
\quad\text{in }H^1(\mathbb R^N),
\]
\[
u_n(x+y_n^1)\to X^1
\quad\text{in }L^s_{\mathrm{loc}}(\mathbb R^N),
\qquad 2<s<2^*,
\]
and, by $(AC_1)$,
\[
V(x+y_n^1)\to1
\quad\text{uniformly on compact subsets of }\mathbb R^N.
\]
Consequently, according to the Vitali theorem \cite{Brezis2011} and above results, we have the following limit equation
\[
\alpha\int_{\mathbb R^N}\nabla X^1\cdot\nabla\psi\,dx+\int_{\mathbb R^N}X^1\psi\,dx
-\int_{\mathbb R^N}\left[\beta\ln|X^1|+\gamma(\ln|X^1|)^m\right]X^1\psi\,dx=0,\;\;\forall \psi\in C_c^\infty(\mathbb{R}^N).
\]
Hence, $X^1$ is a weak solution of the autonomous problem (\ref{l1: eq2-3}). Moreover,  by the translation invariance of the integrals in \eqref{l1: eq2-8} and Fatou's lemma, the same argument as in \eqref{l1: eq2-14} gives $\int_{\mathbb R^N}|X^1|^2|\ln|X^1||^m dx<\infty$. Then, we can derive $X^1\in\mathcal D$. 

Set
\[
z_n^1:=z_n^0-X^1(x-y_n^1).
\]
Repeating the above extraction procedure, we obtain nontrivial profiles $X^1,\ldots,X^j\in\mathcal D$, each of which is a weak solution of the autonomous problem, together with translations $y_n^1,\ldots,y_n^j$ satisfying
\[
|y_n^i|\to\infty,\qquad|y_n^i-y_n^\ell|\to\infty\qquad(i\neq\ell),
\]
and the usual Hilbert-space orthogonality gives
\begin{equation}
	\label{l1: eq2-18}
	\|u_n\|_{{H^1}(\mathbb{R}^N)}^2
	=\|\widetilde u\|_{{H^1}(\mathbb{R}^N)}^2+\sum_{i=1}^{j}\|X^i\|_{{H^1}(\mathbb{R}^N)}^2
	+\|z_n^j\|_{{H^1}(\mathbb{R}^N)}^2+o(1).
\end{equation}
%每抽出一个 profile，就会从总的 \(H^1\) 质量中真实地拿走一部分质量。%
We next prove that the above extraction terminates after finitely many steps. To show the results, it suffices to prove that every nontrivial autonomous profile has a uniform positive lower bound in $H^1(\mathbb R^N)$. Let $X\in\mathcal D$ be any nontrivial weak solution of (\ref{l1: eq2-3}). Applying the self-testing identity established above to $X$, we obtain
\begin{equation*}
	\int_{\mathbb R^N}\left(\alpha|\nabla X|^2+|X|^2\right)dx
	=\int_{\mathbb R^N}\left[\beta\ln|X|+\gamma(\ln|X|)^m\right]|X|^2dx.
\end{equation*}
Since $m$ is odd,
\[
\beta\ln s+\gamma(\ln s)^m\leq0,\qquad 0<s\leq1.
\]
Moreover, for the fixed $q\in(2,2^*)$ chosen above, there exists a constant $C_q>0$ such that
\[
\beta\ln s+\gamma(\ln s)^m\leq C_qs^{q-2},\qquad s>1.
\]
Hence, by the Sobolev embedding inequality,
\begin{eqnarray*}
	\min\{\alpha, 1\}\|X\|_{{H^1}(\mathbb{R}^N)}^2
	&\leq& \int_{\mathbb R^N}\left(\alpha|\nabla X|^2+|X|^2\right)dx\\
	& =  & \int_{\mathbb R^N}\left[\beta\ln|X|+\gamma(\ln|X|)^m\right]|X|^2dx\\
	&\leq& \int_{|X|>1}\left[\beta\ln|X|+\gamma(\ln|X|)^m\right]|X|^2dx\\
	&\leq& C_q\int_{\mathbb R^N}|X|^qdx
	\leq C_qC_1\|X\|_{{H^1}(\mathbb{R}^N)}^q.
\end{eqnarray*}
Since $X\neq0$ and $q>2$, we obtain
\[
\|X\|_{{H^1}(\mathbb{R}^N)}\geq\left[\frac{\min\{\alpha, 1\}}{C_qC_1}\right]^{\frac{1}{q-2}}>0.
\]
Therefore, every nontrivial autonomous profile has a uniform positive lower bound in $H^1(\mathbb R^N)$. Combining this with \eqref{l1: eq2-18} and the boundedness of $\{u_n\}$, if $j$ profiles have been extracted, then
$$
0<\sum_{i=1}^j\|X^i\|_{{H^1}(\mathbb{R}^N)}^2\leq \|u_n\|_{{H^1}(\mathbb{R}^N)}^2\leq C(d_1,d_2),
$$
which implies the extraction can produce only finitely many nontrivial profiles.

Hence, for some $k\geq0$, set
\[
v_n
:=
u_n-\widetilde u
-\sum_{i=1}^kX^i(x-y_n^i).
\]
By the maximum of the extraction, the remainder $v_n$ must satisfy
\[
\sup_{y\in\mathbb R^N}\int_{B_1(y)}|v_n|^2\,dx\to0;
\]
otherwise, the preceding extraction procedure would produce one more nontrivial profile, contradicting the maximum of $k$. It follows from the Lemma I.1 in Lions' concentration--compactness principle \cite{Lions1984-2} that
\[
v_n\longrightarrow0
\qquad\text{in }L^s(\mathbb R^N),
\qquad 2<s<2^*.
\]

\smallskip
\noindent
\emph{Step 3. A one-sided high-order logarithmic splitting inequality.}

For convenience, set
\[
H_n(s):=\left[\beta\frac{s^{p_n-2}-1}{p_n-2}+\gamma\left(\frac{s^{p_n-2}-1}{p_n-2}\right)^m\right]s^2,\qquad s\ge0,
\]
and
\[
H_0(s):=\bigl[\beta\ln s+\gamma(\ln s)^m\bigr]s^2,\qquad s>0,\qquad H_0(0):=0.
\]
The logarithmic nonlinearity does not satisfy the standard Brezis-Lieb splitting property directly. Therefore, instead of applying the classical splitting lemma, we establish the splitting property through the uniform integrability estimate and Vitali theorem \cite{Brezis2011}. We first present an estimate similar to the one given earlier, which will be needed in the proof that follows. For $0<s\leq1$, as in Step~1, for every fixed $\varepsilon\in(0,1)$,
\[
|H_n(s)|\leq C_6s^2\left(|\ln s|+|\ln s|^m\right)\leq C_6s^{1+\varepsilon}.
\]
For $s>1$, as in Step~2, 
$$
H_n^+(s)\leq (\beta+\gamma)C_{\bar p,m}s^q.
$$
Let $K\subset\mathbb R^N$ be bounded. Suppose that
\begin{eqnarray*}
	\begin{cases}
		z_n\to z\quad\text{a.e. in }K,\\
		z_n\to z\quad\text{in }L^2(K)\cap L^q(K).
	\end{cases}
\end{eqnarray*}
By the estimate established above, the sequence $\{H_n(|z_n|)\}$ is uniformly integrable on $K$. Moreover, since $H_n(|z_n|)\longrightarrow H_0(|z|)$ a.e. in $K$ when $p_n\to2^+$, by Vitali theorem \cite{Brezis2011}, there exists
\[
H_n(|z_n|)\longrightarrow H_0(|z|)\qquad\text{in }L^1(K).
\]

Fix $R>1$ and set
\[
\Omega_{n,R}
:=
B_R(0)\cup\bigcup_{i=1}^kB_R(y_n^i).
\]
Since
\[
|y_n^i|\to\infty,\qquad|y_n^i-y_n^\ell|\to\infty\quad(i\neq\ell),
\]
the components of $\Omega_{n,R}$ are mutually disjoint for all
sufficiently large $n$. Moreover, by the local convergences obtained
in Step~2,
\[
u_n\to\widetilde u \qquad\text{in }L^2(B_R(0))\cap L^q(B_R(0)),
\]
and, for every $i=1,\ldots,k$,
\[
u_n(x+y_n^i)\to X^i \qquad\text{in }L^2(B_R(0))\cap L^q(B_R(0)).
\]
Therefore, applying the local $L^1$ convergence established above on
each component of $\Omega_{n,R}$, we obtain
\[
\lim_{n\to\infty}\int_{\Omega_{n,R}}H_n(|u_n|)\,dx
=
\int_{B_R(0)}H_0(|\widetilde u|)\,dx+\sum_{i=1}^k\int_{B_R(0)}H_0(|X^i|)\,dx.
\]

On the complement, since $H_n\leq H_n^+$,
\[
\int_{\Omega_{n,R}^c}H_n(|u_n|)\,dx
\leq
\int_{\Omega_{n,R}^c}H_n^+(|u_n|)\,dx
\leq
(\beta+\gamma)C_{\bar p,m}\int_{\Omega_{n,R}^c}|u_n|^q\,dx.
\]
By the decomposition obtained in Step~2,
\[
u_n
=
\widetilde u+\sum_{i=1}^kX^i(x-y_n^i)+v_n,\qquad v_n\to0\quad\text{in }L^q(\mathbb R^N).
\]
Hence, using the generalization of the $C_p$ inequality and some simple transformations, we have
\begin{eqnarray*}
	\int_{\Omega_{n,R}^c}|u_n|^q\,dx
	& =  &\int_{\Omega_{n,R}^c}|\widetilde u+\sum_{i=1}^kX^i(x-y_n^i)+v_n|^q\,dx\\
	&\leq&(k+2)^{q-1}\left(\int_{\Omega_{n,R}^c}|\widetilde u|^qdx+\int_{\Omega_{n,R}^c}\sum_{i=1}^k|X^i(x-y_n^i)|^qdx
	+\int_{\Omega_{n,R}^c}|v_n|^q\,dx\right)\\
	&\leq&
	(k+2)^{q-1}\bigg(\int_{\mathbb R^N\setminus B_R(0)}|\widetilde u|^q\,dx
	+\sum_{i=1}^k\int_{\mathbb R^N\setminus B_R(0)}|X^i|^q\,dx+\int_{\mathbb{R}^N}|v_n|^q\,dx\bigg).
\end{eqnarray*}

Since $\widetilde u,X^1,\ldots,X^k\in H^1(\mathbb R^N) \hookrightarrow L^q(\mathbb R^N)$ and $v_n\to0$ in $L^q(\mathbb R^N)$, it follows that
\[
\lim_{R\to\infty}
\limsup_{n\to\infty}
\int_{\Omega_{n,R}^c}|u_n|^q\,dx
=0.
\]

Combining the estimates on $\Omega_{n,R}$ and
$\Omega_{n,R}^c$, and then letting $R\to\infty$, we obtain
\begin{equation*}
	\limsup_{n\to\infty}
	\int_{\mathbb R^N}H_n(|u_n|)\,dx
	\leq
	\int_{\mathbb R^N}H_0(|\widetilde u|)\,dx
	+
	\sum_{i=1}^k
	\int_{\mathbb R^N}H_0(|X^i|)\,dx.
\end{equation*}
\noindent
\emph{Step 4. Strong convergence of the remainder.}

By the orthogonality of the profiles and the decomposition obtained in Step 2, the gradient and unweighted $L^2$ parts split in the usual way. We also record the corresponding splitting for the potential term. Indeed, since $V\in L^\infty(\mathbb R^N)$ and, by $(AC_1)$,
\[
V(\cdot+y_n^i)\longrightarrow1
\qquad\text{locally uniformly in }\mathbb R^N,
\]
a standard cut-off argument yields
\[
\int_{\mathbb R^N}V(x)|X^i(x-y_n^i)|^2\,dx
\longrightarrow
\int_{\mathbb R^N}|X^i|^2\,dx.
\]
Moreover, all weighted cross terms between distinct profiles, between the profiles and $\widetilde u$, and between these functions and the remainder tend to zero, by the separation
$|y_n^i-y_n^j|\to\infty$, the weak orthogonality in the profile extraction, and the boundedness of $V$. Consequently, we have
\begin{eqnarray*}
	\int_{\mathbb R^N}\left(\alpha|\nabla u_n|^2+V(x)u_n^2\right)\,dx
	&=&	\int_{\mathbb R^N}\left(\alpha|\nabla \tilde u|^2+V(x)\tilde u^2\right)\,dx
	+\sum_{i=1}^{k}\int_{\mathbb R^N}\left(\alpha|\nabla X^i|^2+|X^i|^2\right)\,dx\nonumber\\
	& &	+\int_{\mathbb R^N}	\left(\alpha|\nabla v_n|^2+V(x)v_n^2\right)\,dx+o(1).
\end{eqnarray*}
Since \(u_n\in\mathcal N_{p_n}\), it satisfies
\[
\int_{\mathbb R^N}\left(\alpha|\nabla u_n|^2+V(x)u_n^2\right)\,dx=\int_{\mathbb R^N}H_n(|u_n|)\,dx .
\]

Moreover, the limit profile \(\tilde u\) and each \(X^i\) satisfy the
corresponding limiting equations. Therefore,
$$
\int_{\mathbb R^N}\left(\alpha|\nabla\tilde u|^2+V(x)\tilde u^2\right)\,dx=\int_{\mathbb R^N}H_0(|\tilde u|)\,dx
\quad\text{and}\quad
\int_{\mathbb R^N}\left(\alpha|\nabla X^i|^2+|X^i|^2\right)\,dx=\int_{\mathbb R^N}H_0(|X^i|)\,dx .
$$

Hence, combining the above decomposition with the estimate obtained in
Step 3, we obtain
\begin{eqnarray*}
	\limsup_{n\rightarrow\infty}\int_{\mathbb R^N}\left(\alpha|\nabla v_n|^2+V(x)v_n^2\right)\,dx
	&\leq& \limsup_{n\rightarrow\infty}\Bigg[\int_{\mathbb R^N}\left(\alpha|\nabla u_n|^2+V(x)u_n^2\right)\,dx
	-\int_{\mathbb R^N}\left(\alpha|\nabla \tilde u|^2+V(x)\tilde u^2\right)\,dx\\
	&    & -\sum_{i=1}^{k}\int_{\mathbb R^N}\left(\alpha|\nabla X^i|^2+|X^i|^2\right)\,dx\Bigg]\\
	&\leq& \limsup_{n\rightarrow\infty}\Bigg[\int_{\mathbb R^N}H_n(|u_n|)\,dx
	-\int_{\mathbb R^N}H_0(|\tilde u|)\,dx-\sum_{i=1}^{k}\int_{\mathbb R^N}H_0(|X^i|)\,dx\Bigg]\\
	&\leq&0 .
\end{eqnarray*}

Therefore,
\[
\int_{\mathbb R^N}\left(\alpha|\nabla v_n|^2+V(x)v_n^2\right)\,dx\rightarrow0 .
\]

Consequently,
\[
v_n\rightarrow0 \qquad\text{strongly in }H^1(\mathbb R^N).
\]

\noindent
\emph{Step 5. Splitting of the nonlinear terms.}

From the strong convergence results of $v_n$ in Step 4 and $u_n$ satisfies the Nehari identity $\langle I'_{p_n}(u_n),u_n\rangle=0$, for $n\to \infty$, together with the orthogonality of the profiles obtained in Step 2, this implies the splitting of the quadratic part
\[
\begin{aligned}
	\int_{\mathbb R^N}H_n(|u_n|)\,dx
	&=\int_{\mathbb R^N}\left(\alpha|\nabla u_n|^2+V(x)u_n^2\right)\,dx\\
	&\longrightarrow \int_{\mathbb R^N}\left(\alpha|\nabla\widetilde u|^2+V(x)\widetilde u^2\right)\,dx
	+\sum_{i=1}^{k}\int_{\mathbb R^N}\left(\alpha|\nabla X^i|^2+|X^i|^2\right)\,dx .
\end{aligned}
\]

Using the limiting equations satisfied by $\widetilde u$ and $X^i$ in Step 2, we obtain
\begin{equation}
	\label{l1: eq-H-splitting}
	\lim_{n\to\infty}
	\int_{\mathbb R^N}H_n(|u_n|)\,dx
	=
	\int_{\mathbb R^N}H_0(|\widetilde u|)\,dx
	+
	\sum_{i=1}^{k}
	\int_{\mathbb R^N}H_0(|X^i|)\,dx .
\end{equation}

We next introduce the energy density required for the splitting of the energy. For $p>2$ and $s\geq0$, set
\[
K_p(s):=
\frac{\beta}{p}s^p
+m\gamma\int_0^s
\left(\frac{t^{p-2}-1}{p-2}\right)^{m-1}t^{p-1}\,dt,
\]
and define
\[
K_0(s):=
\frac{\beta}{2}s^2
+m\gamma\int_0^s(\ln t)^{m-1}t\,dt,
\qquad s>0,
\]
with $K_0(0):=0$. Since $m-1$ is even, $K_p(s)\geq0$ for all $s\geq0$.

We claim that there exists a constant $C>0$, independent of $p\in(2,p_0)$, such that
\begin{equation}
	\label{l1: eq-K-bound}
	0\leq K_p(s)\leq C\bigl(s^2+|H_p(s)|\bigr),
	\qquad s\geq0,
\end{equation}
where
\[
H_p(s):=\left[\beta\frac{s^{p-2}-1}{p-2}
+\gamma\left(\frac{s^{p-2}-1}{p-2}\right)^m\right]s^2.
\]
Indeed, by \eqref{l1: eq2-9},
\[
\int_0^s
\left(\frac{t^{p-2}-1}{p-2}\right)^{m-1}t^{p-1}\,dt
\leq
C s^p\left(1+\left|\frac{s^{p-2}-1}{p-2}\right|^{m-1}\right).
\]
For $0<s\leq1$, we have $s^p\leq s^2$ and
\[
\left|\frac{s^{p-2}-1}{p-2}\right|^{m-1}
\leq1+\left|\frac{s^{p-2}-1}{p-2}\right|^m.
\]
Since $m$ is odd, $H_p(s)\leq0$ on $(0,1]$, and hence
\[
s^2\left|\frac{s^{p-2}-1}{p-2}\right|^m
\leq C|H_p(s)|.
\]
Thus \eqref{l1: eq-K-bound} follows for $0<s\leq1$.
For $s>1$, using
\[
s^p=s^2\left(1+(p-2)\frac{s^{p-2}-1}{p-2}\right)
\]
and
\[
\left(\frac{s^{p-2}-1}{p-2}\right)^{m-1}
\leq1+\left(\frac{s^{p-2}-1}{p-2}\right)^m,
\]
we again obtain \eqref{l1: eq-K-bound}.

Let $\Omega_{n,R}$ be as in Step 3. By the strong convergence of $v_n$ obtained in Step 4 and the separation of the profiles,
\begin{equation}
	\label{l1: eq-L2-tail}
	\lim_{R\to\infty}\limsup_{n\to\infty}
	\int_{\Omega_{n,R}^c}|u_n|^2\,dx=0.
\end{equation}
Moreover, \eqref{l1: eq-H-splitting} and the local convergence established in Step 3 imply
\[
\lim_{R\to\infty}\lim_{n\to\infty}
\int_{\Omega_{n,R}^c}H_n(|u_n|)\,dx=0.
\]
On the other hand, the estimate in Step 3 gives
\[
\lim_{R\to\infty}\limsup_{n\to\infty}
\int_{\Omega_{n,R}^c}H_n^+(|u_n|)\,dx=0.
\]
Since $H_n=H_n^+-H_n^-$, it follows that
\begin{equation}
	\label{l1: eq-H-tail}
	\lim_{R\to\infty}\limsup_{n\to\infty}
	\int_{\Omega_{n,R}^c}|H_n(|u_n|)|\,dx=0.
\end{equation}
Therefore, by \eqref{l1: eq-K-bound}, \eqref{l1: eq-L2-tail}, and \eqref{l1: eq-H-tail},
\begin{equation}
	\label{l1: eq-K-tail}
	\lim_{R\to\infty}\limsup_{n\to\infty}
	\int_{\Omega_{n,R}^c}K_{p_n}(|u_n|)\,dx=0.
\end{equation}

For every fixed $R>0$, the local convergences obtained in Step 2 imply, after passing to a subsequence if necessary,
\[
K_{p_n}(|u_n|)\longrightarrow K_0(|\widetilde u|)
\quad\text{a.e. in }B_R(0),
\]
and, for each $i=1,\ldots,k$,
\[
K_{p_n}(|u_n(\cdot+y_n^i)|)\longrightarrow K_0(|X^i|)
\quad\text{a.e. in }B_R(0).
\]
By \eqref{l1: eq-K-bound}, the uniform integrability of $\{H_n(|u_n|)\}$ established in Step 3, and the local strong $L^2$ convergence, the corresponding sequences of $K_{p_n}$ are uniformly integrable on every fixed ball. Hence, by Vitali's theorem,
\[
\lim_{n\to\infty}
\int_{\Omega_{n,R}}K_{p_n}(|u_n|)\,dx
=
\int_{B_R(0)}K_0(|\widetilde u|)\,dx
+
\sum_{i=1}^{k}\int_{B_R(0)}K_0(|X^i|)\,dx.
\]
Combining this with \eqref{l1: eq-K-tail} and then letting $R\to\infty$, we conclude that
\begin{equation}
	\label{l1: eq-K-splitting}
	\lim_{n\to\infty}
	\int_{\mathbb R^N}K_{p_n}(|u_n|)\,dx
	=
	\int_{\mathbb R^N}K_0(|\widetilde u|)\,dx
	+
	\sum_{i=1}^{k}\int_{\mathbb R^N}K_0(|X^i|)\,dx.
\end{equation}

\noindent
\emph{Step 6. Splitting of the energy.}

For convenience, set
\[
L_p(s):=\frac{s^{p-2}-1}{p-2},\qquad s>0.
\]
By integration by parts, for every $s\geq0$,
\[
2\int_0^sL_p(t)t\,dt
=s^2L_p(s)-\frac{1}{p}s^p,
\]
and
\[
2\int_0^sL_p(t)^m t\,dt
=s^2L_p(s)^m
-m\int_0^sL_p(t)^{m-1}t^{p-1}\,dt.
\]
Consequently, from the definition of $I_p$, we obtain
\begin{equation}
	\label{l1: eq-energy-K}
	2I_p(u)-\langle I_p'(u),u\rangle
	=
	\int_{\mathbb R^N}K_p(|u|)\,dx.
\end{equation}
Since $u_n\in\mathcal N_{p_n}$,
\[
\langle I'_{p_n}(u_n),u_n\rangle=0,
\]
and hence
\begin{equation}
	\label{l1: eq-energy-Kpn}
	I_{p_n}(u_n)
	=
	\frac12\int_{\mathbb R^N}K_{p_n}(|u_n|)\,dx.
\end{equation}

Similarly, by the definitions of $I$, $I_\infty$, $\varphi$, and
$\varphi_\infty$, together with the corresponding logarithmic
integration-by-parts identities, we have
\[
2I(u)-\varphi(u)
=
\int_{\mathbb R^N}K_0(|u|)\,dx,
\]
and
\[
2I_\infty(u)-\varphi_\infty(u)
=
\int_{\mathbb R^N}K_0(|u|)\,dx.
\]
Since $\widetilde u$ is a weak solution of \eqref{l1: eq1-1} and each $X^i$ is a weak solution of \eqref{l1: eq2-3}, the self-testing identities established in Step 2 yield
\[
\varphi(\widetilde u)=0,
\qquad
\varphi_\infty(X^i)=0.
\]
Therefore,
\[
I(\widetilde u)
=
\frac12\int_{\mathbb R^N}K_0(|\widetilde u|)\,dx,
\]
and
\[
I_\infty(X^i)
=
\frac12\int_{\mathbb R^N}K_0(|X^i|)\,dx.
\]
Combining these identities with \eqref{l1: eq-K-splitting}, we obtain
\[
\begin{aligned}
	d
	=\lim_{n\to\infty}I_{p_n}(u_n)
	=\frac12\lim_{n\to\infty}
	\int_{\mathbb R^N}K_{p_n}(|u_n|)\,dx
	=\frac12\int_{\mathbb R^N}K_0(|\widetilde u|)\,dx
	+\frac12\sum_{i=1}^{k}
	\int_{\mathbb R^N}K_0(|X^i|)\,dx
	=I(\widetilde u)+\sum_{i=1}^{k}I_\infty(X^i).
\end{aligned}
\]
This completes the proof.
\qed

\begin{corollary}\label{l1: sec2-8}
\it Assume $(AC_2)$ holds. Let $w\in\mathcal D$ be a nonnegative ground state of the autonomous limit problem \eqref{l1: eq2-3}. Under the conditions and assumptions of \Cref{l1: sec2-7}, if $d\in(0,c_\infty)\cup(c_\infty,c_\infty+c)$, then $u_n\to\tilde u$ in $H^1(\R^N)$ up to a subsequence.
\end{corollary}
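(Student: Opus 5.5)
We argue directly from the profile decomposition in \Cref{l1: sec2-7}. Passing to a subsequence, we obtain a weak solution $\widetilde u\in\mathcal D$ of \eqref{l1: eq1-1}, an integer $k\ge0$, nontrivial autonomous solutions $X^1,\dots,X^k\in\mathcal D$, and divergent translations $y_n^i$ such that
\[
u_n-\sum_{i=1}^kX^i(\cdot-y_n^i)\to\widetilde u\quad\text{in }H^1(\R^N),\qquad d=I(\widetilde u)+\sum_{i=1}^kI_\infty(X^i).
\]
It suffices to show $k=0$. Indeed, the convergence then reads $u_n\to\widetilde u$ in $H^1(\R^N)$.

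The first step records two energy bounds. Every nontrivial weak solution $X\in\mathcal D$ of \eqref{l1: eq2-3} satisfies $\varphi_\infty(X)=0$ by the self-testing identity from Step 2 of \Cref{l1: sec2-7}. Hence $X\in\mathcal N_\infty$ and $I_\infty(X)\ge c_\infty$. For $\widetilde u$ we use the identity $I(\widetilde u)=\tfrac12\int K_0(|\widetilde u|)\,dx\ge0$ from Step 6, which holds because $K_0\ge0$ ($m-1$ is even). If $\widetilde u\neq0$, then $\varphi(\widetilde u)=0$, so $\widetilde u\in\mathcal N$ and $I(\widetilde u)\ge c$. Note also that $c>0$: this follows from the positive-geometry estimate, which is uniform as $p\to2^+$, or from $c\ge\liminf c_p>0$ by an argument mirroring Step 3 of \Cref{l1: sec2-6} applied to $I$. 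In every case $I(\widetilde u)\in\{0\}\cup[c,\infty)$.

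The second step is a case analysis on $k$. If $k\ge2$, then $d\ge2c_\infty\ge c_\infty+c$, using $c\le c_\infty$ from \Cref{l1: sec2-6}; this contradicts $d<c_\infty+c$. If $k=1$, there are two subcases. When $\widetilde u\ne0$, we get $d\ge c+I_\infty(X^1)\ge c+c_\infty$, again a contradiction. When $\widetilde u=0$, we get $d=I_\infty(X^1)$. This rules out $d\in(0,c_\infty)$ because $I_\infty(X^1)\ge c_\infty$. For $d\in(c_\infty,c_\infty+c)$, we have $I_\infty(X^1)=d\in(c_\infty,2c_\infty)$, since $c_\infty+c\le 2c_\infty$. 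But \Cref{l1: sec2-1} (under $(AC_2)$) says $I_\infty$ has no critical value in $(c_\infty,2c_\infty)$. Since $X^1$ is a critical point of $I_\infty$ in the weak sense used there, this is a contradiction. Hence $k=0$.

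The main obstacle is the subcase $k=1$, $\widetilde u=0$, where the conclusion rests entirely on \Cref{l1: sec2-1}. There we must check that $X^1$ is admissible in that lemma, i.e.\ that it is a critical point in $\mathcal D$ with energy strictly below $2c_\infty$. This holds since $d<c_\infty+c\le2c_\infty$. Some care is also needed to confirm that $I(\widetilde u)\ge c$ whenever $\widetilde u\ne0$, which requires $\widetilde u\in\mathcal D\setminus\{0\}$ with $\varphi(\widetilde u)=0$. Both facts are provided by \Cref{l1: sec2-7}.
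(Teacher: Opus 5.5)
Your proposal is correct and follows essentially the same route as the paper. Both arguments combine the decomposition of \Cref{l1: sec2-7} with the bounds $I(\widetilde u)\in\{0\}\cup[c,\infty)$ and $I_\infty(X^i)\ge c_\infty$, rule out $k\ge2$ and ($k=1$, $\widetilde u\neq0$) via $c\le c_\infty$, and rule out ($k=1$, $\widetilde u=0$) via the no-critical-value property of \Cref{l1: sec2-1}; the only difference is that you organize the cases by $k$ rather than by the interval containing $d$, and your remark on $c>0$ is harmless but not needed.
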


\noindent
{\bf Proof.} By \Cref{l1: sec2-7}, we have $d=I(\widetilde u)+\sum\limits_{i=1}^{k}I_\infty(X^i)$, where each $X^i$ is a nontrivial weak solution of the autonomous problem \eqref{l1: eq2-3}. By the self-testing identities established in \Cref{l1: sec2-7}, we have
\[
X^i\in\mathcal N_\infty,
\qquad
I_\infty(X^i)\ge c_\infty,
\qquad i=1,\ldots,k.
\]
Moreover, the energy identity established in Step 6 of \Cref{l1: sec2-7} gives
\[
I(\widetilde u)
=
\frac12\int_{\mathbb R^N}K_0(|\widetilde u|)\,dx
\ge0.
\]
If $\widetilde u\neq0$, then the self-testing identity also yields $\widetilde u\in\mathcal N$, and hence
\[
I(\widetilde u)\ge c.
\]

If $d\in(0,c_\infty)$ and $k\ge1$, then
\[
d
=
I(\widetilde u)+\sum_{i=1}^{k}I_\infty(X^i)
\ge c_\infty,
\]
which is impossible. Hence $k=0$, and \Cref{l1: sec2-7} yields
\[
u_n\to\widetilde u
\qquad\text{strongly in }H^1(\mathbb R^N).
\]

Next, suppose that
\[
d\in(c_\infty,c_\infty+c).
\]
If $k\ge2$, then, using \Cref{l1: sec2-6},
$$
d=I(\widetilde u)+\sum_{i=1}^{k}I_\infty(X^i)\ge I_\infty(X^1)+I_\infty(X^2)	\ge 2c_\infty\ge c_\infty+c,
$$
which contradicts $d<c_\infty+c$.

It remains to exclude the case $k=1$.
If $\widetilde u\neq0$, then
\[
d
=
I(\widetilde u)+I_\infty(X^1)
\ge c+c_\infty,
\]
again a contradiction.

If $\widetilde u=0$, then
\[
d=I_\infty(X^1).
\]
Since \Cref{l1: sec2-6} gives $c\le c_\infty$, we have
\[
c_\infty
<
d
<
c_\infty+c
\le2c_\infty.
\]
Thus
\[
d\in(c_\infty,2c_\infty),
\]
which contradicts \Cref{l1: sec2-1}, since $I_\infty$ has no critical value in $(c_\infty,2c_\infty)$.

Therefore $k=0$. By \Cref{l1: sec2-7},
\[
u_n\to\widetilde u
\qquad\text{strongly in }H^1(\mathbb R^N).
\]
The proof is completed. \qed
\vskip2mm
{\section{Existence of a nonnegative bound state}}
\setcounter{equation}{0}
\noindent
In this section, we prove the existence of a nontrivial nonnegative bound state for the logarithmic problem. The main difficulty is to recover the compactness of the Palais-Smale sequences due to the lack of compactness in the whole space. To overcome this difficulty, we first construct a suitable min-max level below the splitting threshold by using the barycenter technique. Then, combining the energy estimate obtained previously with the profile decomposition of Palais-Smale sequences, we obtain a critical point of the approximating functional and complete the proof of the main theorem.
\subsection{Construction of the min-max level}\noindent
In this subsection, we construct a suitable min--max level for the approximating functional. First, we establish the convergence of the ground state energy levels as $p\to2^+$. Then, by introducing the barycenter map and constructing an appropriate class of paths, we obtain a critical level below the threshold where the splitting of Palais--Smale sequences may occur.
\begin{proposition}
	\label{l1: sec3-1}
	Assume $(AC_1)$ holds. Then there exists $\lim\limits_{p\to2^+}c_p=c$. Moreover, if $c<c_\infty$, then $c$ is attained.
\end{proposition}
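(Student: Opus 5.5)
Since \Cref{l1: sec2-6} already gives $\limsup_{p\to2^+}c_p\le c$, the first assertion reduces to $\liminf_{p\to2^+}c_p\ge c$. The natural route is to work with (almost) minimizers of $c_p$ on $\mathcal N_p$ and pass to the limit using \Cref{l1: sec2-7}.

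\textbf{Step 1: minimizing sequences.} Take $p_n\to2^+$ realizing the $\liminf$, so that $c_{p_n}\to d:=\liminf_{p\to2^+}c_p$. By \Cref{l1: sec2-6}, $0<d\le c$. For each $n$, Ekeland's variational principle on the $C^1$ manifold $\mathcal N_{p_n}$ gives $u_n\in\mathcal N_{p_n}$ with
\[
I_{p_n}(u_n)\le c_{p_n}+\tfrac1n,\qquad \|I'_{p_n}|_{\mathcal N_{p_n}}(u_n)\|_*\le\tfrac1n .
\]
This applies because $\varphi_{p_n}'\ne0$ on $\mathcal N_{p_n}$ and $\mathcal N_{p_n}$ is closed, being bounded away from $0$ by the estimate \eqref{eq:uniform-positive-geometry}. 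A diagonal choice makes $\{u_n\}$ satisfy the hypotheses of \Cref{l1: sec2-7} with level $d$. Replacing $u_n$ by $|u_n|$ is harmless, since $I_p(|u|)=I_p(u)$ and $|u|\in\mathcal N_p$.

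\textbf{Step 2: profile decomposition.} \Cref{l1: sec2-7} gives
\[
d=I(\widetilde u)+\sum_{i=1}^kI_\infty(X^i),
\]
with $X^i\in\mathcal N_\infty$ and $I(\widetilde u)\ge0$; moreover $I(\widetilde u)\ge c$ whenever $\widetilde u\ne0$, because $\widetilde u\in\mathcal N$. Not both $\widetilde u=0$ and $k=0$ can hold, since then $u_n\to0$ in $H^1$. But $u_n\in\mathcal N_{p_n}$ with $I_{p_n}(u_n)\ge\eta>0$ keeps $\|u_n\|\ge\rho$ uniformly; alternatively, $d>0$ forces a nonzero limit through the $K$-splitting. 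Hence either $\widetilde u\ne0$, giving $d\ge c$, or $k\ge1$, giving $d\ge c_\infty\ge c$. In both cases $d\ge c$, so $\lim_{p\to2^+}c_p=c$.

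\textbf{Step 3: attainment when $c<c_\infty$.} Run the same argument with $d=c$. If $k\ge1$, then $c=d\ge c_\infty$, contradicting $c<c_\infty$. So $k=0$, $u_n\to\widetilde u$ strongly in $H^1$, and $\widetilde u\ne0$. Thus $\widetilde u\in\mathcal N$ and $I(\widetilde u)=d=c$, so $c$ is attained, with a nonnegative minimizer since $u_n\ge0$.

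\textbf{Main obstacle.} The delicate step is Step 1: justifying Ekeland's principle on $\mathcal N_{p_n}$ and the diagonal extraction. I would handle it with the standard constrained Ekeland argument, using that $\mathcal N_{p}$ is a $C^1$ manifold of codimension one, with transversality $-\langle\varphi_p'(u),u\rangle>0$ from Section~2.2. A secondary point is ruling out $\widetilde u=0,\ k=0$, which I plan to settle via the uniform lower bound $\|u\|\ge\rho$ on $\mathcal N_p$ together with strong convergence.
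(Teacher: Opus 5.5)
Your proposal is correct and follows essentially the same route as the paper: Ekeland almost-minimizers on $\mathcal N_{p_n}$, the profile decomposition of \Cref{l1: sec2-7}, then the dichotomy $\widetilde u\neq0$ (giving $d\ge c$) versus $\widetilde u=0$ with $k\ge1$ (giving $d\ge c_\infty\ge c$), and finally excluding $k\ge1$ when $c<c_\infty$. One small caveat: do not replace $u_n$ by $|u_n|$ after applying Ekeland, since $\|I'_{p}|_{\mathcal N_{p}}(|u|)\|_*$ is not controlled by $\|I'_{p}|_{\mathcal N_{p}}(u)\|_*$; if you want a nonnegative minimizer (the statement does not ask for one), simply replace a minimizer $u$ by $|u|$, which is allowed because $I(|u|)=I(u)$ and $|u|\in\mathcal N$.
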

\noindent
\noindent
{\bf Proof.}Let $p_n\to2^+$. By the Ekeland variational principle and \Cref{l1: sec2-6}, there exists a subsequence of $u_n\in\mathcal N_{p_n}$, still denoted by $\{u_n\}$, such that $\|I'_{p_n}|_{\mathcal N_{p_n}}(u_n)\|_*\to0$ and $I_{p_n}(u_n)\to d \in (0,c]$. Applying \Cref{l1: sec2-7}, up to a subsequence, we obtain
\[
d
=
I(\widetilde u)
+
\sum_{i=1}^{k}I_\infty(X^i).
\]

If $\widetilde u\neq0$, then the self-testing identity gives
$\widetilde u\in\mathcal N$, and hence
\[
d\ge I(\widetilde u)\ge c.
\]
If $\widetilde u=0$, then $d>0$ implies $k\ge1$. Therefore,
\[
d
=
\sum_{i=1}^{k}I_\infty(X^i)
\ge c_\infty
\ge c,
\]
where the last inequality follows from \Cref{l1: sec2-6}.
Thus in either case $d\ge c$. Since $d\le c$, we conclude that
\[
d=c.
\]
Hence every subsequential limit of $c_{p_n}$ is equal to $c$, and
therefore
\[
\lim_{p\to2^+}c_p=c.
\]

Now assume that $c<c_\infty$. Let $p_n\to2^+$ and choose
$u_n\in\mathcal N_{p_n}$ by the Ekeland variational principle such that
\[
I_{p_n}(u_n)\to c,
\qquad
\big\|I'_{p_n}|_{\mathcal N_{p_n}}(u_n)\big\|_*
\to0.
\]
By \Cref{l1: sec2-7},
\[
c
=
I(\widetilde u)
+
\sum_{i=1}^{k}I_\infty(X^i).
\]
If $k\ge1$, then, since $I(\widetilde u)\ge0$,
\[
c
\ge I_\infty(X^1)
\ge c_\infty,
\]
which contradicts $c<c_\infty$. Hence $k=0$.

It follows from \Cref{l1: sec2-7} that
\[
u_n\to\widetilde u
\qquad\text{strongly in }H^1(\mathbb R^N),
\]
and
\[
I(\widetilde u)=c.
\]
Since $c>0$, we have $\widetilde u\neq0$. Moreover, the self-testing identity yields $\widetilde u\in\mathcal N$. Therefore $c$ is attained.\qed
\vskip2mm
\noindent

By \Cref{l1: sec3-1,l1: sec2-6}, we have $c\leq c_\infty$. If $c$ is attained by a nontrivial critical point of $I$, then the same positive-negative part argument as in the proof of \Cref{l1: sec1-1} yields a nontrivial nonnegative bound state, and the proof is completed.

Therefore, in the sequel, we only consider the case where $c$ is not attained. By \Cref{l1: sec3-1}, we then necessarily have $c=c_\infty$.

\vskip2mm
\noindent
\begin{definition}\label{l1: sec3-2}
\it Define the center of gravity $g: H^1(\R^N)\setminus \{0\}\to \R^N$ as follows:
$$
g(u)=\int_{\R^N}\frac{x}{|x|}u^2dx,\;\; u\in H^1(\R^N)\setminus\{0\}.
$$
\end{definition}
\begin{remark}\label{l1: sec3-3}
\it Denote $w(x-h)$ as the radial ground state solution of (\ref{l1: eq2-3}). Then there exists
$$
\lim_{|h|\to\infty}g(w(x-h))\cdot \frac{h}{|h|}=\int_{\R^N}w^2(x)\,dx=:C_7>0.
$$
\end{remark}
Define
$$
\mathcal{M}_p:=\big\{u\in \mathcal{N}_p\mid g(u)=0\big\},\;\;
b_p:=\inf_{u \in \mathcal{M}_p} I_p(u).
$$
Then, we have the following Lemmas.

\begin{lemma}\label{l1: sec3-4}
	For $p\to 2^+$, one has $\liminf\limits_{p\to2^+}b_p>c$.
\end{lemma}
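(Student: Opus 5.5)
We argue by contradiction, following the barycenter scheme of \cite{fengwx2024,Cerami2005}. We are in the case where $c$ is not attained, so $c=c_\infty$. Since $\mathcal M_p\subset\mathcal N_p$, we have $b_p\ge c_p$, and by \Cref{l1: sec3-1} $c_p\to c$. Hence $\liminf_{p\to2^+}b_p\ge c$, and it suffices to exclude equality. Suppose there are $p_n\to2^+$ and $u_n\in\mathcal M_{p_n}$ with $I_{p_n}(u_n)\to c$. Since $c_{p_n}\to c$, $\{u_n\}$ is a minimizing sequence for $c_{p_n}$ on $\mathcal N_{p_n}$, up to an error $\varepsilon_n:=I_{p_n}(u_n)-c_{p_n}\to0$.

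\textbf{Producing a Palais--Smale sequence.} By Ekeland's variational principle on the complete $C^1$ manifold $\mathcal N_{p_n}$ (using $\varphi_{p_n}'\neq0$ on $\mathcal N_{p_n}$), we obtain $\tilde u_n\in\mathcal N_{p_n}$ with
\[
\|\tilde u_n-u_n\|\le\sqrt{\varepsilon_n},\qquad I_{p_n}(\tilde u_n)\le I_{p_n}(u_n),\qquad \|I'_{p_n}|_{\mathcal N_{p_n}}(\tilde u_n)\|_*\le\sqrt{\varepsilon_n}.
\]
Then \Cref{l1: sec2-7} applies to $\{\tilde u_n\}$ at level $d=c$. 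This gives
\[
c=I(\widetilde u)+\sum_{i=1}^kI_\infty(X^i),
\]
with $I(\widetilde u)\ge0$ and $I_\infty(X^i)\ge c_\infty=c$. Since $c>0$ is not attained, $\widetilde u\neq0$ is impossible: otherwise $\widetilde u\in\mathcal N$ and $I(\widetilde u)\le c$ would force $I(\widetilde u)=c$, which attains $c$. Thus $\widetilde u=0$ and $k=1$, with $I_\infty(X^1)=c_\infty$. Hence $X^1$ is a ground state of \eqref{l1: eq2-3}, and
\[
\tilde u_n-X^1(\cdot-y_n)\to0\ \text{ in } H^1(\mathbb R^N),\qquad |y_n|\to\infty.
\]
Replacing $X^1$ by $|X^1|$ if needed, which is again a ground state since $I_\infty(|X|)=I_\infty(X)$, \Cref{l1: sec2-1} identifies $X^1$ with a translate of the nonnegative ground state $w$ in the case $X^1\ge0$. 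If $X^1$ is sign-changing, the argument of \Cref{l1: sec2-1} gives $I_\infty(X^1)\ge2c_\infty$, a contradiction. If $X^1\le0$, we argue with $-X^1$; the barycenter depends only on $u^2$.

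\textbf{Contradiction via the barycenter.} Since $\|u_n-\tilde u_n\|\to0$, also $u_n-w(\cdot-y_n)\to0$ in $H^1$, hence in $L^2$. The map $g$ is continuous from $L^2$ to $\mathbb R^N$ with
\[
|g(u)-g(v)|\le\|u-v\|_{L^2}\,\|u+v\|_{L^2},
\]
so $g(u_n)-g(w(\cdot-y_n))\to0$. Passing to a subsequence with $h_n:=y_n$, \Cref{l1: sec3-3} yields
\[
g(u_n)\cdot\frac{h_n}{|h_n|}\to\int_{\mathbb R^N}w^2\,dx=C_7>0.
\]
This contradicts $g(u_n)=0$. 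Hence $\liminf_{p\to2^+}b_p>c$.

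\textbf{Main obstacle.} The main difficulty is the Ekeland step: we must make sure that the restricted Palais--Smale hypotheses of \Cref{l1: sec2-7} are satisfied, uniformly in $n$, by a sequence that stays $H^1$-close to the original $u_n$, which need not itself be Palais--Smale. The key point is that $\mathcal N_{p_n}$ is a closed $C^1$ submanifold (the transversality $-\langle\varphi_p'(u),u\rangle>0$ shown earlier) and $I_{p_n}$ is bounded below on it by $c_{p_n}$. A second, minor point is identifying the single profile as a translate of $w$ up to sign, which follows from \Cref{l1: sec2-1} as above.
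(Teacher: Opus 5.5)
Your proposal is correct and follows the paper's argument essentially step by step. The steps are: Ekeland's principle to get a constrained Palais--Smale sequence close to $\mathcal M_{p_n}$; the profile decomposition of \Cref{l1: sec2-7}, which forces $\widetilde u=0$ and $k=1$ in the non-attained case $c=c_\infty$; and the barycenter contradiction. The one difference is that you identify $X^1$ with a translate of $\pm w$ via \Cref{l1: sec2-1} before invoking \Cref{l1: sec3-3}. The paper skips this and applies dominated convergence directly to get $g(X^1(\cdot-y_n^1))\cdot y_n^1/|y_n^1|\to\int_{\mathbb R^N}|X^1|^2\,dx>0$, so it never needs $(AC_2)$ at this point.
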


\noindent
{\bf Proof.}
Since $\mathcal M_p\subset\mathcal N_p$, by the definitions of $b_p$ and $c_p$ we have $b_p\geq c_p$. Hence, in virtue of \Cref{l1: sec3-1}, $\liminf\limits_{p\to2^+}b_p\geq c$. 

Suppose by contradiction that
\[
\liminf_{p\to2^+}b_p=c.
\]
Then there exists a sequence $p_n\to2^+$ such that $b_{p_n}\to c$. It follows from \Cref{l1: sec3-1} that
\[
b_{p_n}-c_{p_n}\to0.
\]

Choose $z_n\in\mathcal M_{p_n}$ such that $I_{p_n}(z_n)\leq b_{p_n}+\frac1n$. Then $I_{p_n}(z_n)-c_{p_n}\to0$. Applying the Ekeland variational principle to $I_{p_n}|_{\mathcal N_{p_n}}$, we obtain $u_n\in\mathcal N_{p_n}$ such that
\[
I_{p_n}(u_n)\to c,
\qquad
\big\|I'_{p_n}|_{\mathcal N_{p_n}}(u_n)\big\|_*\to0,
\]
and
\[
\|u_n-z_n\|_{H^1(\mathbb R^N)}\to0.
\]
Since $z_n\in\mathcal M_{p_n}$, we have $g(z_n)=0$. Moreover, using
\[
|g(u)-g(v)|
\leq
\int_{\mathbb R^N}|u^2-v^2|\,dx
\leq
\|u-v\|_{L^2}
\big(\|u\|_{L^2}+\|v\|_{L^2}\big),
\]
together with the boundedness of $\{u_n\}$ and $\{z_n\}$, we obtain
\[
g(u_n)\to0.
\]

Applying \Cref{l1: sec2-7}, up to a subsequence, there exist a weak solution $\widetilde u$ of \eqref{l1: eq1-1}, nontrivial weak
solutions $X^1,\ldots,X^k$ of \eqref{l1: eq2-3}, and translations $y_n^i$ such that
\[
c
=
I(\widetilde u)
+
\sum_{i=1}^{k}I_\infty(X^i).
\]
Since $I(\widetilde u)\geq0$ and $I_\infty(X^i)\geq c_\infty=c$, we must have $k\leq1$.

If $k=0$, then $I(\widetilde u)=c$. Since $c>0$, $\widetilde u\neq0$, and hence $c$ is attained, which contradicts the standing non-attainment assumption. Therefore, $k=1$.

If $\widetilde u\neq0$, then $\widetilde u\in\mathcal N$, and hence $c=I(\widetilde u)+I_\infty(X^1)\geq c+c_\infty=2c$, which is impossible. Consequently, $\widetilde u=0$. Thus, $I_\infty(X^1)=c=c_\infty$, and \Cref{l1: sec2-7} gives $u_n-X^1(x-y_n^1) \to0\;\text{in }H^1(\mathbb R^N)$, where $|y_n^1|\to\infty$.

By the change of variables $x=z+y_n^1$ and the dominated convergence theorem,
\begin{eqnarray*}
	g\big(X^1(x-y_n^1)\big)\cdot \frac{y_n^1}{|y_n^1|}
	=
	\int_{\mathbb R^N}
	\frac{z+y_n^1}{|z+y_n^1|}
	\cdot \frac{y_n^1}{|y_n^1|}\,|X^1(z)|^2\,dz
	\longrightarrow
	\int_{\mathbb R^N}|X^1(z)|^2\,dz
	>0.
\end{eqnarray*}

On the other hand,
\[
\begin{aligned}
	\left|	g\big(X^1(x-y_n^1)\big)-g(u_n)\right|
	=\left|\int_{\R^N}\frac{x}{|x|}\Big[\big(X^1(x-y_n^1)\big)^2-u_n^2(x)\Big]dx\right|
	&\leq
	\int_{\mathbb R^N}
	\left|
	\big(X^1(x-y_n^1)\big)^2-u_n^2(x)
	\right|dx\\
	&\leq
	\left(
	\|X^1\|_{L^2}+\|u_n\|_{L^2}
	\right)
	\|X^1(\cdot-y_n^1)-u_n\|_{L^2}\\
	&\longrightarrow 0.
\end{aligned}
\]
Since $g(u_n)\to0$, it follows that
\[
g\big(X^1(x-y_n^1)\big)\to0,
\]
which contradicts the previous positive limit.

Therefore,
\[
\liminf_{p\to2^+}b_p>c.
\]
The proof is completed. \qed

\begin{lemma}\label{l1: sec3-5}
	\it For every $h\in\R^N$ and every $p>2$ sufficiently close to $2$,
	there exists a unique constant $t_{h,p}>0$ such that
	$t_{h,p}w(x-h)\in\mathcal{N}_p$. Moreover, there exist $p_0>2$ and
	a constant $T_0<2^{\frac1\beta}$ such that
	\[
	t_{h,p}<T_0
	\]
	for all $h\in\R^N$ and $2<p<p_0$. Furthermore,
	\begin{eqnarray}
		\label{l1: 3-1}
		&\lim\limits_{p\to 2^+,\,|h|\to\infty}t_{h,p}=1,\nonumber\\
		&\limsup\limits_{p\to 2^+,\,|h|\to\infty}
		I_p(t_{h,p}w(x-h))\leq c_\infty.
	\end{eqnarray}
\end{lemma}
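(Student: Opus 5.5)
Existence and uniqueness of $t_{h,p}$ come from the ray monotonicity already proved in Section 2.2: for $w_h:=w(\cdot-h)\neq0$ the map
\[
t\mapsto \frac{\varphi_p(tw_h)}{t^2}=\int_{\R^N}(\alpha|\nabla w|^2+V(x+h)w^2)\,dx-\int_{\R^N}\bigl[\beta L_p(t w)+\gamma L_p(tw)^m\bigr]w^2\,dx
\]
is strictly decreasing and continuous. It is positive for small $t$, since $L_p(tw)\to-\infty$ pointwise on $\{w>0\}$. It tends to $-\infty$ as $t\to\infty$. Hence it has exactly one zero $t_{h,p}$.

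For the uniform bound I would plug a candidate $T$ into the Nehari function and show it is negative for all $h$ and all $p$ close to $2$. I will use $L_p(ts)\ge\ln(ts)=\ln t+\ln s$ (from \cite{fengwx2024}) and the monotonicity of $\Phi(\sigma)=\beta\sigma+\gamma\sigma^m$. Then
\[
\frac{\varphi_p(Tw_h)}{T^2}\le \int(\alpha|\nabla w|^2+V^*w^2)-\int\Phi(\ln T+\ln w)w^2 .
\]
Since $w\in\mathcal N_\infty$, we have $\int(\alpha|\nabla w|^2+w^2)=\int\Phi(\ln w)w^2$. Moreover $\Phi(\sigma+\lambda)-\Phi(\sigma)\ge\beta\lambda$ for $\lambda\ge0$, because $\Phi'\ge\beta$ ($m$ odd). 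So the right side is at most
\[
(V^*-1-\beta\ln T)\int w^2 .
\]
This is negative as soon as $\ln T>(V^*-1)/\beta$. Because $V^*<1+\ln 2$, I can pick $T_0$ with $\frac{V^*-1}{\beta}<\ln T_0<\frac{\ln2}{\beta}$, i.e. $T_0<2^{1/\beta}$. Monotonicity then gives $t_{h,p}<T_0$ for all $h$ and all $p$, with no restriction even needed on $p$ beyond admissibility. A symmetric argument with $V_0>0$ should give a uniform lower bound $t_{h,p}\ge t_0>0$, which is needed below. For $t\le1$ this would use $L_p\le$ something controlled, i.e. the uniform estimates of Lemma~2.3 on $\{tw>1\}$ together with the favourable sign on $\{tw\le1\}$. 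I expect this lower bound to be the most delicate point, since $L_p$ only dominates $\ln$ from above.

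For the limits I would follow Step 1 and Step 2 of \Cref{l1: sec2-6} jointly in $(p,h)$. Set $F_{h,p}(t):=\varphi_p(tw_h)/t^2$ and $F_\infty(t):=\varphi_\infty(tw)/t^2$. Then $F_{h,p}-F_\infty$ splits into two pieces. The first is $\int(V(\cdot+h)-1)w^2$, which tends to $0$ by dominated convergence and $(AC_1)$. The second is $\int[\Phi(L_p(tw))-\Phi(\ln(tw))]w^2$, which tends to $0$ uniformly for $t\in[t_0,T_0]$. For this I would use $|L_p(s)|\le|\ln s|$ for $s\le1$, Lemma~2.3 for $s>1$, $w\in\mathcal D$, and dominated convergence. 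Since $F_\infty$ has the unique zero $t=1$ and is strictly decreasing, any limit point of $t_{h,p}$ in $[t_0,T_0]$ is a zero of $F_\infty$. Hence $t_{h,p}\to1$.

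Finally I would write
\[
I_p(t_{h,p}w_h)=I_\infty(t_{h,p}w)+\tfrac12\int(V(x+h)-1)t_{h,p}^2w^2+\bigl[I_p-I\bigr](t_{h,p}w_h).
\]
The bracket is bounded above exactly as in Step 2 of \Cref{l1: sec2-6} by $\frac{\beta}{2p}\int|t_{h,p}w|^p-\frac\beta4\int|t_{h,p}w|^2$, which tends to $0$. The potential term vanishes in the limit. By continuity, $I_\infty(t_{h,p}w)\to I_\infty(w)=c_\infty$. Together these give \eqref{l1: 3-1}.
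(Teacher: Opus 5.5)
Your argument is correct. It follows the same skeleton as the paper: ray monotonicity, a comparison at a fixed $T_0$ with $\beta\ln T_0\in(V^*-1,\ln2)$ via $\Phi'\ge\beta$, a uniform lower bound, identification of limit points of $t_{h,p}$ through the unique zero $t=1$ of the limiting Nehari function, and then the energy. Several steps are carried out differently, and somewhat more cleanly.

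\emph{Upper bound.} The paper shows $B_0(T_0)-A_h\ge\delta$ and then needs $B_p(T_0)\to B_0(T_0)$, so it obtains $t_{h,p}<T_0$ only for $p$ near $2$. Your pointwise inequality $L_p(s)\ge\ln s$ gives $B_p(T_0)\ge B_0(T_0)$ directly, hence the bound for every admissible $p$. Since $V\to1$ at infinity forces $V^*\ge1$, you do have $\ln T_0>0$, so your inequality $\Phi(\sigma+\lambda)-\Phi(\sigma)\ge\beta\lambda$ with $\lambda\ge0$ applies.

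\emph{Lower bound.} The paper uses $B_0(t)\to-\infty$ as $t\to0^+$ together with $B_p(t_*)\to B_0(t_*)$. The route you sketch is not delicate, and you should simply write it out. Here $B_p(t)=\int\Phi(L_p(tw))w^2$ and $A_h=\int(\alpha|\nabla w|^2+V(\cdot+h)w^2)$. Since $\Phi(L_p(s))\le0$ for $s\le1$, and \Cref{l1: sec2-3} gives $\Phi(L_p(s))\le(\beta+\gamma)C_{\bar p,m}s^{q-2}$ for $s>1$, you get
\[
B_p(t)\le(\beta+\gamma)C_{\bar p,m}\,t^{q-2}\int_{\R^N}w^q\,dx<\alpha\int_{\R^N}|\nabla w|^2dx+V_0\int_{\R^N}w^2dx\le A_h
\]
for all small $t$, uniformly in $h$ and in all admissible $p$.

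\emph{Energy.} The paper applies dominated convergence to the whole energy density and obtains convergence to $c_\infty$. You instead split into $I_\infty(t_{h,p}w)$, the potential term, and $[I_p-I](t_{h,p}w_h)$, and use the one-sided estimate from Step 2 of \Cref{l1: sec2-6}. This gives exactly the stated $\limsup$ without having to control the higher-order primitive from below.

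\emph{One small correction.} In the existence step, for fixed $p>2$ one has $L_p(tw)\to-\frac1{p-2}$ as $t\to0^+$, not $-\infty$. Positivity of $\varphi_p(tw_h)/t^2$ for small $t$ still holds, for instance from the displayed bound on $B_p(t)$.
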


\noindent
{\bf Proof.}
Let $h\in\R^N$. According to the definition of $w(x-h)$ in
\Cref{l1: sec3-3}, we have
\[
\alpha\int_{\R^N}|\nabla w(x-h)|^2dx+\int_{\R^N}w^2(x-h)dx
=
\beta\int_{\R^N}w^2(x-h)\ln|w(x-h)|dx+\gamma\int_{\R^N}w^2(x-h)(\ln|w(x-h)|)^m dx.
\]

For convenience, set
\[
L_p(s):=\frac{s^{p-2}-1}{p-2},
\qquad s\geq0.
\]
By the ray characterization of $\mathcal N_p$ established in Section 2.2, for every $h\in\R^N$ and every $p>2$ sufficiently close to $2$, there exists a unique $t_{h,p}>0$ such that
\[
t_{h,p}w(x-h)\in\mathcal N_p.
\]
Hence, by the Nehari identity,
$$
 \int_{\R^N}	\Big(\alpha|\nabla w(x-h)|^2+V(x)w^2(x-h)\Big)dx
=\int_{\R^N}\left[	\beta L_p(t_{h,p}w(x-h))+\gamma L_p(t_{h,p}w(x-h))^m\right]w^2(x-h)\,dx.
$$

Making the change of variables $x=y+h$, set
$$
A_h:=\alpha\int_{\R^N}|\nabla w|^2dx+\int_{\R^N}V(y+h)w^2(y)\,dy\;\;\text{and}\;\;
B_p(t):=\int_{\R^N}\left[\beta L_p(tw)+\gamma L_p(tw)^m\right]w^2dx.
$$
Then
\begin{equation}\label{l1: eq4-1}
	A_h=B_p(t_{h,p}).
\end{equation}

Since $m$ is odd, the function $s\longmapsto\beta s+\gamma s^m$ is strictly increasing on $\R$. Moreover, $t\mapsto L_p(tw(x))$ is nondecreasing for every $x$ and strictly increasing whenever $w(x)>0$. Since $w\not\equiv0$, it follows that $t\longmapsto B_p(t)$ is strictly increasing on $(0,\infty)$.

We first derive a uniform upper bound for $t_{h,p}$. Choose $T_0:=e^{\theta/\beta}$, where $\theta\in(\max\{V^*-1,0\},\ln2)$. It follows from $(AC_1)$ that
\[
1<T_0<2^{1/\beta}.
\]

Define
\[
B_0(t):=\int_{\R^N}\left[\beta\ln(tw)+\gamma(\ln(tw))^m\right]w^2dx,
\]
where the integrand is understood by continuous extension at points where $w=0$.
Since $w$ solves the autonomous problem \eqref{l1: eq2-3},
\[
\alpha\int_{\R^N}|\nabla w|^2dx+\int_{\R^N}w^2dx=B_0(1).
\]
Moreover, since $m$ is odd and $\ln T_0>0$, the function $s\mapsto s^m$ is strictly increasing, and hence
$$
B_0(T_0)-B_0(1)=\beta\ln T_0\int_{\R^N}w^2dx+\gamma\int_{\R^N}\Big[(\ln T_0+\ln w)^m-(\ln w)^m\Big]w^2dx
\geq\theta\int_{\R^N}w^2dx.
$$
On the other hand,
$$
A_h-B_0(1)=\int_{\R^N}(V(y+h)-1)w^2(y)\,dy\leq	(V^*-1)\int_{\R^N}w^2dy.
$$
Therefore,
\[
B_0(T_0)-A_h\geq\big[\theta-(V^*-1)\big]\int_{\R^N}w^2dx=: \delta>0
\]
uniformly with respect to $h\in\R^N$.

For the fixed number $T_0$, by the pointwise convergence
\[
\left[
\beta L_p(T_0w)+\gamma L_p(T_0w)^m
\right]w^2
\to
\left[
\beta\ln(T_0w)+\gamma(\ln(T_0w))^m
\right]w^2
\qquad\text{as }p\to2^+,
\]
where the expression on the right-hand side is understood to be zero at the zeros of $w$ by continuous extension. Together with the uniform growth estimates established in Section 3 and the dominated convergence theorem, we have
\[
B_p(T_0)\to B_0(T_0)
\qquad\text{as }p\to2^+.
\]
Hence, for $p>2$ sufficiently close to $2$,
\[
|B_p(T_0)-B_0(T_0)|<\frac{\delta}{2}.
\]
Consequently,
\[
B_p(T_0)-A_h
\geq
-\frac{\delta}{2}+\delta
=
\frac{\delta}{2}>0
\]
for every $h\in\R^N$. Thus,
\[
B_p(T_0)>A_h=B_p(t_{h,p}).
\]
By the strict monotonicity of $B_p$, there exists $p_0>2$ such that
\[
t_{h,p}<T_0<2^{1/\beta}
\]
for every $h\in\R^N$ and every $p\in(2,p_0)$.

We next prove the first limit in \eqref{l1: 3-1}. 
Let $p_n\to2^+,|h_n|\to\infty$ and set $t_n:=t_{h_n,p_n}$. By the preceding estimate, $\{t_n\}$ is uniformly bounded from above. We claim that it is also bounded away from zero. Indeed, set
\[
A_*:=\alpha\int_{\R^N}|\nabla w|^2dx+V_0\int_{\R^N}w^2dx>0.
\]
Since $m$ is odd and $w^2|\ln w|^m\in L^1(\R^N)$, we have $B_0(t)\to-\infty\;\text{as }t\to0^+$. Thus we may choose $t_*>0$ sufficiently small such that
\[
B_0(t_*)<A_*.
\]
Using the convergence $B_p(t_*)\to B_0(t_*)$, for $p>2$ sufficiently close to $2$, one has
\[
B_p(t_*)<A_*\leq A_h
\qquad\text{for every }h\in\R^N.
\]
Using \eqref{l1: eq4-1} and the strict monotonicity of $B_p$, we obtain
\[
t_{h,p}>t_*.
\]
Therefore, for all large $n$,
\[
t_*<t_n<T_0.
\]

Up to a subsequence, assume that
\[
t_n\to t_0\in[t_*,T_0].
\]
By $(AC_1)$ and the dominated convergence theorem,
\[
A_{h_n}
\to
\alpha\int_{\R^N}|\nabla w|^2dx
+\int_{\R^N}w^2dx
=
B_0(1).
\]
Moreover, since $t_n$ remains in the compact interval $[t_*,T_0]$, the same uniform growth estimates and dominated convergence give
\[
B_{p_n}(t_n)\to B_0(t_0).
\]
Passing to the limit in \eqref{l1: eq4-1}, we obtain
\[
B_0(t_0)=B_0(1).
\]
Since $B_0$ is strictly increasing, it follows that
\[
t_0=1.
\]
Thus every convergent subsequence has the same limit, and hence
\[
\lim_{p\to2^+,\,|h|\to\infty}t_{h,p}=1.
\]

For the last result of \eqref{l1: 3-1}, using $t_{h,p}\to1$, $V(x+h)\to1$, and the pointwise convergence of the corresponding energy densities, understood by continuous extension at the zeros of $w$, together with the uniform growth estimates in Section 3 and the logarithmic integrability of $w$, the dominated convergence theorem yields
\[
I_p(t_{h,p}w(x-h))
\to
I_\infty(w)
=
c_\infty
\qquad
\text{as }p\to2^+,\quad |h|\to\infty.
\]
In particular,
\[
\limsup_{p\to2^+,\,|h|\to\infty}
I_p(t_{h,p}w(x-h))
\leq c_\infty.
\]
Thus the proof is completed. \qed

By \Cref{l1: sec3-4,l1: sec3-5}, we have 
\[
2^{\frac1\beta}-T_0>0,
\qquad
\liminf_{p\to2^+}b_p-c>0,
\qquad
c>0.
\]
Hence, we may choose
\[
0<\mu<
\min\left\{
2^{\frac1\beta}-T_0,\,
\liminf_{p\to2^+}b_p-c,\,
c
\right\}.
\]
Then, by \Cref{l1: sec3-5} and the barycenter limit in \Cref{l1: sec3-3}, we can choose $R>0$ sufficiently large and $p_1\in(2,p_0)$ sufficiently close to $2$ such that, for every $2<p<p_1$,
\begin{eqnarray}
	\label{l1: eq3-2}
	&t_{h,p}<T_0<2^{\frac1\beta}-\mu
	\qquad\text{for all }h\in\mathbb R^N,\nonumber\\
	&\min\limits_{|h|=R}g(w(x-h))\cdot h>\frac12 C_7R,\nonumber\\
	&\min\limits_{|h|=R}t_{h,p}>\frac12,\nonumber\\
	&\max\limits_{|h|=R}I_p(t_{h,p}w(x-h))<	\liminf\limits_{p\to2^+}b_p-\mu.
\end{eqnarray}

\begin{proposition}\label{l1: sec3-6}
	\it For each $p\in(2,p_1)$, give a new min-max value $\bar c_p$
	which is defined as
	\[
	\bar c_p:=
	\inf_{\zeta\in\Theta_p}
	\max_{h\in\overline{B_R(0)}}I_p(\zeta(h)),
	\]
	where
	\[
	\Theta_p:=
	\left\{
	\zeta\in C(\overline{B_R(0)},\mathcal N_p)
	\mid
	\zeta(h)=t_{h,p}w(x-h)\ \text{if }|h|=R
	\right\}.
	\]
	Then, in virtue of $(AC_3)$, there exists
	\begin{eqnarray}
		\label{l1: eq3-3}
		c<\liminf_{p\to2^+}b_p
		\leq\liminf_{p\to2^+}\bar c_p
		\leq\limsup_{p\to2^+}\bar c_p
		<\Lambda_0c<2c.
	\end{eqnarray}
\end{proposition}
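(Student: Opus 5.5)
The chain \eqref{l1: eq3-3} has four links. The first, $c<\liminf b_p$, is \Cref{l1: sec3-4}. The last, $\Lambda_0 c<2c$, is immediate from $(AC_3)$ since $c>0$. So the work is in the middle two inequalities, and throughout we are in the standing case $c=c_\infty$.

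\emph{Lower bound $b_p\le \bar c_p$.} This is an intersection (degree) argument. Fix $\zeta\in\Theta_p$ and consider $g\circ\zeta:\overline{B_R(0)}\to\R^N$. On $|h|=R$ we have $\zeta(h)=t_{h,p}w(x-h)$. Then $g(\zeta(h))=t_{h,p}^2\,g(w(x-h))$, and by \eqref{l1: eq3-2},
\[
g(\zeta(h))\cdot h\ge \tfrac14\cdot\tfrac12 C_7R>0 .
\]
Hence the homotopy $(1-s)g(\zeta(h))+sh$ does not vanish on $\partial B_R$. The Brouwer degree of $g\circ\zeta$ at $0$ therefore equals $\deg(\mathrm{id},B_R,0)=1$, so there is $h_0$ with $g(\zeta(h_0))=0$. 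Thus $\zeta(h_0)\in\mathcal M_p$, which gives $\max I_p(\zeta)\ge b_p$. Taking the infimum over $\zeta$ yields $\bar c_p\ge b_p$, and hence $\liminf b_p\le\liminf\bar c_p$. Continuity of $g$ on $H^1\setminus\{0\}$ is clear; $0\notin\mathcal N_p$ is also needed, and this holds because $c_p>0$.

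\emph{Upper bound $\limsup\bar c_p<\Lambda_0 c$.} Take the explicit path $\zeta_0(h):=t_{h,p}w(x-h)$ for all $h\in\overline{B_R}$. It lies in $\Theta_p$, because $t_{h,p}$ depends continuously on $h$ by uniqueness and strict monotonicity of $B_p$. So
\[
\bar c_p\le\sup_{h}I_p(t_{h,p}w(\cdot-h)),
\]
and it suffices to bound $I_p(tw(\cdot-h))$ uniformly for $t\le T_0<2^{1/\beta}$. Write $I_p(u)=\frac12\varphi_p(u)+\frac12\int K_p(|u|)$, which follows from \eqref{l1: eq-energy-K}; on $\mathcal N_p$ this is $\frac12\int K_p$. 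Letting $p\to2^+$ with $t$ in the compact range $[t_*,T_0]$ (dominated convergence as in \Cref{l1: sec3-5}), we get
\[
\limsup\bar c_p\le\sup_{t\le T_0}\frac12\int K_0(tw)\,dx .
\]
Now split $K_0(tw)=\frac\beta2t^2w^2+m\gamma\,G(tw)$. The change of variables $\tau\mapsto t\tau$ in $G(tw)=\int_0^{tw}(\ln s)^{m-1}s\,ds$ gives
\[
G(tw)=w^2\int_0^{t}(\ln\tau+\ln w)^{m-1}\tau\,d\tau .
\]
This is exactly the numerator of $C(\beta,m,w)$ with $a=t$. Hence
\[
\int G(tw)\le C(\beta,m,w)\,t\int G(w)\le C(\beta,m,w)\,2^{1/\beta}\int G(w),
\]
and the quadratic part is at most $2^{2/\beta}\frac\beta2\int w^2$. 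Both pieces are therefore bounded by $\Lambda_0$ times the corresponding piece of $\frac12\int K_0(w)=I_\infty(w)=c_\infty=c$, so the supremum is $\le\Lambda_0 c$.

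To make the inequality strict, I would use $t\le T_0<2^{1/\beta}$ together with the slack $\mu$ from \eqref{l1: eq3-2}, which makes the quadratic factor $T_0^2<2^{2/\beta}$. Alternatively, if $C(\beta,m,w)$ is the binding term, I would note that the sup in $C$ is taken up to $2^{1/\beta}$, so evaluating at $T_0$ still leaves a strict gap since $T_0<2^{1/\beta}$.

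\textbf{Main obstacle.} The delicate point is that the energy along the path for $t\neq1$ is not on the Nehari manifold of $I_\infty$. I would work directly with $I_p(t_{h,p}w(\cdot-h))=\frac12\int K_p$, which is valid because $t_{h,p}w(\cdot-h)\in\mathcal N_p$, so the scaling comparison with $K_0(w)$ above is legitimate. I also need uniformity in $h$ (the potential enters only through $t_{h,p}$) and sign control of $G$: it is nonnegative since $m-1$ is even, which makes the ratio bound meaningful.
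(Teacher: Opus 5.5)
Your proposal is correct and follows the paper's proof essentially step by step. The lower bound $b_p\le\bar c_p$ comes from the same Brouwer-degree homotopy on $\partial B_R(0)$, using $t_{h,p}^2>\frac14$ and $g(w(\cdot-h))\cdot h>\frac12C_7R$. The upper bound comes from the same ingredients: the explicit path $t_{h,p}w(\cdot-h)$, the Nehari identity $I_p=\frac12\int K_p$, the change of variables identifying $\int G(tw)$ with the numerator of $C(\beta,m,w)$, and the passage $p\to2^+$.

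There are only two small differences. You bound $t_{h,p}$ by $T_0$, whereas the paper uses $2^{1/\beta}-\mu$; this is cosmetic. For strictness you need both $T_0^2<2^{2/\beta}$ and $C(\beta,m,w)T_0<C(\beta,m,w)2^{1/\beta}$ simultaneously, not ``alternatively''. That is exactly how the paper concludes $\max\{\cdot,\cdot\}<\Lambda_0$.
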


\noindent
{\bf Proof.}
According to \Cref{l1: sec3-4} and the definition of upper bound and lower bound, it is evident that the first and third inequalities in equation \eqref{l1: eq3-3} hold. So, we just need to prove the remaining inequalities.

Now, we prove the second inequality, that is, $b_p\leq\bar c_p$ for all $p\in(2,p_1)$. In virtue of the definition of these parameters, we know that to prove this conclusion, we only need to show that
\[
(g\circ\zeta)^{-1}(0)\neq\emptyset,\qquad \forall\,\zeta\in\Theta_p.
\]
Set a function which is defined as
\[
H(s,\cdot):=s(g\circ\zeta)+(1-s){\rm id},
\]
where $s\in[0,1]$ and ${\rm id}$ is the identity operator on $\R^N$. In virtue of \eqref{l1: eq3-2}, we know that for all
$h\in\partial B_R(0)$ and $s\in[0,1]$, there holds
\begin{eqnarray*}
	H(s,h)\cdot h
	&=&
	s g(t_{h,p}w(x-h))\cdot h+(1-s)|h|^2\\
	&=&
	s t_{h,p}^2g(w(x-h))\cdot h+(1-s)|h|^2\\
	&\geq&
	\frac18sC_7R+(1-s)R^2\\
	&=&
	R^2+s\left(\frac18C_7R-R^2\right).
\end{eqnarray*}
Denote $F_3(s)=R^2+s\left(\frac{1}{8}C_7R-R^2\right)$. Since $F_3$ is linear and $F_3(0)=R^2>0$, $F_3(1)=\frac18 C_7R>0$, we have $F_3(s)>0$ for all $s\in[0,1]$. So, we have $H(s,h)\cdot h>0$ for all $h\in\partial B_R(0)$ and $s\in[0,1]$. Then, it follows from the homotopy invariance of the Brouwer degree in \cite{Chang2005} that
$$
\deg(g\circ\zeta,B_R(0),0)=\deg(H(1,\cdot),B_R(0),0)=\deg(H(0,\cdot),B_R(0),0)=\deg(\text{id},B_R(0),0)\neq0.
$$
After that, according to the Kronecker existence theorem in \cite{Chang2005}, since $0\notin(g\circ\zeta)(\partial B_R(0))$ and the above degree is nonzero, we obtain $(g\circ\zeta)^{-1}(0)\neq\varnothing$. Hence, there exists $h_\zeta\in B_R(0)$ such that $g(\zeta(h_\zeta))=0$. Since $\zeta(h_\zeta)\in\mathcal N_p$, we have $\zeta(h_\zeta)\in\mathcal M_p$. Therefore,
\[
b_p
\leq I_p(\zeta(h_\zeta))
\leq
\max_{h\in\overline{B_R(0)}}I_p(\zeta(h)).
\]
Taking the infimum over $\zeta\in\Theta_p$, we obtain
\[
b_p\leq\bar c_p.
\]
So, the inequality $\liminf\limits_{p\to2^+}b_p\leq\liminf\limits_{p\to2^+}\bar c_p$ has been proved.

Next, we prove the last inequality. By translation invariance, the parameter defined in $(AC_3)$ can equivalently be written as
$$
C(\beta,m,w):=\sup_{\substack{h\in \overline{B_R(0)}\\0<a\leq 2^{\frac1\beta}}}
\frac{\displaystyle\int_{\R^N}\int_0^a
	\left(\ln|\tau|+\ln|w(x-h)|\right)^{m-1}w^2(x-h)\tau\,d\tau dx}
{\displaystyle a\int_{\R^N}\int_0^1
	\left(\ln|\tau|+\ln|w(x-h)|\right)^{m-1}w^2(x-h)\tau\,d\tau dx}.
$$
Since $m-1$ is even, $w\geq0$ and $w\not\equiv0$, the integrand, understood by the continuous-extension convention in $(AC_3)$ at the zeros of $w$, is nonnegative and not identically zero. Hence, the denominator is strictly positive and the ratio function is well defined.
Denote
\[
I(a,h):=\int_{\R^N}\int_0^a(\ln|\tau|+\ln|w(x-h)|)^{m-1}w^2(x-h)\tau\,d\tau dx.
\]
After the change of variables $\tau=as$, we have
\begin{eqnarray*}
	I(a,h)
	& = &a^2\int_{\mathbb R^N}\int_0^1(\ln |a|+\ln |s|+\ln |w(x-h)|)^{m-1}w^2(x-h)s\,ds\,dx\\
	&\le&a^2\int_{\mathbb R^N}\int_0^1C_{m-1}\left(\Big|\ln |a|\Big|^{m-1}+\Big|\ln |s|\Big|^{m-1}+\Big|\ln |w(x-h)|\Big|^{m-1}\right)w^2(x-h)s\,ds\,dx\\
	& = & a^2\Big|\ln|a|\Big|^{m-1}C_{m-1}\int_{\mathbb{R}^N}w^2(x-h)dx
	+a^2C_{m-1}\int_{\mathbb{R}^N}w^2(x-h)dx\int_0^1s\Big|\ln|s|\Big|^{m-1}ds\\
	&   & +\frac{a^2}{2}C_{m-1}\int_{\mathbb{R}^N}w^2(x-h)\Big|\ln |w(x-h)|\Big|^{m-1}dx.
\end{eqnarray*}
According to \Cref{l1: sec2-5}, \eqref{l1: eq2-1}, and \Cref{l1: sec3-3}, we obtain $I(a,h)=O(a^2|\ln a|^{m-1})$, uniformly for $h\in B_R(0)$ as $a\to0^+$. Hence, the function defined by the ratio $\frac{I(a,h)}{aI(1,h)}$ in the definition of $C(\beta,m,w)$ admits a continuous extension at $a=0$ by setting its value to be zero. Since it is continuous on the compact set $[0,2^{\frac1\beta}]\times\overline{B_R(0)}$, we have $C(\beta,m,w)<+\infty$.

Now choose the path
\[
\zeta(h)=t_{h,p}w(x-h),\qquad h\in\overline{B_R(0)}.
\]
Since the Nehari equation defining $t_{h,p}$ depends continuously on $h$ and is strictly increasing with respect to the scaling parameter, the uniqueness of $t_{h,p}$ implies that $h\mapsto t_{h,p}$ is continuous. Hence, $\zeta\in\Theta_p$. By the definition of $\bar c_p$, we have
\[
\bar c_p\leq\max_{h\in\overline{B_R(0)}}I_p(t_{h,p}w(x-h)).
\]
It follows from $t_{h,p}w(x-h)\in\mathcal N_p$ that
\begin{small}
	\begin{eqnarray*}
		I_p(t_{h,p}w(x-h))
		&=&
		I_p(t_{h,p}w(x-h))
		-\frac12
		\left\langle
		I'_p(t_{h,p}w(x-h)),
		t_{h,p}w(x-h)
		\right\rangle\\
		&=&
		\frac{\beta}{2p}
		\int_{\R^N}|t_{h,p}w(x-h)|^pdx
		+
		\frac{m\gamma}{2}
		\int_{\R^N}\int_0^{t_{h,p}}
		\left(
		\frac{|\tau w(x-h)|^{p-2}-1}{p-2}
		\right)^{m-1}
		|\tau w(x-h)|^{p-2}
		w^2(x-h)\tau\,d\tau dx.
	\end{eqnarray*}
\end{small}Based on \Cref{l1: sec3-5} and the uniform estimates in Section 3, the following convergence is uniform with respect to $h\in\overline{B_R(0)}$ as $p\to2^+$:
\begin{small}
	\begin{eqnarray*}
		&&\max_{h\in\overline{B_R(0)}}
		I_p(t_{h,p}w(x-h))\\
		&=&\max_{h\in\overline{B_R(0)}}
		\left\{
		\frac{\beta t_{h,p}^2}{4}
		\int_{\R^N}|w(x-h)|^2dx	+\frac{m\gamma}{2}
		\int_{\R^N}\int_0^{t_{h,p}}
		(\ln|\tau w(x-h)|)^{m-1}
		\tau|w(x-h)|^2\,d\tau dx
		\right\}
		+o(1),
	\end{eqnarray*}
\end{small}where $o(1)\to0$ as $p\to2^+$ uniformly with respect to $h\in\overline{B_R(0)}$. In virtue of \eqref{l1: eq3-2}, we have $t_{h,p}<2^{\frac1\beta}-\mu$. Since $m-1$ is even, the integrand in the second term is nonnegative. Therefore,
\begin{eqnarray*}
\max_{h\in\overline{B_R(0)}}I_p(t_{h,p}w(x-h))
	&\leq&
	\max_{h\in\overline{B_R(0)}}\left\{\frac{\beta(2^{\frac1\beta}-\mu)^2}{4}\int_{\R^N}|w(x-h)|^2dx\right.\\
	&&\left.+\frac{m\gamma}{2}\int_{\R^N}\int_0^{2^{\frac1\beta}-\mu}(\ln|\tau w(x-h)|)^{m-1}\tau|w(x-h)|^2\,d\tau dx\right\}
	+o(1).
\end{eqnarray*}
According to the definition of $C(\beta,m,w)$, we have
\begin{small}
	\begin{eqnarray*}
	\int_{\R^N}\int_0^{2^{\frac1\beta}-\mu}	(\ln|\tau w(x-h)|)^{m-1}\tau|w(x-h)|^2\,d\tau dx
	\leq C(\beta,m,w)\left(2^{\frac1\beta}-\mu\right)\int_{\R^N}\int_0^1(\ln|\tau w(x-h)|)^{m-1}\tau|w(x-h)|^2\,d\tau dx.
	\end{eqnarray*}
\end{small}
Consequently,
\begin{small}
	\begin{eqnarray*}
		\max_{h\in\overline{B_R(0)}}I_p(t_{h,p}w(x-h))
		&\leq&
		\max_{h\in\overline{B_R(0)}}\left\{\frac{\beta(2^{\frac1\beta}-\mu)^2}{4}\int_{\R^N}|w(x-h)|^2dx\right.\\
		&&\left.+C(\beta,m,w)\left(2^{\frac1\beta}-\mu\right)\frac{m\gamma}{2}\int_{\R^N}\int_0^1(\ln|\tau w(x-h)|)^{m-1}		\tau|w(x-h)|^2\,d\tau dx\right\}+o(1)\\
		&\leq&
		\max\left\{	\left(2^{\frac1\beta}-\mu\right)^2,\,C(\beta,m,w)\left(2^{\frac1\beta}-\mu\right)\right\}\\
		&&\times\max_{h\in\overline{B_R(0)}}\left\{\frac{\beta}{4}\int_{\R^N}|w(x-h)|^2dx
		+\frac{m\gamma}{2}	\int_{\R^N}\int_0^1(\ln|\tau w(x-h)|)^{m-1}\tau|w(x-h)|^2\,d\tau dx\right\}+o(1).
	\end{eqnarray*}
\end{small}
Since $w$ is a ground state solution of the autonomous problem \eqref{l1: eq2-3}, by the corresponding Nehari energy identity,
\[
\frac{\beta}{4}\int_{\R^N}|w(x-h)|^2dx+\frac{m\gamma}{2}\int_{\R^N}\int_0^1(\ln|\tau w(x-h)|)^{m-1}\tau|w(x-h)|^2\,d\tau dx
=I_\infty(w)=c_\infty.
\]
Hence,
\begin{eqnarray*}
	\limsup_{p\to2^+}\bar c_p
	\leq\max\left\{\left(2^{\frac1\beta}-\mu\right)^2,\,C(\beta,m,w)\left(2^{\frac1\beta}-\mu\right)\right\}c_\infty
	<\max\left\{2^{\frac2\beta},\,C(\beta,m,w)2^{\frac1\beta}\right\}c_\infty
	=\Lambda_0c_\infty.
\end{eqnarray*}
Since we are considering the non-attainment case, by \Cref{l1: sec3-1}, we have $c=c_\infty$. Therefore, in virtue of $(AC_3)$,
\[
\limsup_{p\to2^+}\bar c_p<\Lambda_0c<2c.
\]
Thus, the last inequality in \eqref{l1: eq3-3} has been proved. Then, the proof is completed.\qed

\subsection{Proof of the main theorem}
\noindent
In this subsection, we complete the proof of the main theorem. Based on the min-max level constructed above, we obtain a Palais-Smale sequence for the approximating functional. The compactness results established in the previous sections allow us to show the convergence of this sequence and obtain a nontrivial critical point. Finally, passing to the limit $p\to2^+$, we recover a nontrivial nonnegative bound state of the original logarithmic Schr\"odinger equation.

By \Cref{l1: sec3-6}, choose $p_n\to2^+$ such that
\begin{eqnarray*}
\label{l1: eq3-5}
\bar c_{p_n}\to\bar c\in(c,2c).
\end{eqnarray*}
Moreover, by \eqref{l1: eq3-2} and \Cref{l1: sec3-6}, for sufficiently
large $n$, we have
\begin{eqnarray}
\label{l1: eq3-6}
\max_{|h|=R}I_{p_n}(t_{h,p_n}w(x-h))<\bar c-\mu.
\end{eqnarray}
\begin{proposition}\label{l1: sec3-7}
	\it
Let $p_n\to2^+$ be such that $\bar c_{p_n}\to\bar c\in(c,2c)$. Then there exists $u_n\in\mathcal N_{p_n}$ such that
$$
I_{p_n}(u_n)\to\bar c,\quad \big\|I'_{p_n}|_{\mathcal N_{p_n}}(u_n)\big\|_*\to0.
$$
\end{proposition}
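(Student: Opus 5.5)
The plan is to apply a quantitative deformation lemma on the $C^1$ Nehari manifold $\mathcal N_{p_n}$, combined with the boundary gap \eqref{l1: eq3-6}. By the computation at the end of Section 2.2, $\langle\varphi_{p}'(u),u\rangle<0$ on $\mathcal N_p$. Hence $\mathcal N_p$ is a $C^1$ submanifold of codimension one, and $\|I_p'|_{\mathcal N_p}(\cdot)\|_*$ is the norm of a genuine tangential gradient. It therefore suffices to show, for each fixed $n$ large, that there is $u\in\mathcal N_{p_n}$ with
\[
|I_{p_n}(u)-\bar c_{p_n}|<\varepsilon_n,\qquad \|I'_{p_n}|_{\mathcal N_{p_n}}(u)\|_*<\varepsilon_n,
\]
for a sequence $\varepsilon_n\to0$. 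A diagonal argument then gives the required $u_n$, since $\bar c_{p_n}\to\bar c$.

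Fix $n$ and argue by contradiction. Suppose that for some $\varepsilon\in(0,\mu/2)$ we have $\|I'_{p_n}|_{\mathcal N_{p_n}}(u)\|_*\ge\varepsilon$ for every $u\in\mathcal N_{p_n}$ with $|I_{p_n}(u)-\bar c_{p_n}|\le 2\varepsilon$. We then build a pseudo-gradient vector field for $I_{p_n}|_{\mathcal N_{p_n}}$ that is tangent to $\mathcal N_{p_n}$, locally Lipschitz, and cut off outside the strip $\{|I_{p_n}-\bar c_{p_n}|\le2\varepsilon\}$. Its flow $\eta$ preserves $\mathcal N_{p_n}$ and decreases $I_{p_n}$. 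By the standard deformation lemma on Finsler/Hilbert manifolds (Willem, Struwe), there is a deformation $\eta\in C(\mathcal N_{p_n},\mathcal N_{p_n})$ with the following properties:
\begin{itemize}
\item[(a)] $\eta(u)=u$ whenever $|I_{p_n}(u)-\bar c_{p_n}|\ge2\varepsilon$;
\item[(b)] $I_{p_n}(\eta(u))\le\bar c_{p_n}-\varepsilon$ whenever $I_{p_n}(u)\le\bar c_{p_n}+\varepsilon$.
\end{itemize}

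Now choose $\zeta\in\Theta_{p_n}$ with $\max_{\overline{B_R(0)}}I_{p_n}\circ\zeta\le\bar c_{p_n}+\varepsilon$. On $\partial B_R(0)$, \eqref{l1: eq3-6} and $\bar c_{p_n}\to\bar c$ give $I_{p_n}(\zeta(h))<\bar c-\mu<\bar c_{p_n}-2\varepsilon$ for large $n$. By (a), $\eta\circ\zeta=\zeta$ on $\partial B_R(0)$, so $\eta\circ\zeta\in\Theta_{p_n}$. By (b), $\max I_{p_n}(\eta\circ\zeta)\le\bar c_{p_n}-\varepsilon$, which contradicts the definition of $\bar c_{p_n}$. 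This yields the almost-critical points. Alternatively, one can apply Ekeland's principle on the complete metric space $\Theta_{p_n}$ with the sup metric; this is also complete, because the boundary condition is closed and $\mathcal N_{p_n}$ is closed in $H^1$ since $0\notin\overline{\mathcal N_{p_n}}$ by the uniform lower bound from Step 3 of \Cref{l1: sec2-6}.

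The main obstacle is constructing the deformation so that it stays inside $\mathcal N_{p_n}$. I would first try the projected gradient
\[
I_{p_n}'(u)-\lambda(u)\varphi_{p_n}'(u),\qquad \lambda(u)=\frac{\langle I_{p_n}'(u),u\rangle\ \text{-type ratio}}{\langle\varphi_{p_n}'(u),u\rangle},
\]
which is only continuous. I would regularize it with a pseudo-gradient and then correct it by the Nehari projection $u\mapsto t(u)u$. The ray map $u\mapsto t(u)u$ is continuous by the uniqueness and strict monotonicity in Section 2.2, and it is $C^1$ by the implicit function theorem because $\frac{d}{dt}\varphi_{p_n}(tu)/t^2<0$. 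Composing a deformation in $H^1\setminus\{0\}$ with this projection keeps paths in $\mathcal N_{p_n}$. Since $I_{p_n}(t(u)u)=\max_{t>0}I_{p_n}(tu)$, the energy decrease is preserved to first order near $\mathcal N_{p_n}$. I would carry this out via the equivalence $c_p=\inf_{u\neq0}\max_t I_p(tu)$ and the fact that $\varphi_{p_n}'\neq0$ on $\mathcal N_{p_n}$.
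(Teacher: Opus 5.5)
Your argument is correct and is essentially the paper's proof: the paper also applies the quantitative deformation lemma from Willem's book on $\mathcal N_{p_n}$. It then uses the boundary gap \eqref{l1: eq3-6} to get $\eta_n\circ\zeta_n\in\Theta_{p_n}$ and contradicts the definition of $\bar c_{p_n}$. The only differences are cosmetic: the paper centers the energy strip at $\bar c$ with $\varepsilon_n=|\bar c_{p_n}-\bar c|+n^{-1}$ rather than at $\bar c_{p_n}$, and it simply cites the deformation lemma on the Nehari manifold instead of sketching how to build it.
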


\noindent
{\bf Proof.}
Let $\varepsilon_n=|\bar c_{p_n}-\bar c|+n^{-1}$. By the definition of \(\bar c_{p_n}\), there exists $\zeta_n\in\Theta_{p_n}$ such that
\[
\max_{h\in \overline{B_R(0)}}
I_{p_n}(\zeta_n(h))
<
\bar c+\varepsilon_n .
\]

Suppose by contradiction that this claim is false. Then, by the quantitative deformation lemma \cite{Minimax}, there exists a deformation $\eta_n\in C(\mathcal N_{p_n},\mathcal N_{p_n})$ such that
\begin{eqnarray}
	\label{l1: eq3-7}
	\eta_{n}(u)=u,&\text{if}&I_{p_n}(u)<\bar{c}-2\varepsilon_n,
\end{eqnarray}
and
\begin{eqnarray}
	\label{l1: eq3-8}
	I_{p_n}(\eta_{n}(u))\leq\bar{c}-\varepsilon_n&\text{if}&I_{p_n}(u)\leq\bar{c}+\varepsilon_n.
\end{eqnarray}
Since $\varepsilon_n\to0$, for sufficiently large $n$, we have
$2\varepsilon_n<\mu$. Thus, it follows from \eqref{l1: eq3-6} that
\[
\max_{|h|=R}I_{p_n}(t_{h,p_n}w(x-h))
<
\bar c-\mu
<
\bar c-2\varepsilon_n.
\]
Therefore, by \eqref{l1: eq3-7},
\[
\eta_n\circ\zeta_n\in\Theta_{p_n}.
\] So, we can obtain that $\max\limits_{z\in \overline{B_R(0)}}I_{p_n}(\eta_n\circ\zeta_n(z))\geq\bar{c}_{p_n}>\bar{c}-\varepsilon_n$, which contradicts (\ref{l1: eq3-8}). Therefore, for every sufficiently large $n$, there exists $u_n\in\mathcal N_{p_n}$ such that
\[
I_{p_n}(u_n)\in
[\bar c-2\varepsilon_n,\bar c+2\varepsilon_n]
\]
and
\[
\big\|I'_{p_n}|_{\mathcal N_{p_n}}(u_n)\big\|_*<8\varepsilon_n.
\]
Since $\varepsilon_n\to0$, the desired conclusion follows. Then, the proof is completed.\qed

\vskip2mm
\noindent

\noindent
{\bf Proof of \Cref{l1: sec1-1}.}
By \Cref{l1: sec3-6,l1: sec3-7}, there exist $p_n\to2^+$ and $u_n\in\mathcal N_{p_n}$ such that
\[
I_{p_n}(u_n)\to\bar c\in(c,2c),
\qquad
\big\|I'_{p_n}|_{\mathcal N_{p_n}}(u_n)\big\|_*\to0.
\]
Since in the non-attainment case $c=c_\infty$, we have
\[
\bar c\in(c_\infty,c_\infty+c).
\]
Hence, by \Cref{l1: sec2-8},
\[
u_n\to u
\qquad\text{strongly in }H^1(\R^N),
\]
up to a subsequence. Moreover, by \Cref{l1: sec2-7}, $u$ is a nontrivial critical point of $I$ and
\[
I(u)=\bar c\in(c,2c).
\]
It remains to prove that $u$ does not change sign. Assume by contradiction that $u$ changes sign. Then $u^+\neq0$ and $u^-\neq0$. By the same cutoff and self-testing argument as that used in \Cref{l1: sec2-7}, we may test the equation with $u^+$ and $-u^-$. Hence, $\langle I'(u),u^+\rangle=0, \langle I'(u),-u^-\rangle=0$, which implies that $u^+,u^-\in\mathcal N$. Moreover, $I(u)=I(u^+)+I(u^-)\ge2c$, which contradicts $I(u)=\bar c<2c$. Hence $u$ does not change sign. Replacing $u$ by $-u$ if necessary, we may assume that $u\geq0$. Since $u\not\equiv0$, it is a nontrivial nonnegative bound state solution. In the special case $m=1$, the strong maximum principle \cite{Pucci1999,Pucci2007} further yields $u>0$ in $\R^N$. The proof is completed.\qed

\vskip3mm
\noindent{\bf Acknowledgements} This project is  supported by China Association for Science and Technology Youth Science and Technology Talent Cultivation Project 2025 Doctoral Student Special Program and Yunnan Fundamental Research Projects (grant No: 202301AT070465).

\vskip3mm
\noindent{\bf Data availability statement} Data sharing not applicable to this article as no datasets were generated or analyzed during the current study.
\vskip2mm
\section*{Declarations}
\noindent{\bf Conflict of interest} The authors state no conflict of interest.

\vskip2mm
\renewcommand\refname{References}

\end{document}